\definecolor{blue(munsell)}{rgb}{0.0, 0.5, 0.69}
\DeclareFontFamily{U}{min}{}
\DeclareFontShape{U}{min}{m}{n}{<-> udmj30}{}
\newcommand{\yo}{\!\text{\usefont{U}{min}{m}{n}\symbol{'210}}\!}
\theoremstyle{definition}
\newtheorem{thm}{Theorem}[subsection]
\newtheorem*{thm*}{Theorem}
\newtheorem{prop}[thm]{Proposition}
\newtheorem*{prop*}{Proposition}
\newtheorem{lem}[thm]{Lemma}
\newtheorem{defn}[thm]{Definition}
\newtheorem{war}[thm]{Warning}
\newtheorem*{war*}{Warning}
\newtheorem{rem}[thm]{Remark}
\newtheorem{exa}[thm]{Example}
\newcommand{\textdel}[1]{}
\newcommand{\Hcal}{\mathcal{H}}
\newcommand{\Set}{\mathsf{Set}}
\newcommand{\Fin}{\mathsf{Fin}}
\newcommand{\op}{{}^{\mathrm{op}}}
\DeclareMathOperator{\id}{id}
\newcommand{\mc}[1]{\mathcal{#1}}
\newcommand{\mb}[1]{\mathbf{#1}}
\newcommand{\mbb}[1]{\mathbb{#1}}
\newcommand{\mr}[1]{\mathrm{#1}}
\newcommand{\ms}[1]{\mathsf{#1}}
\newcommand{\lan}{\ms{lan}}
\newcommand{\ran}{\ms{ran}}
\newcommand{\nt}{\Rightarrow}
\newcommand{\WRInj}{\ms{WRInj}}
\newcommand{\Topoi}{\ms{Topoi}}
\newcommand{\Lex}{\ms{Lex}}
\newcommand{\lex}{\ms{lex}}
\newcommand{\Cat}{\ms{Cat}}
\newcommand{\psh}{\ms{Psh}}
\newcommand{\alg}{\ms{alg}}
\newcommand{\surj}{\twoheadrightarrow}
\newcommand{\inj}{\rightarrowtail}
\newcommand{\hook}{\hookrightarrow}
\newcommand{\ov}[1]{{\overline{#1}}}
\newcommand{\qsi}[1]{\widetilde{#1}}
\newcommand{\pair}[1]{\left\langle#1\right\rangle}
\newcommand{\inv}{^{-1}}
\newcommand{\dv}{\uparrow}
\newcommand{\set}[1]{\{#1\}}
\newcommand{\Cl}{\ms{Cl}}
\newcommand{\sub}{\mr{Sub}}
\newcommand{\N}{\mbb N}
\newcommand{\Frm}{\mb{Frm}}
\newcommand{\DL}{\mb{DL}}
\newcommand{\rg}[1]{\mc O_{#1}}
\newcommand{\elem}{\int\!\!}
\newcommand{\syn}{\ms{Syn}}
\newcommand{\ct@}[2]{%
  \vtop{\m@th\ialign{##\cr
    \hfil$#1\operator@font lim$\hfil\cr
    \noalign{\nointerlineskip\kern1.5\ex@}#2\cr
    \noalign{\nointerlineskip\kern-\ex@}\cr}}%
}
\newcommand{\ct}{%
  \mathop{\mathpalette\ct@{\rightarrowfill@\textstyle}}\nmlimits@
}
\newcommand{\lt@}[2]{%
  \vtop{\m@th\ialign{##\cr
    \hfil$#1\operator@font lim$\hfil\cr
    \noalign{\nointerlineskip\kern1.5\ex@}#2\cr
    \noalign{\nointerlineskip\kern-\ex@}\cr}}%
}
\newcommand{\lt}{%
  \mathop{\mathpalette\lt@{\leftarrowfill@\textstyle}}\nmlimits@
}
\title{Craig Interpolation for subgeometric logics}
\author{Ivan Di Liberti$^{\mathbin{\rotatebox[origin=c]{-45}{$\diamond$}}}$}
\thanks{
$^{{\mathbin{\rotatebox[origin=c]{-45}{$\diamond$}}}}$ Department of Philosophy, Linguistics and Theory of Science, University of Gothenburg, Gothenburg, Sweden. \hfill \href{mailto:diliberti.math@gmail.com}{\sf diliberti.math@gmail.com}
}
\author{Lingyuan Ye$^\diamond$}
\thanks{
$^\diamond$ Department of Computer Science and Technology, University of Cambridge, Cambridge, UK. (Corresponding author). \hfill  \href{mailto:ye.lingyuan.ac@gmail.com}{\sf ye.lingyuan.ac@gmail.com}
}
\begin{document}

\begin{abstract}
We show that a vast class of finitary fragments of geometric logic admit a form of Craig interpolation property. In doing so, we provide a new dictionary to import technology from algebraic logic to categorical logic.
 
  \smallskip \noindent \textbf{Keywords.} 
fragment of geometric logic, interpolation, finitary, categorical logic, doctrine,  topos,  coherent topos, Kan injectivity, lax-idempotent pseudomonad, KZ-doctrine \textbf{MSC2020.}  03G15, 03G27, 03C40, 03B10, 03G30, 18B25, 18C10, 18A15.
\end{abstract}

\maketitle

   {
   \hypersetup{linkcolor=black}
   \tableofcontents
   }

\section*{Introduction}

In 1957 Craig presented his interpolation theorem \cite{craig1957three,craig1957linear}. Roughly speaking, he proved that given a provable propositional sequent $\varphi(\ov p,\ov r) \vdash \psi(\ov p,\ov s)$, where the appearing formulas share a common language, in the propositional case a list of variables $\ov p$, there exists an \textit{interpolant} formula $\chi(\ov p)$,  mentioning only the common language, such that 
\[ \varphi(\ov p,\ov r) \vdash \chi(\ov p), \quad \chi(\ov p) \vdash \psi(\ov p,\ov s). \]
This result proved to be extremely influential in several areas of logic, including those of more computational value like \textit{model checking} \cite{mcmillan2005applications}, and those of more theoretical ambition. Indeed, Craig interpolation for propositional first order logic implies Beth definability theorem \cite{gabbay2010interpolation}.

Because of its versatility and its applications, Craig-interpolation-type of results became a whole research theme. Only two years after Craig's seminal contribution, Lyndon improved the theorem with applications to properties preserved under homomorphisms \cite{lyndon1959interpolation,lyndon1959properties}. In 1962 Shütte \cite{schutte1962interpolationssatz} extended the theorem to \textit{intuitionistic} logic with a proof theoretic technique, importing some previous ideas due of Maehara. Since the 1960s, this research topic of research has branched in many different directions, far too many to be surveyed here, we shall refer the interested reader to the forthcoming book \cite{ten2026theory} focusing on this very topic (see also \cite{vaananen2008craig}).

Among the most prominent variations of Craig's theorem, we find the work of Pitts \cite{pitts1983amalgamation,Pitts1983AnAO, pitts2020interpolation}, who delivered an algebraically flavoured version of Craig interpolation for propositional and first-order \textit{intuitionistic} logic, and for the coherent fragment of first-order logic as well, from a categorical logic perspective. The definition of interpolation in categorical logic terms is a generalisation of the original interpolation in its syntactic form as displayed earlier, and also of the one used in algebraic logic (cf.~\cite{hoogland2001definability}) which primarily applies to propositional fragments. Pitts' research line imports techniques coming from topos theory, especially various powerful descent-type theorems. In the attempt of framing Pitt's theorem, other perspectives have emerged on the topic since then, including \cite{galli2003strong,makkai1995gabbay,vcubric1993results,pavlovi1991categorical}. 

\textbf{The aim of this paper} is to explore a cluster of fragments of \textit{geometric logic} and assess to what extent these fragments admit a form of Craig interpolation theorem. Geometric logic is a prominent area of (categorical) logic, and includes the coherent fragment, for which Pitts proved Craig interpolation. 

There are several different motivations for this work. On the side of applications, it generalises Pitts' original contributions to other fragments of logic, now including -- for example-- regular logic. On a more speculative side, it expands our understanding of the geography of fragments of geometric logic. We will discuss later that this investigation has led to new ideas. Finally, we see this work as the first one in bridging the wisdom of algebraic logic to the technologies of categorical logic, expanding many classical results holding in propositional logic to the predicate case.

The main difference between both the aim and technique of this paper and other interpolation-type theorem is that, instead of trying to prove interpolation for \emph{specific} fragment of logic, we establish \emph{uniformly} a Craig-interpolation-type theorem for a wide class of subfragments of geometric logic. In particular, the techniques used in aforementioned Pitts' proof, though powerful, are highly tailored towards the coherent and intuitionistic logic, and cannot be directly applied to other fragments. 

It goes without saying that in order to accomplish our goal, we need a working notion of \textit{fragment} of geometric logic. The existence of such framework, where one can study logics and its fragments, is part of a broad research programme whose aim is to study \textit{logics} and their interaction started by the first author in recent times. A germinal version of this idea goes back to Power \cite{power1995tricategories}, and is starting to surface in recent years \cite{di2025bi, coraglia2021context, di2024sketches, osser2025a2sketchy}. 
For our purposes, though, such framing is too ambitious, and we can be content with carving fragments \textit{inside} geometric logic. In \cite{di2025logic}, the authors introduce the notion of \textit{fragment} of geometric logic which is perfectly suitable for our purposes. Our main theorem will be the following.

\begin{thm*}[\ref{interpolationofsubfragment}]
Let $\mathcal{H}$ be a fragment of  geometric logic between the regular and coherent fragment having an \'etale classifier. Then $\mc H$ has the interpolation property.
\end{thm*}

We will see in the following discussion that this is far from being the only interesting result of this paper, and that several auxiliary notions we shall develop may turn out to be even more interesting than the main result of the paper. 

\subsection*{Contributions and structure of the paper}
In \Cref{sec:prom}, following the generalisation of interpolation presented in~\cite{pitts1983amalgamation,pitts2020interpolation} from the perspective of categorical logic, we introduce the notion of interpolation we are considering for general \emph{doctrines} (a.k.a lax-idempotent pseudo-monads) on the 2-category of left exact categories $\Lex$. We also explain its relationship with the more syntactic notion of interpolation.

In our investigation, we have identified a property of logic that plays a key role in establishing interpolation results. In the language of doctrine, the property states that it should \emph{preserve slicing}. This is indeed such a fundamental property of doctrines associated with logic and type theory, that perhaps has not been paid enough attention to in the literature. In \Cref{sec:slicing}, we initiated a first step of studying the operation of slicing for doctrines over lex categories. One point worth mentioning is that, the slicing operation in this context, unlike that in $\Cat$, is a \emph{colimit} rather than a \emph{limit}. In particular, taking the slice of a syntactic category of a theory corresponds syntactically to adding a constant, thus it has a mapping-out universal property.

As a first application, in \Cref{sec:slicing} we prove the following propositional-bootstrap result, that relates the interpolation property of first-order doctrines with interpolation for certain corresponding lattice structures:

\begin{thm*}[\ref{reducetosub1}]
    Let $\ms T$ be a doctrine on lex categories which preserves slicing. Then it has the interpolation property iff for any cocomma square in $\alg(\ms T)$, the image under the 2-functor $\sub_{-}(1)$ has interpolation in the sense of~\cref{interpos}.
\end{thm*}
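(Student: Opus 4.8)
The plan is to prove the two implications separately; the forward one is a direct specialisation, while the backward one --- the actual bootstrap --- is where \emph{preservation of slicing} carries the argument. Fix a cocomma square $\mathcal{S}$ in $\alg(\ms{T})$, presented by $f\colon C\to A$ and $g\colon C\to B$ with cocomma vertex $P$, coprojections $i\colon A\to P$ and $j\colon B\to P$, and comparison $2$-cell $\lambda\colon if\Rightarrow jg$. Unwinding definitions, the interpolation property of $\ms{T}$ \emph{at} $\mathcal{S}$ says: for every object $X$ of $C$ and all $a\in\sub_A(fX)$ and $b\in\sub_B(gX)$ with $i(a)\le\lambda_X^{*}j(b)$ in $\sub_P(ifX)$, there is $c\in\sub_C(X)$ with $a\le f(c)$ and $g(c)\le b$; and $\ms{T}$ has the interpolation property iff this holds for every cocomma square. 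Since $f,g,i,j$ are lex and hence preserve the terminal object, applying the $2$-functor $\sub_{-}(1)$ to $\mathcal{S}$ turns precisely the instance $X=1$ of the above into the assertion that $\sub_{-}(1)(\mathcal{S})$ has interpolation in the sense of \cref{interpos}. Hence the forward implication is immediate: restrict the hypothesis to the terminal object.

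For the backward implication, fix a cocomma square $\mathcal{S}$ and interpolation data $(X,a,b)$ over it. The idea is to \emph{slice $\mathcal{S}$ over $X$}: pass to $C/X$, $A/fX$, $B/gX$ and $P/ifX$, let $f,g$ descend to $C/X\to A/fX$ and $C/X\to B/gX$, let $i$ descend to the lex functor $A/fX\to P/ifX$, let $j$ descend to $B/gX\to P/jgX$ composed with the lex pullback functor $\lambda_X^{*}\colon P/jgX\to P/ifX$, and let $\lambda$ descend to a $2$-cell between the two composites $C/X\to P/ifX$. Because $\ms{T}$ preserves slicing, each of these slices is canonically a $\ms{T}$-algebra and each of these functors is a morphism in $\alg(\ms{T})$; because slicing is a colimit construction (a main point of \Cref{sec:slicing}) and cocomma objects are themselves colimits, the sliced square $\mathcal{S}/X$ is again a cocomma square in $\alg(\ms{T})$. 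Applying the hypothesis to $\mathcal{S}/X$ gives that $\sub_{-}(1)(\mathcal{S}/X)$ has interpolation in the sense of \cref{interpos}. Finally, for any $\ms{T}$-algebra $\mathcal{A}$ and object $Y$ the terminal object of $\mathcal{A}/Y$ is $\id_Y$, and there is a canonical isomorphism $\sub_{\mathcal{A}/Y}(1)\cong\sub_{\mathcal{A}}(Y)$ compatible with the lattice structure induced by $\ms{T}$ (again part of preservation of slicing); under these identifications $\sub_{-}(1)(\mathcal{S}/X)$ is exactly the instance of the interpolation property of $\ms{T}$ at $\mathcal{S}$ for the object $X$, so feeding it $(a,b)$ yields the required interpolant $c\in\sub_C(X)$.

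The step I expect to be the main obstacle is the claim that $\mathcal{S}/X$ is again a cocomma square in $\alg(\ms{T})$. Slicing ``over $X$'' is not literally a $2$-functor on $\alg(\ms{T})$, since the object one slices over changes along morphisms, and in a cocomma square the object $X$ acquires \emph{two} images in $P$, related only by the non-invertible $\lambda_X$; organising this cleanly calls for a lax-slice / codomain-fibration formalism, and checking that it carries cocommas to cocommas is precisely where ``$\ms{T}$ preserves slicing'' (so one never leaves $\alg(\ms{T})$) meets the cocontinuity of slicing from \Cref{sec:slicing}. One must also track the variances carefully --- which image of $X$ in $P$ to slice over, and why both induced legs into $P/ifX$ remain lex, which is exactly where the pullback functor $\lambda_X^{*}$ is needed. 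Once $\mathcal{S}/X$ is available as a cocomma square, identifying the subobject lattices and matching them against the condition in \cref{interpos} is purely formal.
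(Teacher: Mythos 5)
Your proposal is correct and follows essentially the same route as the paper: the forward direction is the specialisation $X=1$, and the backward direction slices the cocomma square over $X$ and identifies $\sub_{\mc A/Y}(1)\cong\sub_{\mc A}(Y)$. The one step you flag as the main obstacle --- that the sliced square is again a cocomma --- is exactly what \cref{slicepushout} supplies: for any map $h$ of $\ms T$-algebras the square with legs $\mc A\to\mc B$ and $\mc A/a\to\mc B/ha$ is a 2-pushout in $\alg(\ms T)$, so taking $\mc D/vgX$ as the front-right vertex (reached from $\mc A/fX$ via $u$ followed by pullback along $\alpha_X$) makes the top and bottom faces of the cube pushouts, and the front face is then a cocomma by the pasting lemma, with no need for a lax-slice fibration formalism.
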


\Cref{sec:finitary} proceeds to provide a classification of the interpolation property for doctrines preserving slicing as an \emph{exactness property}. This aligns with the philosophy of algebraic logic, which studies the interpolation property as properties of algebras corresponding to a fragment of propositional logic. In particular, we will introduced a notion of \emph{t-conservative maps} of lex categories (\cref{tcons}), and show that it belongs to an orthogonal factorisation system for any \emph{finitary} doctrine on $\Lex$. Using this, we are able to provide a classification as below:

\begin{thm*}[\ref{tstableinterpolation}]
   Let $\ms T$ be a finitary doctrine on lex categories preserving slicing. It has the interpolation property iff t-conservative maps are closed under cocomma in $\alg(\ms T)$.
\end{thm*}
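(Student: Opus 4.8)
The plan is to read off the statement from the propositional reduction of \cref{reducetosub1} together with the orthogonal factorisation system of \cref{tcons}. Since $\ms T$ preserves slicing, \cref{reducetosub1} already identifies the interpolation property of $\ms T$ with the condition that, for every cocomma square $\mathcal Q$ in $\alg(\ms T)$, the square $\sub_{-}(1)(\mathcal Q)$ of propositional algebras has interpolation in the sense of \cref{interpos}. Everything thus comes down to showing that this condition is equivalent to t-conservative maps being closed under cocomma; and the role of finitariness is precisely to make \cref{tcons} available, i.e.\ to guarantee that t-conservative maps constitute the right class of a functorial factorisation system that we may use inside $\alg(\ms T)$.

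First I would record the propositional shadow of t-conservativity. Using that $\ms T$ preserves slicing --- so that slicing corresponds to adjoining constants --- I would show that a morphism $f\colon A \to B$ of $\ms T$-algebras is t-conservative exactly when $\sub_{f}(1)$, together with each of its restrictions along constants, is an order-embedding of propositional algebras; correspondingly the left class of the factorisation system is carried by $\sub_{-}(1)$ to the maps that are \emph{generating}, i.e.\ surjective after closing the image under the propositional operations of $\ms T$. This is obtained by transporting \cref{tcons} along $\sub_{-}(1)$ and its fully faithful left adjoint (the inclusion of propositional $\ms T$-algebras as posetal $\ms T$-algebras).

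With this dictionary the equivalence is the categorical incarnation of the classical fact that Craig interpolation amounts to conservativity of amalgamated unions of theories (a Robinson-style joint-consistency phenomenon). For the implication from closure to interpolation, I would take an arbitrary cocomma square $\mathcal Q$ on a span $B \xleftarrow{u} A \xrightarrow{v} C$, factor $u = w\circ\ell$ and $v = w'\circ\ell'$ with $\ell,\ell'$ generating and $w,w'$ t-conservative, and use the pasting law for cocomma objects (cocomma squares compose just as pushouts do) to display $\mathcal Q$ as a paste of the cocomma of the generating span $(\ell,\ell')$ with cocommas along $w$ and $w'$. The closure hypothesis makes the legs of the two outer cocommas t-conservative, hence order-embeddings after $\sub_{-}(1)$; the inner cocomma contributes no obstruction because after $\sub_{-}(1)$ its legs are generating, so a would-be interpolant can be chased down to $\sub_{A}(1)$; interpolation for the whole of $\sub_{-}(1)(\mathcal Q)$ is then assembled from the pieces. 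For the reverse implication, given a span $B \xleftarrow{u} A \xrightarrow{v} C$ with $u$ t-conservative and cocomma $D$ with legs $u',v'$, interpolation for $\sub_{-}(1)(\mathcal Q)$ together with the fact that $\sub_{u}(1)$ is an embedding allows one to collapse any inequality in $\sub_{D}(1)$ between images of elements of $\sub_{C}(1)$ back to an inequality already valid in $\sub_{C}(1)$ --- which is precisely the assertion that $\sub_{v'}(1)$ is an embedding --- and running the same argument over all slices upgrades this to t-conservativity of $v'$.

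I expect the first step to be the main obstacle: pinning down t-conservativity on the propositional side, and with it controlling how $\sub_{-}(1)$ transmits the factorisation system and behaves on cocomma squares. A second, genuinely logical, difficulty shows up in the reverse implication, since for fragments as weak as regular logic there is no implication connective and one cannot reduce an inequality in $\sub_{D}(1)$ to a single element being $\top$; the collapsing argument must proceed at the level of the order itself, using the universal property of the cocomma and the t-conservativity of $u$, and it is here that the ability to adjoin constants --- guaranteed by preservation of slicing --- is indispensable.
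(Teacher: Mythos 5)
Your starting point --- reducing to $\sub_{-}(1)$ via \cref{reducetosub1} and bringing in the factorisation system of \cref{factorisation} --- matches the paper, and your argument for the direction (interpolation $\Rightarrow$ closure) is essentially the paper's, though you make it harder than it needs to be: one does not need $\sub_{u}(1)$ to be an order-embedding, only that $g$ reflects the top element. Given $ua=1$ one has $v1\le ua$, interpolation yields $c$ with $1\le gc$ and $fc\le a$, and t-conservativity of $g$ forces $c=1$, hence $a=1$. But your proposed dictionary is wrong at a point you then lean on repeatedly: t-conservativity of $f$ (\cref{tcons}) is only the reflection of the top element of $\sub(1)$, and it is \emph{not} equivalent to $\sub_f(1)$ being an order-embedding, even after passing to slices --- a map of distributive lattices can reflect $1$ without being injective or order-reflecting. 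So the inferences ``t-conservative, hence order-embedding after $\sub_{-}(1)$'' are unsupported in both directions, and in the converse direction you are in fact trying to prove something strictly stronger than what is required.

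The more serious gap is in the direction (closure $\Rightarrow$ interpolation). Your plan factors both legs of the span and pastes, but (i) the two outer squares in that decomposition are pushouts along the t-conservative parts $w,w'$, whereas the hypothesis is closure of t-conservative maps under \emph{cocomma} and says nothing about pushouts; and (ii) the claim that the inner cocomma of the span of filter quotients ``contributes no obstruction'' is the content of the theorem in disguise: surjectivity of $\sub_{\ell}(1)$ lets you lift an element to $\sub_{A}(1)$, but it does not tell you that the lift satisfies the two required inequalities, which is the whole difficulty. The paper's mechanism is different and pointwise: given $b\in\sub_{\mc B}(1)$ and $a\in\sub_{\mc A}(1)$ with $vb\le ua$, one slices the \emph{entire} cocomma square at the filter $g\inv(\dv b)$ (and correspondingly at $b$, $fg\inv(\dv b)$, $vb$); the top and bottom faces are pushouts by \cref{slicepushout}, so the front face is again a cocomma, and the map $\mc C/g\inv(\dv b)\to\mc B/b$ is t-conservative \emph{by construction} of the (filter quotient, t-conservative) factorisation. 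Closure under cocomma then makes $\mc A/fg\inv(\dv b)\to\mc D/vb$ t-conservative, and since $ua=1$ in $\mc D/vb$ this forces $a=1$ in $\mc A/fg\inv(\dv b)$, which unwinds to the existence of $c$ with $b\le gc$ and $fc\le a$. In other words, the factorisation system is used to \emph{manufacture} a t-conservative map to feed into the closure hypothesis for a specific pair $(b,a)$, not to decompose the square once and for all; your decomposition leaves the essential step unproved.
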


The next two sections will shift the focus from doctrines to fragments of geometric logic in the sense of \cite{di2025logic}. In \Cref{sec:etale}, after recalling the general theory of semantics prescription, we introduce the notion of classifier of \'etale maps (\cref{absoluteetale}), and prove the following theorem, connecting logics with a classifier to doctrines preserving slicing:

\begin{prop*}[\ref{etaleimpliessclicing}]
    If a fragment $\mc H$ has an \'etale map classifier, then $\ms T^{\mc H}$ preserves slicing.
\end{prop*}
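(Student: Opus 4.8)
The plan is to bootstrap from a single universal instance of slicing to all instances, using the colimit presentation of slicing established in \Cref{sec:slicing}. What must be shown is that, for a lex category $C$ and an object $X \in C$, the slice $\ms T^{\mc H}(C)/X$ of the free $\ms T^{\mc H}$-algebra agrees, as a $\ms T^{\mc H}$-algebra, with the free algebra $\ms T^{\mc H}(C/X)$ on the sliced lex category. Recall from \Cref{sec:slicing} that $C/X$ is computed as a colimit in $\Lex$: writing $\mathbb{L}$ for the free lex category on one object $\mathbb{O}$ and $\mathbb{L}/\mathbb{O}$ for the generic slice (the free lex category on an object together with a chosen global point), $C/X$ is the pushout-style colimit $C \coprod_{\mathbb{L}} \mathbb{L}/\mathbb{O}$, formed along the classifying functor of $X$ and the canonical inclusion $\mathbb{L} \to \mathbb{L}/\mathbb{O}$. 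Since the free functor $\Lex \to \alg(\ms T^{\mc H})$ is a left adjoint, it preserves this colimit, so $\ms T^{\mc H}(C/X) \simeq \ms T^{\mc H}(C) \coprod_{\ms T^{\mc H}(\mathbb{L})} \ms T^{\mc H}(\mathbb{L}/\mathbb{O})$ in $\alg(\ms T^{\mc H})$, the colimit being taken along the images of the two legs.

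It then remains to recognise the right-hand side as the slice $\ms T^{\mc H}(C)/X$, and this is where the hypothesis enters. By \cref{absoluteetale}, an étale classifier for $\mc H$ amounts to a $\ms T^{\mc H}$-algebra structure on the slice $\ms T^{\mc H}(\mathbb{L})/\mathbb{O}$ exhibiting the projection $\ms T^{\mc H}(\mathbb{L}) \to \ms T^{\mc H}(\mathbb{L})/\mathbb{O}$ as the generic étale map; that is, the slice projection $A \to A/Y$ of any $\ms T^{\mc H}$-algebra is its cobase change along the classifying map of $Y$. I would extract two consequences. First, the single universal instance: $\ms T^{\mc H}(\mathbb{L}/\mathbb{O}) \simeq \ms T^{\mc H}(\mathbb{L})/\mathbb{O}$, i.e.\ the free algebra on the generic slice is the slice of the free algebra on the generic object. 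Second, the classifying property itself, which for $A = \ms T^{\mc H}(C)$ and $Y = X$ reads $\ms T^{\mc H}(C)/X \simeq \ms T^{\mc H}(C) \coprod_{\ms T^{\mc H}(\mathbb{L})} \ms T^{\mc H}(\mathbb{L})/\mathbb{O}$. Chaining these with the colimit found above yields $\ms T^{\mc H}(C/X) \simeq \ms T^{\mc H}(C)/X$; pseudo-naturality in $C$ and $X$ is then automatic, as every equivalence used is induced by a universal property.

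The main obstacle I anticipate is precisely the comparison $\ms T^{\mc H}(\mathbb{L}/\mathbb{O}) \simeq \ms T^{\mc H}(\mathbb{L})/\mathbb{O}$: one has to reconcile the \emph{mapping-out} universal property of the generic slice $\mathbb{L}/\mathbb{O}$ in $\Lex$ (the feature that makes slicing a colimit) with the \emph{classifying} universal property that characterises the étale classifier inside $\alg(\ms T^{\mc H})$. In other terms, one must verify that applying $\ms T^{\mc H}$ to the free lex category on an object carrying a global point produces exactly the algebra representing étale maps of $\ms T^{\mc H}$-algebras equipped with a section --- a priori two unrelated presentations, whose coincidence is the genuine content of the proposition. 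Once this is in place, what remains is routine bicategorical bookkeeping for the pseudo-colimits involved (coherence of the comparison $2$-cells, invariance under equivalence), which the usual universal-property manipulations dispatch.
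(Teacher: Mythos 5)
There is a genuine gap, and you have in fact located it yourself: the identification $\ms T^{\mc H}(\mbb O/o)\simeq \ms T^{\mc H}(\mbb O)/o$ (your ``single universal instance'', writing $\mbb O$ for the free lex category on a generic object $o$), together with the claim that the slice projection of an arbitrary $\ms T^{\mc H}$-algebra is the cobase change in $\alg(\ms T^{\mc H})$ of this generic instance, is not something you can ``extract'' from \cref{absoluteetale} --- it is essentially the statement being proved. The \'etale classifier of \cref{absoluteetale} lives entirely on the topos side: it says that an \'etale geometric morphism between objects of $\WRInj(\mc H)$ is classified by a map to $\Set[\mbb O]$. It does not, by itself, equip any slice of a $\ms T^{\mc H}$-algebra with an algebra structure, nor does it assert that such a slice is a pushout in $\alg(\ms T^{\mc H})$; those facts (\cref{algslicing}, \cref{slicepushout}) are derived in the paper \emph{from} preservation of slicing. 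So your argument assumes its conclusion at the point where you pass from the pushout $\ms T^{\mc H}(C)\coprod_{\ms T^{\mc H}(\mbb O)}\ms T^{\mc H}(\mbb O)/o$ in $\alg(\ms T^{\mc H})$ --- whose underlying category you have no a priori control over, since colimits of algebras are not computed in $\Lex$ --- to the slice category $\ms T^{\mc H}(C)/X$.

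The paper's proof avoids the colimit bookkeeping entirely and works where the doctrine is actually defined, inside presheaf topoi. The comparison functor $\ms T^{\mc H}(\mc A)/X\to\ms T^{\mc H}(\mc A/X)$ is fully faithful for free, because both sides are full subcategories of $\psh(\mc A)/X\simeq\psh(\mc A/X)$. The entire content is essential surjectivity: given $y:Y\to X$ in $\ms T^{\mc H}(\mc A/X)$, one factors the \'etale map $\Pi_Y:\psh(\mc A)/Y\to\psh(\mc A)$ as $\Pi_X\circ\Pi_y$, observes that both factors lie in $\WRInj(\mc H)$ ($\Pi_X$ because $X$ is representable, $\Pi_y$ by the hypothesis on $y$), hence so does the composite, and then invokes the \'etale classifier to convert ``$\Pi_Y\in\WRInj(\mc H)$'' into ``$Y\in\ms T^{\mc H}(\mc A)$''. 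That last conversion is the only place the hypothesis is used, and it is precisely the bridge between the topos-level and algebra-level descriptions that your proposal defers to ``routine bicategorical bookkeeping''. If you want to salvage your outline, you would first need to prove this membership criterion ($Y\in\ms T^{\mc H}(\mc A)$ iff $\Pi_Y\in\WRInj(\mc H)$); at that point the direct argument above is already shorter than the colimit bootstrap.
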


Doctrines with a classifier abound in the literature, encompassing the most relevant examples. That being said, we cannot tell precisely how common this feature is, for we currently lack a counterexample, which could hide in some exotic behaviour that has never been observed before. Such investigation would be quite interesting.

In \Cref{sec:intfin} we deliver the main theorem of the paper, and we proceed to comment it in the remarks that follow it.

\begin{thm*}[\ref{interpolationofsubfragment}]
Let $\mathcal{H}$ be a fragment of  geometric logic between the regular and coherent fragment having an \'etale classifier. Then $\mc H$ has the interpolation property.
\end{thm*}

We finish the paper with a short \Cref{newintforcoh}, offering a new proof that the doctrine associated to coherent logic has interpolation (\Cref{interpolationpretopoi}). As we have discussed, this result was originally due to Pitts, who provided a proof using a version of Makkai's topos of filters. Our proof is much simpler and only requires the notion of classifying topos.

\subsection*{Comments and Further directions}
As we have discussed, besides the technical achievement of proving Craig interpolation, we understand this paper as a grounding to import ideas coming from algebraic logic to categorical logic (from posets to categories) with the intention of generalising several results in propositional logic to the predicate case. The authors had presented this program already in \cite[7.4]{di2025logic}. To some extent, this paper is a success in this direction. We will observe this in \Cref{hoog}. Yet, from other perspectives, our work remains unsatisfactory. 

Indeed our notion of fragment of geometric logic remains \textit{non-syntactic}, as was already discussed in \cite[7.3]{di2025logic} and it is thus hard to predict to how many logics our main results applied. It goes without saying that besides the examples that we present explicitly, we cannot tell precisely how logics between regular and coherent even look like. We discuss this limitation of our work in  \Cref{lim1}.

Finally, we also discuss in \Cref{lim2} that despite our analysis being inherently modular, and thus ready to be generalised to other contexts -- at least in principle -- it is rather unclear that our approach can cover every possible Craig-interpolation-style of result.

\begin{war*}
    Throughout the paper, when we work within a $2$-category, the limit and colimit notion we consider are always \emph{pseudo} in nature, i.e.\ we do not consider the corresponding \emph{strict} notions. For instance, by pullback in a 2-category, we always mean a \emph{pseudo}-pullback, rather than its strict companion. 
\end{war*}

\subsection*{Acknowledgments}
 The first named author received funding from Knut and Alice Wallenberg Foundation, grant no. 2020.0199. Both authors are grateful to Nathanael Arkor and Axel Osmond for their comments on a first draft of this paper.

\section{Interpolation from algebraic to categorical logic} \label{sec:prom}

We shall start by discussing the interpolation property for logics from a categorical perspective. First we will reformulate of the interpolation property in the propositional case (\Cref{interpos}) importing some wisdom of algebraic logic. Then we shall generalise such notion to the predicate case (\Cref{interpcat}). This is made possible by the syntactic category construction associated to theories, or equivalently the Lindenbaum-Tarski algebra for propositional theories. At the end of the section we will introduce a convenient notion of \textit{logic}, which we shall refer to as \textit{doctrine} (\Cref{doctrinesdef}) and define an appropriate notion of doctrine satisfying the interpolation property (\Cref{intfordoc}). A characterisation of these gadget will be our main effort throughout the entire paper.

\subsection{Categorical notions of Interpolation}
Abstractly, the interpolation property for propositional logics can be phrased in terms of the following interpolation property for \emph{posets}.

\begin{defn}[Interpolation for posets]\label{interpos}
    Consider a lax square between posets, i.e.\ a square below where $vg \le uf$ in the pointwise order,
    \[\begin{tikzcd}
    	C & A \\
    	B & D
    	\arrow["f", from=1-1, to=1-2]
    	\arrow["g"', from=1-1, to=2-1]
    	\arrow["\le"{description}, draw=none, from=1-1, to=2-2]
    	\arrow["u", from=1-2, to=2-2]
    	\arrow["v"', from=2-1, to=2-2]
    \end{tikzcd}\]
    We say it has \emph{interpolation} if for any $b \in B$ and $a\in A$, if $vb \le ua$, then there exists $c\in C$ that $b \le gc$ and $fc \le a$.
\end{defn}

\begin{rem}[Syntactic reformulation]\label{syntacrefprop}
    For the usual classical or intuitionistic propositional logic, the interpolation property is equivalent to certain squares between Boolean algebras or Heyting algebras having the interpolation property in the sense above. For instance, let $\varphi(\ov p,\ov r)$ and $\psi(\ov p,\ov s)$ be two classical formulas. The usual interpolation property for classical logic states that if the following sequent is provable,
    \[ \varphi(\ov p,\ov r) \vdash \psi(\ov p,\ov s), \]
    then there exists a formula $\chi(\ov p)$, such that 
    \[ \varphi(\ov p,\ov r) \vdash \chi(\ov p), \quad \chi(\ov p) \vdash \psi(\ov p,\ov s). \]
    Let $B(\ov p)$ be the free Boolean algebra generated by a lists of objects $\ov p$. The above interpolation property equivalently states that the following \emph{pushout square} of Boolean algebras has the interpolation property in the sense of~\cref{interpos}.
    \[\begin{tikzcd}
    	{B(\ov p)} & {B(\ov p,\ov s)} \\
    	{B(\ov p,\ov r)} & {B(\ov p,\ov r,\ov s)}
    	\arrow[from=1-1, to=1-2]
    	\arrow[from=1-1, to=2-1]
    	\arrow[from=1-2, to=2-2]
    	\arrow[from=2-1, to=2-2]
        \arrow["\lrcorner"{anchor=center, pos=0.125, rotate=180}, draw=none, from=2-2, to=1-1]
    \end{tikzcd}\]
\end{rem}

\begin{rem}[Lax squares v.s. commutative squares]\label{laxsquare}
    Notice that for squares of morphisms between Boolean algebras or Heyting algebras, the more general treatment by looking at \emph{lax} squares doesn't do much, because the negation operator is \emph{contravariant}, thus if $f,g : B \to B'$ are two morphisms between Boolean algebras and $f \le g$ pointwise, then $f = g$. However, this generalisation will turn out to be crucial when we consider \emph{positive fragments}, --say-- propositional logics only involving $\top,\wedge,\bot,\vee$, or correspondingly interpolation squares for \emph{distributive lattices}. This paper concerns with subfragments of \emph{geometric logic}~\cite[D1]{elephant2},~\cite[Ch. 1]{caramello2018theories}, where formulas are built out of \emph{positive} logical operators. Hence the same comment applies to the definition of interpolation of \emph{categories} in~\cref{interpcat} below as well.
\end{rem}

For first-order theories, we need to formulate an interpolation property for (lax) squares of \emph{categories}, which is slightly more involved:

\begin{defn}[Interpolation for left exact categories]\label{interpcat}
    Let $\Lex$ be the 2-category of left exact categories. Consider a lax diagram in $\Lex$ below,
    \[\begin{tikzcd}
    	{\mc C} & {\mc A} \\
    	{\mc B} & {\mc D}
    	\arrow["f", from=1-1, to=1-2]
    	\arrow["g"', from=1-1, to=2-1]
    	\arrow[""{name=0, anchor=center, inner sep=0}, "u", from=1-2, to=2-2]
    	\arrow[""{name=1, anchor=center, inner sep=0}, "v"', from=2-1, to=2-2]
    	\arrow["\alpha", curve={height=-6pt}, between={0.2}{0.8}, Rightarrow, from=1, to=0]
    \end{tikzcd}\]
    We say this lax square has \emph{interpolation}, if for any $X\in\mc C$, the lax square on posets below has interpolation in the sense of~\cref{interpos},
    \[
    \begin{tikzcd}
        \sub_{\mc C}(X) \ar[d, "g"'] \ar[r, "f"] & \sub_{\mc A}(fX) \ar[r, "u"] & \sub_{\mc D}(ufX) \ar[d, "\alpha^*"] \\
        \sub_{\mc B}(gX) \ar[rr, "v"'] & & \sub_{\mc D}(vgX)
        \arrow["\le"{description}, draw=none, from=1-1, to=2-3]
    \end{tikzcd}
    \]
    Here $\alpha : vgX \to ufX$ is the component of the 2-cell $\alpha$ at $X$, and the map 
    \[ \alpha^* : \sub_{\mc D}(ufX) \to \sub_{\mc D}(vgX) \]
    is the induced map on subobjects by pulling back along $\alpha$. The above square is a lax square because for any subobject $U \hook X$ in $\mc C$, naturality of $\alpha$ gives us the following commuting square,
    \[\begin{tikzcd}
    	vgU & ufU \\
    	vgX & ufX
    	\arrow["\alpha", from=1-1, to=1-2]
    	\arrow[hook, from=1-1, to=2-1]
    	\arrow[hook, from=1-2, to=2-2]
    	\arrow["\alpha"', from=2-1, to=2-2]
    \end{tikzcd}\]
    which implies $vgU\le\alpha^*ufU$ as subobjects of $vgX$.
\end{defn}

\begin{rem}[Syntactic reformulation]\label{syntacreffis}
    For concreteness, let us assume again we work with classical first-order logic for the moment. The usual formulation of interpolation in logic is the following. Let $\Sigma_0,\Sigma_1$ be two signatures. Concretely, a signature contains a set of \emph{sorts, relation symbols,} and \emph{function symbols}. Let $\varphi_i(\ov x)$ in the language of $\Sigma_i$ for $i = \set{0,1}$, where the \emph{free variables are from the common sorts} of $\Sigma_0$ and $\Sigma_1$.\footnote{This condition is usually not explicit in traditional discussion on interpolation of first-order logic, because usually only single sorted signatures are considered.} If the sequent $\varphi_0(\ov x) \vdash \varphi_1(\ov x)$ is provable, then there is a formula $\varphi(\ov x)$, again over the same context of variables, but belong to the common signature $\Sigma_0\cap\Sigma_1$, where the two sequents $\varphi_0(\ov x) \vdash \varphi(\ov x)$ and $\varphi(\ov x) \vdash \varphi_1(\ov x)$ are both provable.

    To see how the above interpolation property manifests itself in our categorical formulation, consider the following pushout diagram, where $\mc B[\Sigma]$ denotes the Boolean syntactic category generated by the signature $\Sigma$,
    \[\begin{tikzcd}
    	{\mc B[\Sigma_0\cap\Sigma_1]} & {\mc B[\Sigma_1]} \\
    	{\mc B[\Sigma_0]} & {\mc B[\Sigma_0\cup\Sigma_1]}
    	\arrow["{i_1}", tail, from=1-1, to=1-2]
    	\arrow["{i_0}"', tail, from=1-1, to=2-1]
    	\arrow["{j_1}", tail, from=1-2, to=2-2]
    	\arrow["{j_0}"', tail, from=2-1, to=2-2]
        \arrow["\lrcorner"{anchor=center, pos=0.125, rotate=180}, draw=none, from=2-2, to=1-1]
    \end{tikzcd}\]
    The fact that $\varphi_0(\ov x)$ and $\varphi_1(\ov x)$ share the same context of variables means that there is an object $X = X_1 \times \cdots X_n$ in $\mc B[\Sigma_0\cap\Sigma_1]$, viz. the interpretation for the sorts of the list of variables $\ov x$, such that $\varphi_0(\ov x)$ is a subobject of $i_0X$ in $\mc C[\Sigma_0]$, and similarly $\varphi_1(\ov x)$ is a subobject of $i_1X$. The fact that $\varphi_0(\ov x) \vdash \varphi_1(\ov x)$ is provable simply means that $j_0\varphi(\ov x) \le j_1\varphi_1(\ov x)$ as a subobject of $j_0i_0X = j_1i_1X$. Thus, the interpolation property above will again be equivalent to the interpolation property of the above square in the sense of~\cref{interpcat}.
\end{rem}

\begin{rem}[Variations on the notion of interpolation]
    When moving from propositional to first-order logic (from posets to categories), there are several different possibilities of categorifying the notion of interpolation, at least from an algebraic point of view. For instance, one might consider a more literal categorification of the interpolation property for posets into categories
    \[\begin{tikzcd}
    	{\mc C} & {\mc A} \\
    	{\mc B} & {\mc D}
    	\arrow["f", from=1-1, to=1-2]
    	\arrow["g"', from=1-1, to=2-1]
    	\arrow[""{name=0, anchor=center, inner sep=0}, "u", from=1-2, to=2-2]
    	\arrow[""{name=1, anchor=center, inner sep=0}, "v"', from=2-1, to=2-2]
    	\arrow["\alpha", curve={height=-6pt}, between={0.2}{0.8}, Rightarrow, from=1, to=0]
    \end{tikzcd}\]
    by stating that the above lax square has interpolation iff for any $b\in\mc B$ and $a\in\mc A$, given any \emph{morphism} $x : vb \to ua$, there exists $c\in\mc C$ and morphisms $x_0 : b \to gc$ and $x_1 : fc \to a$ making the diagram below commute,
    \[
    \begin{tikzcd}
        vgc \ar[r, "\alpha"] & ufc \ar[d, "ux_1"] \\
        vb \ar[r, "x"'] \ar[u, "vx_0"] & ua
    \end{tikzcd}
    \]
    This notion of interpolation squares for instance appears in~\cite{vcubric1993results}, where the author discusses interpolation for (bi)Cartesian closed categories. However, the intuition there is quite different: Such an interpolation property for certain squares of (bi)Cartesian closed categories are understood as an refinement for the classical interpolation property for intuitionistic \emph{propositional} logic, upgrading it to a \emph{proof relevant} theorem via the Curry-Howard correspondence; also see e.g.~\cite{saurin2025interpolation}. Given~\cref{syntacreffis}, we believe the notion given in~\cref{interpcat} is the more natural choice for the questions addressed by the current paper, but we acknowledge that by no means this is the only interesting notion of interpolation of categories to consider.
\end{rem}

\subsection{Doctrines and interpolation}

One nice thing about the notion of interpolation given in~\cref{interpcat} is that it is a general notion that in principle applies to any type of fragments of logic that admits a classifying category construction. The literature of categorical logic usually refers to such logics with the generic term \textit{doctrine} (\cite{kock1977doctrines}). In this sense, doctrines are a cluster of variations on infinitary predicate logic for which we have an understanding of their syntactic category. This is not a precise definition, but the literature has collected aggregated evidence that a good way to approach these objects is to study lax-idempotent pseudomonads on $\lex$ (\cite{di2025bi}), the $2$-category of small lex categories. Often (and especially in its seminal days) lax-idempotent pseudomonads are \textit{just} called doctrines precisely because of this intuition (\cite{zoberlein1976doctrines}). In this paper we restrict our notion of doctrine to the definition below, we shall comment in a moment about our choice.

\begin{defn}[Doctrines] \label{doctrinesdef}
    Recall from~\cite{di2025logic} that a doctrine is a relative pseudomonad $(\ms T,\eta)$ over the inclusion $j : \lex \to \Lex$ from small lex categories into (locally small) lex categories, equipped with a locally fully faithful pseudonatural transformation $\sigma : \ms T \nt \psh$ between $\ms T$ and the presheaf construction $\psh$. We denote the 2-category of (small) pseudo-algebras for $\ms T$ as $\alg(\ms T)$.
\end{defn}

\begin{war}[Keep it small] \label{boundeddoc}
Throughout the paper we shall restrict our attention to \textit{bounded} doctrines in the sense of \cite[Sec. 5.1.1]{di2025logic}. This means precisely that the relative pseudomonad actually lands in small lex categories, and thus we can avoid every possible size issue. This would not be entirely required but makes the discussion much smoother and allows for crisper proofs.
\end{war}

\begin{rem}[A \textit{reasonable} definition of doctrine]
All the $2$-categories of \textit{theories} associated to fragments of predicate logic can be specified by a doctrine in the sense above (see the introduction of \cite{di2025bi} and its last subsection). It is also useful to see \cite[Example 1.1.8]{di2024sketches}. The $2$-category of -- say -- small coherent categories is indeed the category of algebras for a lax-idempotent pseudomonad on $\lex$ sitting inside the presheaf construction (\cite[Sec. 6]{di2025bi} and \cite[Sec. 5.6]{lex}). Relative pseudomonads are one possible solution to handle size issue: the $2$-category of -- say -- infinitary pretopoi is indeed the 2-category of algebras for the relative lax-idempotent pseudomonad on $\lex$ given by the presheaf construction \cite[Prop. 2.5]{lex}.
\end{rem}


For a doctrine $\ms T$ viewed as a fragment of logic, given~\cref{syntacrefprop} and~\cref{syntacreffis}, the interpolation property for the logic will be equivalent to the interpolation property for certain colimit squares of $\ms T$-algebras. As also mentioned in~\cref{laxsquare}, the appropriate colimit squares will be \emph{cocommas} rather than \emph{pushouts} in the context of doctrines on $\Lex$. Thus we make the following definition:

\begin{defn}[Interpolation property for doctrines]\label{intfordoc}
    Let $\ms T$ be a doctrine on lex categories. We say it has the interpolation property if every cocomma square in $\alg(\ms T)$ has interpolation.
\end{defn}


\begin{rem}
    Technically, the statement that every pushout square has the interpolation property is simply false for most of the fragments of logic we care about. For instance, this fails even for the (2-)category of distributive lattices (cf.~\cite{pitts1983amalgamation}).
\end{rem}

\section{Doctrines preserving slicing: a propositional bootstrap} \label{sec:slicing}

In this section we will introduce the notion of \textit{doctrines preserving slicing} and prove that for such doctrines we can prove a \textit{propositional bootstrap}, i.e.\ the interpolation property can be detected by a propositional truncation of the doctrine (\Cref{reducetosub1}).

\subsection{The construction of slices in lex categories}

One operation that will turn out to be extremely crucial is taking \emph{slices} in the category of algebras of a doctrine. However, to give it a 2-categorical description of the slice operation, we need to observe some additional structures present in $\Lex$. Indeed slices are often understood as \textit{comma} objects in $\Cat$ (and thus as certain forms of $2$-dimensional limits), while in $\Lex$ they enjoy colimit-like properties.

\begin{rem}[Remember our friend $\mbb O$?]
Let $\mbb O$ be the free lex category on one generator. As a category it is given by $\Fin\op$, the opposite category of finite sets, but we only care about its universal property rather than its concrete construction. It corepresents the forgetful 2-functor $\Lex \to \Cat$, i.e.\ for any $\mc A\in\Lex$,
\[ \mc A \simeq \Lex(\mbb O,\mc A). \]
Also note that $\Lex$ has a \emph{zero-object}, which is given by the one-object category where we denote as $0$. 
\end{rem}

Let $\mc A$ be a lex category. For any $a\in\mc A$, there is a left exact pullback functor $\mc A \to \mc A/a$, which takes $b\in\mc A$ to the projection $b \times a \to a$. In other words, this is the right adjoint of the forgetful functor $\mc A/a \to \mc A$. The pullback functor gives the slice a mapping out universal property as lex categories:

\begin{lem}[Slices are cocommas]
    For any lex category $\mc A$ and $a\in\mc A$, the pullback functor $\mc A \to \mc A/a$ arises as the following cocomma in $\Lex$,
    \[\begin{tikzcd}
    	{\mbb O} & {\mc A} \\
    	0 & {\mc A/a}
    	\arrow["a", from=1-1, to=1-2]
    	\arrow[from=1-1, to=2-1]
    	\arrow[""{name=0, anchor=center, inner sep=0}, from=1-2, to=2-2]
    	\arrow[""{name=1, anchor=center, inner sep=0}, from=2-1, to=2-2]
    	\arrow[curve={height=-6pt}, between={0.2}{0.8}, Rightarrow, from=1, to=0]
    \end{tikzcd}\]
    The 2-cell is given by the diagonal $\delta_a : 1_a \to \pi_a$ in $\mc A/a$, where $1_a : a \to a$ and $\pi_a : a \times a \to a$ are the images of $1,a$ along $\mc A \to \mc A/a$.
\end{lem}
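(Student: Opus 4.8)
The plan is to check the cocomma universal property directly. Write $a^{*}\colon\mc A\to\mc A/a$ for the pullback functor $b\mapsto(b\times a\to a)$ of the statement, and fix an arbitrary lex category $\mc D$. A lax cocone under the span $0\leftarrow\mbb O\xrightarrow{a}\mc A$ with vertex $\mc D$ consists of a lex functor $0\to\mc D$, a lex functor $Q\colon\mc A\to\mc D$, and a $2$-cell from the composite $\mbb O\to0\to\mc D$ to the composite $\mbb O\xrightarrow{a}\mc A\xrightarrow{Q}\mc D$. A lex functor out of $0$ must send the unique (hence terminal) object to $1_{\mc D}$ and is thus essentially unique, so the first composite classifies $1_{\mc D}$; since $\Lex(\mbb O,\mc D)\simeq\mc D$, the second classifies $Qa$, and a $2$-cell between them is precisely a morphism $\xi\colon1_{\mc D}\to Qa$. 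Hence the category of lax cocones is the comma category whose objects are pairs $(Q,\xi)$ with $Q\colon\mc A\to\mc D$ lex and $\xi\colon1_{\mc D}\to Qa$, and whose morphisms $(Q,\xi)\to(Q',\xi')$ are the $2$-cells $\beta\colon Q\Rightarrow Q'$ with $\beta_a\circ\xi=\xi'$. I would then show that the comparison $\Phi\colon\Lex(\mc A/a,\mc D)\to(\text{this comma category})$, $G\mapsto(G\circ a^{*},\,G\delta_a)$ --- which makes sense because $\id_a$ is terminal in $\mc A/a$, so $G\delta_a\colon G(\id_a)=1_{\mc D}\to G(a^{*}a)=(G a^{*})(a)$ --- is an equivalence, naturally in $\mc D$.

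For a pseudo-inverse, to a pair $(Q,\xi)$ I assign $\widehat{Q}:=\xi^{*}\circ(Q/a)$, where $Q/a\colon\mc A/a\to\mc D/Qa$ is the functor induced by $Q$ on slices and $\xi^{*}\colon\mc D/Qa\to\mc D/1_{\mc D}\simeq\mc D$ is pullback along $\xi$. This is a lex functor: $Q/a$ carries the terminal object $\id_a$ to $\id_{Qa}$ and preserves connected limits since these are created from the base in both slices and $Q$ preserves them, while $\xi^{*}$ is a right adjoint --- to postcomposition with $\xi$ --- hence continuous. A short computation gives $\widehat{Q}\circ a^{*}\cong Q$, because $\widehat{Q}(a^{*}b)=(Qb\times Qa)\times_{Qa}1_{\mc D}\cong Qb$ naturally in $b$; and under this identification $\widehat{Q}(\delta_a)$ is exactly $\xi$. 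Thus $\Phi\widehat{Q}\cong(Q,\xi)$, so $\Phi$ is essentially surjective.

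The heart of the matter --- and the step I expect to carry the weight, though it is elementary --- is that every object $f\colon b\to a$ of $\mc A/a$ is canonically the pullback of the ``generic point'' $\delta_a\colon(\id_a)\to(a^{*}a)$ along $a^{*}f\colon(a^{*}b)\to(a^{*}a)$; concretely this is the identity $b\cong a\times_{a\times a}(b\times a)$ in $\mc A$. Granting it, any lex $G\colon\mc A/a\to\mc D$ is determined up to canonical isomorphism by $G a^{*}$ together with $G\delta_a$, via
\[ G(f)\;\cong\;1_{\mc D}\times_{(Ga^{*})(a)}(Ga^{*})(b), \]
the pullback of $G\delta_a$ along $(Ga^{*})(f)$; this yields $\widehat{Ga^{*}}\cong G$ naturally, and the same pullback-plus-naturality bookkeeping identifies $2$-cells $G\Rightarrow G'$ with $2$-cells $Ga^{*}\Rightarrow G'a^{*}$ compatible with $G\delta_a,G'\delta_a$ (the component at $\id_a$ being forced to be the identity of $1_{\mc D}$), so $\Phi$ is fully faithful. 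Essential surjectivity plus full faithfulness give the equivalence, and since $\widehat{(-)}$ is defined by an explicit formula it is natural in $\mc D$; this is exactly the assertion that $\mc A\to\mc A/a$, with $2$-cell $\delta_a$, is the displayed cocomma. The only labour remaining is the routine $2$-categorical coherence --- verifying the displayed isomorphisms are natural in $f$ and pseudonatural in $\mc D$ --- which needs no idea beyond the pullback presentation of the objects of $\mc A/a$.
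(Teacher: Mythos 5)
Your proposal is correct and follows the same route as the paper: identify a lax cocone under the span $0\leftarrow\mbb O\xrightarrow{a}\mc A$ with a pair $(Q,\xi)$ of a lex functor $Q\colon\mc A\to\mc D$ and a global section $\xi\colon 1\to Qa$, and then invoke the universal property of the slice. The only difference is that the paper cites this last universal property as well known, whereas you prove it in full (via the pseudo-inverse $\xi^{*}\circ(Q/a)$ and the observation that every object of $\mc A/a$ is a pullback of $\delta_a$), which is a correct and complete elaboration of the same argument.
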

\begin{proof}
    It is well-known that to give a lex functor $\mc A/a \to \mc E$, viz. a diagram below
    \[\begin{tikzcd}
    	{\mc A} & {\mc E} \\
    	{\mc A/a}
    	\arrow["x", from=1-1, to=1-2]
    	\arrow[from=1-1, to=2-1]
    	\arrow["{\pair{x,\alpha}}"', dashed, from=2-1, to=1-2]
    \end{tikzcd}\]
    it is equivalent to give a lex functor $x : \mc A \to \mc E$ and a global section $1 \to xa$ in $\mc E$. This immediately implies the above lax square is a cocomma.
\end{proof}

\begin{rem}[Slicing as coinserter]\label{slicecoinserter}
    Equivalently, the map $\mc A \to \mc A/a$ can also be realised as the following \emph{coinserter}.     In particular, this means the slicing map $\mc A \to \mc A/a$ is an \emph{eso} in the 2-category $\Lex$.
    \[\begin{tikzcd}
    	{\mbb O} & {A/a} \\
    	{\mc A}
    	\arrow[""{name=0, anchor=center, inner sep=0}, "{\id_a}", shift left=2, from=1-1, to=1-2]
    	\arrow[""{name=1, anchor=center, inner sep=0}, "{\pi_a}"', shift right=2, from=1-1, to=1-2]
    	\arrow["1", shift left, from=1-1, to=2-1]
    	\arrow["a"', shift right, from=1-1, to=2-1]
    	\arrow[curve={height=12pt}, from=2-1, to=1-2]
    	\arrow["{\delta_a}", between={0.2}{0.8}, Rightarrow, from=0, to=1]
    \end{tikzcd}\]
\end{rem}
\begin{rem}[Slicing as a mapping-out universal property]
    In more concrete terms, the above is a coinserter diagram simply means that a lex functor from the slice $\mc A/a$ is concretely given by a pair
    \[ \pair{f,x} : \mc A/a \to \mc B \]
    where $f : \mc A \to \mc B$ is a lex functor, and $x : 1 \to fa$ is a 2-cell, or equivalently a global section of $fa$ in $\mc B$. In this case, the diagram below commutes,
    \[
    \begin{tikzcd}
        \mc A \ar[r, "f"] \ar[d] & \mc B \\ 
        \mc A/X \ar[ur, "\pair{f,x}"']
    \end{tikzcd}
    \]
\end{rem}

\subsection{Doctrines preserving slicing, a propositional bootstrap}

\subsubsection{Doctrines preserving slicing and their algebras}
\begin{defn}\label{preserveslice}
    Given a doctrine $(\ms T,\eta)$, we say it \emph{preserves slicing} if for any $\mc A\in\lex$ and any $a\in\mc A$, the canonical lex functor below is an equivalence,
    \[ (\ms T\mc A)/\eta a \to \ms T(\mc A/a). \]
  
\end{defn}

\begin{exa}
    We will see in~\cref{algslicing} that this property is almost equivalent to the fact that $\ms T$-algebras should be closed under slicing, and they have the usual universal property explained in~\cref{logicslice}. All the known example of doctrines corresponding to first-order logics has this property, including the geometric, coherent, regular, disjunctive fragment, etc.
\end{exa}

\begin{rem}
    Curiously, this basic property in the context of propositional logic also appears in~\cite{sterling2025domains}, where the closure under slicing implies that the universe of $\ms T$-propositions of a fragment of propositional logic $\ms T$ in the classifying topos of $\ms T$-algebras form a \emph{dominance} in the sense of~\cite{rosolini1986continuity}. We feel that the closuring under slicing is such a fundamental property for logics that deserves a better understanding.
\end{rem}

\begin{rem}[Logical interpretation of taking slices]\label{logicslice}
    From a logical perspective, if we identify a $\ms T$-algebra $\mc A$ as the classifying category of some theory $\mbb T$ in the fragment of logic corresponding to $\ms T$, then $\ms T$-algebras being closed under slices corresponds to the fact that \emph{add a (finite family of) constant(s) satisfying some formula $\varphi$} is computed by slicing. Indeed, let $X\in\mc A$ corresponds to some formula $\varphi(\ov x)$ in $\mbb T$. Then the above universal property of the slice $\mc A/X$ exactly states that it corresponds to the new theory $\mbb T/\varphi$ obtained as follows:
    \begin{itemize}
        \item For any free variable $x_i$ in $\varphi(\ov x)$, add a new \emph{constant} $c_i$ of the same sort;
        \item Then also add a new \emph{axiom} stating that $\varphi(\ov c)$ holds.
    \end{itemize}
\end{rem}

Now we proceed to connect the preservation of slicing to the closure of algebras under slicing, this will be achieved in \Cref{algslicing}.
First, we give an alternative characterisation of when a doctrine $\ms T$ preserves slicing via the following notion of \emph{cocartesian arrows}:

\begin{defn}
  For $f : \mc A \to \mc B$ in $\lex$, we say it is \emph{$\ms T$-cocartesian} if the unit square for $f$ is a pseudo-pushout,
  \[\begin{tikzcd}
    \mc A & \mc B \\
    {\ms T\mc A} & {\ms T\mc B}
    \arrow["f", from=1-1, to=1-2]
    \arrow["\eta"', from=1-1, to=2-1]
    \arrow["\eta", from=1-2, to=2-2]
    \arrow[from=2-1, to=2-2]
    \arrow["\lrcorner"{anchor=center, pos=0.125, rotate=180}, draw=none, from=2-2, to=1-1]
  \end{tikzcd}\]
\end{defn}

\begin{lem}\label{preservelocalisationcocartesian}
  $\ms T$ preserves slicing iff for any $\mc A$ and $a\in\mc A$, the lex functor $\mc A \to \mc A/a$ is $\ms T$-cocartesian.
\end{lem}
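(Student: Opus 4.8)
The plan is to identify the genuine (pseudo-)pushout of $\ms T\mc A \xleftarrow{\eta} \mc A \xrightarrow{g} \mc A/a$, where $g$ denotes the pullback functor, with the slice $(\ms T\mc A)/\eta a$, and then to check that the universal comparison from this pushout to $\ms T(\mc A/a)$ is precisely the canonical functor of \Cref{preserveslice}. Granting this, the statement ``$g$ is $\ms T$-cocartesian'' --- i.e.\ that the unit square of $g$ is a pseudo-pushout --- becomes literally the same assertion as ``the canonical functor $(\ms T\mc A)/\eta a \to \ms T(\mc A/a)$ is an equivalence'', and quantifying over all $\mc A \in \lex$ and $a \in \mc A$ yields the lemma.

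For the pushout computation I would use the cocomma presentation of slices established just above: $g : \mc A \to \mc A/a$ is the cocomma of $0 \leftarrow \mbb O \xrightarrow{a} \mc A$ with $2$-cell $\delta_a$, and likewise $\ms T\mc A \to (\ms T\mc A)/\eta a$ is the cocomma of $0 \leftarrow \mbb O \xrightarrow{\eta a} \ms T\mc A$. Since $\eta a$ factors as $\mbb O \xrightarrow{a} \mc A \xrightarrow{\eta} \ms T\mc A$, a cocomma over a composite may be computed in two stages --- take the cocomma over the first factor, then push out along the second --- because both resulting lex categories have the same mapping-out universal property: a lex functor out of either amounts to a lex functor $h$ out of $\ms T\mc A$ together with a global section of $h(\eta a)$. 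Hence the square with corners $\mc A$, $\mc A/a$, $\ms T\mc A$, $(\ms T\mc A)/\eta a$ is a pseudo-pushout in $\Lex$.

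It then remains to recognise the comparison. By the cocomma universal property of $(\ms T\mc A)/\eta a$, a lex functor out of it is a pair $(h, s)$ with $h$ a lex functor out of $\ms T\mc A$ and $s$ a global section of $h(\eta a)$; applying this with $h = \ms T g$ and using pseudonaturality of $\eta$ gives $\ms T g(\eta a) \cong \eta(ga) = \eta(\pi_a)$, which carries the global section $s = \eta(\delta_a) : 1 \cong \eta(1_a) \to \eta(\pi_a)$. The functor $(\ms T\mc A)/\eta a \to \ms T(\mc A/a)$ determined by $(\ms T g, \eta(\delta_a))$ is, on the one hand, exactly the canonical comparison of \Cref{preserveslice}, and on the other hand exactly the functor induced out of the pseudo-pushout by $\ms T g$ and $\eta_{\mc A/a}$ (which agree on $\mc A$ by pseudonaturality of $\eta$). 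Combined with the previous paragraph, this gives the equivalence of the two conditions.

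The step requiring genuine care is the staged-cocomma (pasting) computation of the second paragraph: one must make sure the mediating $2$-cells --- the diagonal $\delta_a$ and its image $\eta(\delta_a)$ --- are matched up correctly, and that the pasting is valid for \emph{pseudo}-colimits in the $2$-category $\Lex$ (per the running convention). Everything else is an unwinding of universal properties, and the relative/bounded setting of \Cref{boundeddoc} causes no trouble here, since $\ms T\mc A$, $\ms T(\mc A/a)$ and $(\ms T\mc A)/\eta a$ are all small lex categories.
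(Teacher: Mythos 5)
Your argument is correct and is essentially the paper's own proof: both rest on the pasting lemma for cocommas and pushouts applied to the composite of the slice cocomma square $\mbb O \to \mc A \to \mc A/a$ with the unit square for $\mc A \to \mc A/a$, so that the outer rectangle is the cocomma presenting $(\ms T\mc A)/\eta a$ iff the unit square is a pseudo-pushout. The only difference is that you additionally spell out why the comparison out of the pushout coincides with the canonical functor of \cref{preserveslice}, a point the paper leaves implicit.
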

\begin{proof}
  In the following diagram below,
    \[\begin{tikzcd}
    	{\mbb O} & A & {\ms TA} \\
    	0 & {A/a} & {\ms TA}
    	\arrow["a", from=1-1, to=1-2]
    	\arrow[from=1-1, to=2-1]
    	\arrow["\eta", from=1-2, to=1-3]
    	\arrow[""{name=0, anchor=center, inner sep=0}, from=1-2, to=2-2]
    	\arrow[from=1-3, to=2-3]
    	\arrow[""{name=1, anchor=center, inner sep=0}, from=2-1, to=2-2]
    	\arrow["\eta"', from=2-2, to=2-3]
    	\arrow[curve={height=-6pt}, between={0.2}{0.8}, Rightarrow, from=1, to=0]
    \end{tikzcd}\]
  by definition $\ms T$ preserves slicing iff the outer square is a cocomma. Since the left square is a cocomma, by the pasting lemma this holds iff the right square is a 2-pushout. 
\end{proof}

We still want to prove that if $\ms T$ is a doctrine preserving slicing, then for any $\ms T$-algebra $\mc A$, the slice $\mc A/a$ is again a $\ms T$-algebra, and $\mc A \to \mc A/a$ is a homomorphism between $\ms T$-algebras. In fact, one can prove a stronger result, which will then entail our claim.

\begin{prop}[$\ms T$-algebras descend along $\ms T$-cocartesian maps]\label{algdescent}
   If a 1-cell $f : \mc A \to \mc B$ is $\ms T$-cocartesian and $\mc A$ is a $\ms T$-algebra, then so is $\mc B$, and $f$ is a $\ms T$-algebra morphism. 
\end{prop}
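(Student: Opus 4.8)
The strategy is to reduce the statement to the construction of a retraction of the unit $\eta_{\mc B}\colon \mc B\to\ms T\mc B$, and then to invoke lax-idempotency. Recall that $\ms T$, like every doctrine in the sense of \cref{doctrinesdef}, is lax-idempotent (it embeds locally fully faithfully into the lax-idempotent pseudomonad $\psh$), and that each unit $\eta_{\mc X}$ is fully faithful, since $\sigma_{\mc X}\circ\eta_{\mc X}$ is the Yoneda embedding and $\sigma$ is locally fully faithful. For a lax-idempotent relative pseudomonad one has the standard facts that (i) an object $\mc X$ carries a (necessarily essentially unique) $\ms T$-algebra structure if and only if there is a $1$-cell $x\colon\ms T\mc X\to\mc X$ with $x\circ\eta_{\mc X}\cong\id_{\mc X}$, in which case $x$ is the structure map and $x\dashv\eta_{\mc X}$ with invertible counit; and (ii) between $\ms T$-algebras $(\mc A,a)$ and $(\mc B,b)$ every $1$-cell $f\colon\mc A\to\mc B$ of $\lex$ carries a canonical comparison $2$-cell $b\circ\ms Tf\nt f\circ a$, and $f$ is a morphism of $\ms T$-algebras precisely when this cell is invertible.

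So fix the structure map $a\colon\ms T\mc A\to\mc A$ of $\mc A$, together with a chosen isomorphism $a\circ\eta_{\mc A}\cong\id_{\mc A}$. The key step is to build $b$ from the hypothesis that the unit square for $f$ is a pseudo-pushout. Consider the cocone on the span $\ms T\mc A\xleftarrow{\eta_{\mc A}}\mc A\xrightarrow{f}\mc B$ whose legs are $f\circ a\colon\ms T\mc A\to\mc B$ and $\id_{\mc B}\colon\mc B\to\mc B$ and whose comparison $2$-cell is the invertible composite
\[ f\circ a\circ\eta_{\mc A}\;\cong\;f\;=\;\id_{\mc B}\circ f . \]
Since the unit square is a pseudo-pushout, this cocone factors, uniquely up to coherent isomorphism, through the coprojections $\eta_{\mc B}$ and $\ms Tf$, producing a $1$-cell $b\colon\ms T\mc B\to\mc B$ together with invertible $2$-cells $b\circ\eta_{\mc B}\cong\id_{\mc B}$ and $b\circ\ms Tf\cong f\circ a$.

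The first isomorphism exhibits $b$ as a retraction of $\eta_{\mc B}$, so by (i) the category $\mc B$ is a $\ms T$-algebra with structure map $b$ (and $b\dashv\eta_{\mc B}$). It remains to see that $f$ is a morphism of $\ms T$-algebras; by (ii) this amounts to the invertibility of the canonical comparison cell $b\circ\ms Tf\nt f\circ a$, which, by uniqueness in the universal property of the pseudo-pushout, coincides with the invertible $2$-cell $b\circ\ms Tf\cong f\circ a$ produced above. This completes the argument.

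I expect the main obstacle to be coherence bookkeeping rather than conceptual content: one must verify that the isomorphism $b\circ\ms Tf\cong f\circ a$ extracted from the universal property is genuinely the canonical comparison cell of (ii) attached to $f$ — a diagram chase unwinding how the algebra structure induced by a retraction, and the induced morphism structure, are defined — and, on the foundational side, one should confirm that facts (i) and (ii) about lax-idempotent pseudomonads hold in the \emph{relative} setting of \cref{doctrinesdef}, not merely for ordinary KZ pseudomonads.
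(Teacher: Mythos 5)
Your construction of the candidate structure map $b$ is exactly the paper's: both apply the universal property of the pseudo-pushout to the cocone with legs $f\circ a$ and $\id_{\mc B}$ over the comparison iso $f\circ a\circ\eta_{\mc A}\cong f$, obtaining $b\circ\eta_{\mc B}\cong\id_{\mc B}$ and $b\circ\ms Tf\cong f\circ a$, and both then read off the algebra-morphism condition from the latter iso. Where you diverge is the step establishing $b\dashv\eta_{\mc B}$: you outsource it to the general KZ fact that, for a lax-idempotent (relative) pseudomonad, any pseudo-retraction of the unit is automatically a left adjoint with the given iso as counit. The paper instead builds the unit $2$-cell $\id_{\ms T\mc B}\nt\eta_{\mc B}\circ b$ by hand, using the $2$-dimensional universal property of the same pseudo-pushout: it compares the two $1$-cells $\id$ and $\eta_{\mc B}\circ b$ out of $\ms T\mc B$ by exhibiting compatible $2$-cells after precomposition with $\ms Tf$ (coming from the unit of $a\dashv\eta_{\mc A}$) and with $\eta_{\mc B}$ (an iso), and gets the triangle identities from uniqueness in that universal property. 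Your route is cleaner if one is willing to take facts (i) and (ii) for granted, but as you rightly flag, they need to be verified for lax-idempotent \emph{relative} pseudomonads, and strictly speaking the paper's \cref{doctrinesdef} only gives lax-idempotency indirectly (via the locally fully faithful comparison into $\psh$, as established in the cited background on lex-colimits). The paper's argument is self-contained on exactly this point: by extracting the unit $2$-cell from the pushout it never needs the general ``retraction $\Rightarrow$ adjoint'' lemma. Both proofs are correct; yours trades a local diagram chase for a global appeal to KZ theory.
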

\begin{proof}
  Suppose $\gamma_{\mc A} \dashv \eta_{\mc A} : \ms T\mc A \to \mc A$ makes $\mc A$ a $\ms T$-algebra. Consider the following diagram,
    \[\begin{tikzcd}
    	{\mc A} & {\mc B} \\
    	{\ms T\mc A} & {\ms T\mc B} & {\mc B}
    	\arrow["f", from=1-1, to=1-2]
    	\arrow[""{name=0, anchor=center, inner sep=0}, "{\eta_{\mc A}}", from=1-1, to=2-1]
    	\arrow["{\eta_{\mc B}}", from=1-2, to=2-2]
    	\arrow[equals, from=1-2, to=2-3]
    	\arrow[""{name=1, anchor=center, inner sep=0}, "{\gamma_{\mc A}}", curve={height=-12pt}, from=2-1, to=1-1]
    	\arrow["{\ms Tf}", from=2-1, to=2-2]
    	\arrow["{{f\gamma_A}}"', curve={height=18pt}, from=2-1, to=2-3]
    	\arrow["{\gamma_{\mc B}}"{description}, dashed, from=2-2, to=2-3]
    	\arrow["\dashv"{anchor=center}, draw=none, from=1, to=0]
    \end{tikzcd}\]
  This shows we have a map $\gamma_{\mc B} : \ms T\mc B \to \mc B$ with 
  \[ \gamma_{\mc B} \eta_{\mc B} \cong \id, \quad \gamma_{\mc B}\ms Tf \cong f\gamma_{\mc A}, \]
  thus if $\gamma_{\mc B} \dashv \eta_{\mc B}$ then $\mc B$ will be an $\ms T$-algebra and $f : \mc A \to \mc B$ will be a $\ms T$-algebra morphism. For this, we consider the 2-dimensional universal property of the pushout square, and observe we have two maps $\id,\eta_{\mc B}\gamma_{\mc B} : \ms T\mc B \to \ms T\mc B$. Furthermore, since $\gamma_{\mc A} \dashv \eta_{\mc A}$, there are 2-cells 
  \[ \id \circ \ms Tf \cong \ms Tf \nt \ms Tf \circ \eta_{\mc A}\gamma_{\mc A} \cong \eta_{\mc B} f \gamma_{\mc A} \cong \eta_{\mc B}\gamma_{\mc B} \circ \ms Tf, \]
  and
  \[ \id \circ \eta_{\mc B} \cong \eta_{\mc B} \cong \eta_{\mc B}\gamma_{\mc B} \circ \eta_{\mc B}. \]
  Thus, we get a unique 2-cell $\id \nt \eta_{\mc B}\gamma_{\mc B}$. This 2-cell satisfies the triangle equality again by the universal property of the 2-pushout.
\end{proof}

The upshot is that if $\ms T$ preserves slicing, then $\ms T$-algebras will be closed under the slicing operation, and furthermore, the same universal property holds:

\begin{prop} \label{algslicing}
    If $\ms T$ preserves slicing, then for any $\ms T$-algebra $\mc A$ and $a\in\mc A$, $\mc A/a$ is again a $\ms T$-algebra where $\mc A \to \mc A/a$ is a $\ms T$-algebra morphism. Furthermore, the following is also a cocomma in $\alg(\ms T)$,
    \[\begin{tikzcd}
    	{\ms T\mbb O} & {\mc A} \\
    	{\ms T0} & {\mc A/a}
    	\arrow["\qsi a", from=1-1, to=1-2]
    	\arrow[from=1-1, to=2-1]
    	\arrow[""{name=0, anchor=center, inner sep=0}, from=1-2, to=2-2]
    	\arrow[""{name=1, anchor=center, inner sep=0}, from=2-1, to=2-2]
    	\arrow[curve={height=-6pt}, between={0.2}{0.8}, Rightarrow, from=1, to=0]
    \end{tikzcd}\]
    where here $\qsi a : \ms T\mbb O \to \mc A$ is the left Kan extension of $a : \mbb O \to \mc A$ along $\eta : \mbb O \hook \ms T\mbb O$.
\end{prop}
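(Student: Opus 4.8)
The plan is to treat the two assertions separately. The first is an immediate consequence of the two results just proved: since $\mc A$ is a $\ms T$-algebra it is in particular a small lex category, so \cref{preservelocalisationcocartesian} applies and shows that the pullback functor $s\colon\mc A\to\mc A/a$ is $\ms T$-cocartesian; feeding this into \cref{algdescent} then immediately gives that $\mc A/a$ is a $\ms T$-algebra and that $s$ is a $\ms T$-algebra morphism.

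For the cocomma statement I would proceed by pasting. As $\ms T$ is bounded, $\mc C\mapsto\ms T\mc C$ underlies the free functor $\ms T\colon\lex\to\alg(\ms T)$, left biadjoint to the forgetful functor; being a left biadjoint it preserves all (pseudo-)colimits, and the map $\qsi a$ — the left Kan extension of $a$ along $\eta$ — is by definition the transpose of $a\in\mc A\simeq\lex(\mbb O,\mc A)$ under this biadjunction, hence factors as $\ms T\mbb O\xrightarrow{\ms Ta}\ms T\mc A\xrightarrow{\gamma_{\mc A}}\mc A$ (free functor on $a$, followed by the structure map of $\mc A$). Applying the free functor to the cocomma square of the lemma on slices (which, being a cocomma in $\Lex$, is a fortiori one in $\lex$) yields a cocomma in $\alg(\ms T)$ with corners $\ms T\mbb O,\ms T\mc A,\ms T0,\ms T(\mc A/a)$, top leg $\ms Ta$ and right leg $\ms Ts$. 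Pasting onto its right-hand edge the square
\[
\begin{tikzcd}
	{\ms T\mc A} & {\mc A} \\
	{\ms T(\mc A/a)} & {\mc A/a}
	\arrow["{\gamma_{\mc A}}", from=1-1, to=1-2]
	\arrow["{\ms Ts}"', from=1-1, to=2-1]
	\arrow["{\gamma_{\mc A/a}}", from=1-2, to=2-2]
	\arrow["s"', from=2-1, to=2-2]
\end{tikzcd}
\]
— which commutes up to the coherent isomorphism $\gamma_{\mc A/a}\circ\ms Ts\cong s\circ\gamma_{\mc A}$ produced in the proof of \cref{algdescent} — gives a rectangle whose top leg is $\gamma_{\mc A}\circ\ms Ta=\qsi a$, whose right leg is $s$, and whose bottom leg is the canonical map $\ms T0\to\mc A/a$ ($\ms T0$ being the initial object of $\alg(\ms T)$, since the free functor preserves the initial object of $\lex$); this rectangle is exactly the square in the statement. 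By the pasting law for cocommas and pushouts already used in the proof of \cref{preservelocalisationcocartesian} — a cocomma pasted with a pushout along a common edge is again a cocomma — it now suffices to prove that the displayed square is a \emph{pseudo-pushout} in $\alg(\ms T)$.

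This last point is the only substantial step, and it is here that $\ms T$-cocartesianness of $s$ is used once more. Mapping the square into an arbitrary $\ms T$-algebra $\mc B$, and using the free-algebra equivalence $\alg(\ms T)(\ms T\mc C,\mc B)\simeq\lex(\mc C,\mc B)$ together with the triangle identity (which identifies the precomposition maps $\gamma_{(-)}^{*}$ with the forgetful functor $U$ on hom-categories), one reduces to showing that the comparison square relating $\alg(\ms T)(\mc A/a,\mc B)$, $\alg(\ms T)(\mc A,\mc B)$, $\lex(\mc A/a,\mc B)$ and $\lex(\mc A,\mc B)$ is a pseudo-pullback of categories. Concretely: a lex functor $h\colon\mc A/a\to\mc B$ underlies a $\ms T$-algebra morphism precisely when $h\circ s$ does, and then the algebra-morphism structure on $h$ is recovered uniquely (up to coherent isomorphism) from the one on $h\circ s$ — the required comparison $2$-cell $h\circ\gamma_{\mc A/a}\cong\gamma_{\mc B}\circ\ms Th$ is assembled, via the universal property of the pushout $\ms T(\mc A/a)\simeq\ms T\mc A+_{\mc A}\mc A/a$, from the given $2$-cell over $\ms T\mc A$ and the canonical one over $\mc A/a$, and the pseudo-algebra coherence axioms are checked by the same universal property. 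Crucially, that pushout is nothing but the unit square of $s$, which is a pushout exactly because $s$ is $\ms T$-cocartesian; the action on $2$-cells is treated identically.

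I expect this verification to be the main obstacle: it is conceptually transparent but requires the customary amount of bookkeeping with pseudo-algebra coherence data, so in practice one might prefer to isolate it as a separate lemma — ``the counit naturality square at a $\ms T$-cocartesian morphism is a pseudo-pushout in $\alg(\ms T)$'' — and then invoke it here as a black box. Everything else (preservation of the cocomma by the free functor, the factorisation of $\qsi a$, and the pasting law) is formal.
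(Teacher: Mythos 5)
Your proof is correct, and its first half coincides with the paper's, which likewise obtains the algebra structure on $\mc A/a$ by feeding \cref{preservelocalisationcocartesian} into \cref{algdescent}. For the cocomma statement you take a genuinely different route. The paper argues directly on a cocone: given an algebra morphism $f:\mc A\to\mc X$ and a section $x:1\to fa$, the underlying $\Lex$-cocomma produces $\pair{f,x}:\mc A/a\to\mc X$, and the only thing left to check is that this lex functor is an algebra morphism; the paper does this by a short explicit computation, using the equivalence $\ms T(\mc A/a)\simeq\ms T\mc A/\eta a$ supplied by preservation of slicing to identify $\ms T\pair{f,x}\cong\pair{\ms Tf,\ms Tx}$ and then chaining isomorphisms to get $\gamma_{\mc X}\ms T\pair{f,x}\cong\pair{f,x}\gamma_{\mc A/a}$. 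You instead paste the image of the $\Lex$-cocomma under the free functor with the counit naturality square at $s:\mc A\to\mc A/a$, reducing everything to the claim that this counit square is a pseudo-pushout in $\alg(\ms T)$, which you extract from $\ms T$-cocartesianness of $s$ by a representability argument. Both proofs rest on the same input (preservation of slicing, in the equivalent guise of \cref{preservelocalisationcocartesian}) and both come down to the assertion that a lex functor out of $\mc A/a$ whose restriction along $s$ is an algebra morphism is itself one; the paper establishes this only for the particular map $\pair{f,x}$ by exploiting the concrete description of $\ms T(\mc A/a)$, which keeps the computation to a few lines, whereas your version proves it in general and thereby isolates a reusable lemma (counit squares at $\ms T$-cocartesian maps are pushouts of algebras) at the cost of the pseudo-algebra coherence bookkeeping you correctly flag; your sketch of that verification via the two-dimensional universal property of the unit pushout is sound. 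One minor slip: in your displayed counit square the labels on the right and bottom edges are interchanged --- the map $\mc A\to\mc A/a$ is $s$ and the map $\ms T(\mc A/a)\to\mc A/a$ is $\gamma_{\mc A/a}$ --- but the commutativity $\gamma_{\mc A/a}\circ\ms Ts\cong s\circ\gamma_{\mc A}$ you state in words is the intended one.
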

\begin{proof}
    The first half of the statement is a corollary of~\cref{preservelocalisationcocartesian} and~\cref{algdescent}. To show the above square is a cocomma, suppose we have a $\ms T$-algebra morphism $f : \mc A \to \mc X$ with a 2-cell $x : 1 \to fa$. It suffices to show the uniquely determined map $\pair{f,x} : \mc A/a \to \mc X$ below is a $\ms T$-algebra morphism,
    \[\begin{tikzcd}
    	{\mc A} & X \\
    	{\mc A/a}
    	\arrow["f", from=1-1, to=1-2]
    	\arrow["l"', from=1-1, to=2-1]
    	\arrow["{\pair{f,x}}"', dashed, from=2-1, to=1-2]
    \end{tikzcd}\]
    For this we observe that $\ms T$-preserves slicing implies that there is an equivalence $\ms T(\mc A/a) \simeq \ms T\mc A/\eta a$, and under this equivalence,
    \[ \ms T\pair{f,x} \cong \pair{\ms Tf,\ms Tx} : \ms T(\mc A/a) \simeq \ms T\mc A/\eta a \to \mc X. \]
    Thus, it follows that 
    \[ \gamma_{\mc X}\ms T\pair{f,x} \cong \gamma_{\mc X} \circ \pair{\ms Tf,\ms Tx} \cong \pair{\gamma_{\mc X}\ms Tf,\gamma_{\mc X}\ms Tx} \cong \pair{f\gamma_{\mc A},x\gamma_{\mbb O}} \cong \pair{f,x} \circ \gamma_{\mc A/a} \]
    This implies $\pair{f,x}$ will be a $\ms T$-algebra morphism.
\end{proof}

\begin{rem}\label{slicepushout}
    Suppose $\ms T$ preserves slicing. For any $f : \mc A \to \mc B$ in $\alg(\ms T)$ and $a\in\mc A$, by the universal property above, the following will be a 2-pushout in $\alg(\ms T)$.
    \[\begin{tikzcd}
    	{\mc A} & {\mc B} \\
    	{\mc A/a} & {\mc B/fa}
    	\arrow["f", from=1-1, to=1-2]
    	\arrow[from=1-1, to=2-1]
    	\arrow[from=1-2, to=2-2]
    	\arrow["{f/X}"', from=2-1, to=2-2]
    	\arrow["\lrcorner"{anchor=center, pos=0.125, rotate=180}, draw=none, from=2-2, to=1-1]
    \end{tikzcd}\]
\end{rem}

\subsubsection{A propositional bootstrap}
Finally we can prove a first instange of a \textit{propositional bootstrap}: we observe that the interpolation property for a doctrine $\ms T$ on $\Lex$ preserving slicing can be reduced to the interpolation of subobject lattices on $1$.

\begin{prop}[Propositional bootstrap]\label{reducetosub1}
    Let $\ms T$ be a doctrine on lex categories which preserves slicing. Then it has the interpolation property iff for any cocomma square in $\alg(\ms T)$, the image under the 2-functor $\sub_{-}(1)$ has interpolation in the sense of~\cref{interpos}.
\end{prop}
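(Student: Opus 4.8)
I would prove the two implications separately; $(\Rightarrow)$ is a specialisation and $(\Leftarrow)$ is the substance, powered by preservation of slicing. For $(\Rightarrow)$, assume $\ms T$ has the interpolation property and let a cocomma square $\mc C,\mc A,\mc B,\mc D$ (legs $f,g,u,v$, $2$-cell $\alpha : vg \nt uf$) in $\alg(\ms T)$ be given. It is in particular a lax square, and $\sub_{-}(1)$ sends lax squares to lax squares of posets: a $\ms T$-algebra morphism preserves $1$ and monos, and a $2$-cell $\theta : h_0 \nt h_1$ gives $h_0 \le h_1$ on $\sub(1)$ by naturality of $\theta$ against the monos into $1$. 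Now apply the hypothesis at $X = 1 \in \mc C$: lex functors preserve the terminal object, so $f1 \cong 1$, $uf1 \cong 1 \cong vg1$, and $\alpha_1 : vg1 \to uf1$ is the identity of $1$, whence $\alpha_1^{*} = \id$; thus the poset square of~\cref{interpcat} at $X = 1$ is exactly the $\sub_{-}(1)$-image of our cocomma square, and it has interpolation. This settles $(\Rightarrow)$.

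\textbf{The slicing reduction for $(\Leftarrow)$.} Assume the $\sub_{-}(1)$-image of every cocomma square in $\alg(\ms T)$ has interpolation, and fix a cocomma square as above and an object $X \in \mc C$; we must show the poset square of~\cref{interpcat} at $X$ has interpolation. The plan is to exhibit that poset square as the $\sub_{-}(1)$-image of a cocomma square of slices. Recall the elementary identity $\sub_{\mc A/a}(1) \cong \sub_{\mc A}(a)$ for any lex category $\mc A$ and $a \in \mc A$, since the terminal object of $\mc A/a$ is $\id_a$ and a subobject of $\id_a$ in $\mc A/a$ is the same as a subobject of $a$ in $\mc A$. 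As $\ms T$ preserves slicing, \cref{algslicing,slicepushout} make $\mc C/X,\mc A/fX,\mc B/gX,\mc D/ufX,\mc D/vgX$ into $\ms T$-algebras with $\ms T$-algebra morphisms $f/X : \mc C/X \to \mc A/fX$, $g/X : \mc C/X \to \mc B/gX$, $u/fX : \mc A/fX \to \mc D/ufX$, $v/gX : \mc B/gX \to \mc D/vgX$; moreover, applying \cref{algslicing} to the $\ms T$-algebra $\mc D/ufX$ and its object $(vgX,\alpha_X)$, for which $(\mc D/ufX)/(vgX,\alpha_X) \cong \mc D/vgX$, the pullback functor $\alpha_X^{*} : \mc D/ufX \to \mc D/vgX$ along $\alpha_X$ is also a $\ms T$-algebra morphism. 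Put $\bar u := \alpha_X^{*}\circ(u/fX)$ and $\bar v := v/gX$. Naturality of $\alpha$ against each $c \to X$ supplies a canonical arrow $vgc \to ufc \times_{ufX} vgX$ over $vgX$, natural in $(c\to X)$, i.e.\ a $2$-cell $\bar\alpha : \bar v\circ(g/X) \nt \bar u\circ(f/X)$, giving a lax square in $\alg(\ms T)$
\[\begin{tikzcd}
	{\mc C/X} & {\mc A/fX} \\
	{\mc B/gX} & {\mc D/vgX}
	\arrow["{f/X}", from=1-1, to=1-2]
	\arrow["{g/X}"', from=1-1, to=2-1]
	\arrow[""{name=0, anchor=center, inner sep=0}, "{\bar u}", from=1-2, to=2-2]
	\arrow[""{name=1, anchor=center, inner sep=0}, "{\bar v}"', from=2-1, to=2-2]
	\arrow["{\bar\alpha}", Rightarrow, from=1, to=0]
\end{tikzcd}\]
Applying $\sub_{-}(1)$ and the identity $\sub_{\mc A/a}(1) \cong \sub_{\mc A}(a)$ throughout recovers exactly the poset square of~\cref{interpcat} at $X$: the map induced by $\alpha_X^{*}$ on subobjects of $1$ is the pullback $\alpha^{*} : \sub_{\mc D}(ufX) \to \sub_{\mc D}(vgX)$, and the $\sub_{-}(1)$-image of $\bar\alpha$ is the inequality $vgU \le \alpha^{*}ufU$.

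\textbf{The cocomma claim, and the main obstacle.} It remains to show the displayed square of slices is again a cocomma in $\alg(\ms T)$; granting this, the hypothesis applies and we are done. One checks the universal property directly: by \cref{algslicing} a $\ms T$-algebra morphism $\mc D/vgX \to \mc E$ is a pair $(w_0 : \mc D \to \mc E,\ \rho : 1 \to w_0(vgX))$, morphisms out of $\mc A/fX$, $\mc B/gX$ unwind to $(p_0,s:1\to p_0fX)$ and $(q_0,t:1\to q_0gX)$, and — via the coinserter presentation of slices in~\cref{slicecoinserter} — a $2$-cell between morphisms out of $\mc C/X$ unwinds to a $2$-cell on $\mc C$ compatible with the relevant global sections. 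Under this dictionary, a cocone under the square of slices on $\mc E$ is a cocone $(p_0,q_0,\beta_0)$ under $\mc A \xleftarrow{f}\mc C\xrightarrow{g}\mc B$ together with a section $t : 1 \to q_0 gX$ (with $s$ forced to be $(\beta_0)_X\circ t$); the cocomma property of $\mc D$ turns $(p_0,q_0,\beta_0)$ into a unique $w_0 : \mc D \to \mc E$ with $w_0u\cong p_0$, $w_0v\cong q_0$, $w_0\star\alpha=\beta_0$, and $t$ becomes the section $\rho:1\to w_0(vgX)\cong q_0gX$, yielding the unique factorisation $\langle w_0,\rho\rangle$; that this restricts correctly along $\bar u,\bar v$, together with uniqueness, are routine chases using that $\ms T$ commutes with the relevant pullbacks. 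This last step — that slicing a cocomma square at an object of $\mc C$ yields a cocomma square in $\alg(\ms T)$ with corner $\mc D/vgX$ — is the main obstacle. Morally it is an instance of ``colimits commute with colimits'' (the slice being the cocomma of $\mc A \leftarrow \ms T\mbb O \to \ms T 0$ by \cref{algslicing}), but it is not quite formal: because the two legs of the original cocomma are identified only up to $\alpha$, the corner must be sliced at $vgX$ rather than $ufX$ for $\bar\alpha$ to point the right way, and one must push the bookkeeping of global sections and of $\alpha$ through the universal property by hand. Everything else — the identity $\sub_{\mc A/a}(1)\cong\sub_{\mc A}(a)$, the recognition of the $\sub_{-}(1)$-image, and the $X=1$ specialisation for $(\Rightarrow)$ — is routine.
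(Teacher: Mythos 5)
Your proof is correct and follows essentially the same route as the paper: $(\Rightarrow)$ by specialising to $X=1$, and $(\Leftarrow)$ by forming the square of slices with corner $\mc D/vgX$ and recognising its image under $\sub_{-}(1)$ as the poset square of \cref{interpcat}. The one step you flag as the main obstacle is dispatched more economically in the paper: by \cref{slicepushout} the two faces $(\mc C,\mc A,\mc C/X,\mc A/fX)$ and $(\mc B,\mc D,\mc B/gX,\mc D/vgX)$ of your cube are 2-pushouts, so pasting them against the back cocomma and cancelling forces the front face to be a cocomma, with no need to unwind the global-section bookkeeping by hand.
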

\begin{proof}
    The only if direction is evident: Consider a cocomma square in $\alg(\ms T)$,
    \[\begin{tikzcd}
    	\mc C & \mc A \\
    	\mc D & \mc B
    	\arrow["f", from=1-1, to=1-2]
    	\arrow["g"', from=1-1, to=2-1]
    	\arrow[""{name=0, anchor=center, inner sep=0}, "u", from=1-2, to=2-2]
    	\arrow[""{name=1, anchor=center, inner sep=0}, "v"', from=2-1, to=2-2]
    	\arrow["\alpha", curve={height=-6pt}, between={0.2}{0.8}, Rightarrow, from=1, to=0]
    \end{tikzcd}\]
    When $X\in\mc C$ is taken to be the terminal object $1$, \cref{interpcat} exact expresses that its image under $\sub_{-}(1)$ has interpolation.

    For the if direction, given a cocomma square as above, for any $X\in\mc C$ consider the following diagram,
    \[\begin{tikzcd}
    	{\mc C} && {\mc A} \\
    	& {\mc C/X} && {\mc A/fX} \\
    	{\mc B} && {\mc D} \\
    	& {\mc B/gX} && {\mc D/vgX}
    	\arrow["f", from=1-1, to=1-3]
    	\arrow[from=1-1, to=2-2]
    	\arrow["g"', from=1-1, to=3-1]
    	\arrow[from=1-3, to=2-4]
    	\arrow[""{name=0, anchor=center, inner sep=0}, "u"{description, pos=0.2}, from=1-3, to=3-3]
    	\arrow[from=2-2, to=2-4]
    	\arrow[from=2-2, to=4-2]
    	\arrow[""{name=1, anchor=center, inner sep=0}, from=2-4, to=4-4]
    	\arrow[""{name=2, anchor=center, inner sep=0}, "v"{description, pos=0.2}, from=3-1, to=3-3]
    	\arrow[from=3-1, to=4-2]
    	\arrow[from=3-3, to=4-4]
    	\arrow[""{name=3, anchor=center, inner sep=0}, from=4-2, to=4-4]
    	\arrow["\alpha"{description}, between={0.2}{0.8}, Rightarrow, from=2, to=0]
    	\arrow["\alpha"{description, pos=0.4}, between={0.2}{0.8}, Rightarrow, from=3, to=1]
    \end{tikzcd}\]
    By~\cref{slicepushout}, both the top and bottom squares are pushouts in $\alg(\ms T)$. Thus the front square is again a cocomma. Now it is easy to see that the interpolation property expressed in~\cref{interpcat} for $X\in\mc C$ exactly says that the image of the front lax square under $\sub_{-}(1)$ has interpolation.
\end{proof}

\section{Finitary doctrines, conservative maps and interpolation} \label{sec:finitary}

The goal of this section is to prove that \textit{finitary} doctrines that preserve slicing admitting interpolation have a very satisfactory characterization (\Cref{tstableinterpolation}) in terms of truth conservative maps (\Cref{tcons}). This is very much in the same spirit of traditional algebraic logic investigation of interpolation, showing that the interpolation property of a fragment of logic will be equivalent to a certain structural property in the corresponding category of algebras. Even more, our analysis mirrors quite closely the correspondence between interpolation and the so called \textit{amalgamation property} of the class of algebras. We discuss this connection in \Cref{hoog}.

\subsection{Finitary doctrines, filter quotients, and $t$-conservative maps}

Let $\ms T$ be a doctrine on lex categories preserving slicing. Recall from \cite{di2025bi} that such a doctrine is called \emph{finitary} when $\ms T$ preserves filtered pseudocolimits. In this section we shall assume to work with finitary doctrines. In this case, we can construct an \emph{orthogonal factorisation system} in $\alg(\ms T)$. The left class will be (a subclass of) \emph{localisations} as defined below, while the right class will be given by truth conservative maps, which we shall define later.

\begin{defn}[Localisation]
    Let $\ms T$ be a finitary doctrine closed under slices. Let $\mc A\in\alg(\ms T)$. A \emph{localisation} of $\mc A$ at a cofiltered diagram $f : \mc I \to \mc A$ is the following filtered colimit in $\alg(\ms T)$,
    \[ \mc A_f := \ct_{i\in\mc I}\mc A/fi. \]
    The finitary assumption on $\ms T$ makes sure this filtered pseudocolimit in $\alg(\ms T)$ exists, and is created by the forgetful functor into $\Cat$ (or equivalently $\Lex$).
\end{defn}

The above notion of localisation unifies a range of different constructions.

\begin{exa}[Slices are localisations]
    By definition, for any $\mc A\in\alg(\ms T)$ and $X\in\mc A$, the slice $\mc A \to \mc A/X$ is a localisation of $\mc A$. As mentioned in~\cref{logicslice}, this corresponds to adding a finite family of constants satisfying certain formula.
\end{exa}

\begin{exa}[Filter quotients are localisations]
    Let $F$ be a filter on the meet-semi-lattice $\sub_{\mc A}(1)$. This induces a localisation
    \[ \mc A \surj \mc A_F \simeq \ct_{u\in F}\mc A/u. \]
    Logically, this corresponds to the quotient theory of adding all the closed formulas in $F$ as axioms.
\end{exa}

\begin{exa}[Localisation at a structure/point]\label{exa:localisationatpoint}
    Let $M : \mc A \to \Set$ be a left exact functor. We have a projection $\elem M \to \mc A$ where $\elem M$ is the category of elements of $M$, and is cofiltered since $M$ is left exact. We define the \emph{localisation of $\mc A$ at $M$} as follows,
    \[ \mc A_M \simeq \ct_{(a,x) \in (\elem M)\op} \mc A/a. \]
    Alternatively, consider the pseudofunctor
    $ \mc A/- : \mc A\op \to \alg(\ms T).$
    Then one can show that $\mc A_M$ is the $M$-weighted colimit of $\mc A/-$, which means that for any $\mc E \in \alg(\ms T)$,
    \[ \alg(\ms T)(\mc A_M,\mc E) \simeq [\mc A,\Cat](M,\alg(\ms T)(\mc A/-,\mc E)). \]
\end{exa}

\begin{rem}
    For pretopoi or topoi, the special case of localisation at a \emph{model} $M : \mc A \to \Set$, in the sense that it is a $\ms T$-algebra homormophism, has been considered in the literature~\cite{johnstone1989local,caramello2023over,breiner2014scheme}. Logically, this corresponds to the so called \emph{theory of diagram} of a model (see e.g.~\cite{chang1990model}), i.e.\ add a constant for each element in the model, together with all closed formulas satisfied by the model as axioms.
\end{rem}

\begin{rem}
    Note that the slices are a special case of localisation at a structure, since $\mc A/X \simeq \mc A_{\yo^X}$ for the corepresentable functor $\yo^X = \mc A(X,-) : \mc A \to \Set$.
\end{rem}

The right class we are going to consider are those maps satisfying certain weak conservativity properties:

\begin{defn} \label{tcons}
    Let $f : \mc A \to \mc B$ be a map in $\Lex$. We say it is \emph{truth conservative}, or \emph{t-conservative} in short, if it reflects truth of subterminal object, i.e.\ for any $u \in \sub_{\mc A}(1)$, $fu = 1$ iff $u = 1$.
\end{defn}

\begin{rem}\label{fcons}
    If the doctrine $\ms T$ also provides a bottom element $0$, then we may say a map $f : A \to b$ is \emph{falsum conservative}, or \emph{f-conservative}, if it reflects $0$.
\end{rem}

\begin{lem}\label{tconmono}
    In a diagram below, if $f$ is t-conservative, then so is $g$:
    \[
    \begin{tikzcd}
        \mc A \ar[dr, "f"] \ar[d, "g"'] \\
        \mc B \ar[r, "h"'] & \mc C
    \end{tikzcd}
    \]
\end{lem}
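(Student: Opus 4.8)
The plan is to unwind the definition of t-conservativity and exploit the (pseudo-)commutativity $f\cong hg$ directly, with no real machinery needed. Recall that $g$ and $h$ are $1$-cells in $\Lex$, hence left exact functors; in particular each preserves the terminal object and monomorphisms, so each induces an order-preserving map between posets of subterminal objects. The commuting triangle then yields, up to coherent isomorphism, a factorisation $\sub_{\mc A}(1)\xrightarrow{\;g\;}\sub_{\mc B}(1)\xrightarrow{\;h\;}\sub_{\mc C}(1)$ of the map $\sub_{\mc A}(1)\to\sub_{\mc C}(1)$ induced by $f$. The point is to read ``$fu=1$'' and ``$gu=1$'' as equalities of subobjects of the respective terminal objects, and to transport them along this factorisation.

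Concretely, I would argue as follows. The implication $u=1\Rightarrow gu=1$ is automatic, since $g$ preserves the terminal object. For the reverse, let $u\in\sub_{\mc A}(1)$ with $gu=1$ in $\sub_{\mc B}(1)$. Apply $h$: since $h$ preserves the terminal object we have $h1\cong 1$, hence $h(gu)=h1=1$ in $\sub_{\mc C}(1)$. Because $f\cong hg$, this says precisely $fu=1$. By t-conservativity of $f$ we conclude $u=1$. Thus $g$ reflects truth of subterminal objects, i.e.\ $g$ is t-conservative.

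I do not expect any genuine obstacle here; the statement is essentially a right-cancellation observation for the class of t-conservative maps, which is exactly the sort of closure one wants when assembling a factorisation system. The only points deserving a word of care are that the triangle commutes only up to a natural isomorphism — harmless, since isomorphic left exact functors induce the same maps on subobject posets — and that everything should be phrased in terms of the induced maps on $\sub(-)(1)$, so that the shorthand ``$=1$'' is consistently interpreted as an isomorphism of subobjects onto the terminal object rather than as an equality of objects.
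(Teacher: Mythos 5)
Your argument is correct and is exactly the routine unwinding the paper has in mind (its own proof just reads ``This is trivial''): apply $h$ to $gu=1$, use $f\cong hg$ and preservation of the terminal object to get $fu=1$, and conclude by t-conservativity of $f$. Nothing further is needed.
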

\begin{proof}
    This is trivial.
\end{proof}

\begin{prop}\label{factorisation}
    For a finitary doctrine $\ms T$ preserving slicing, the pair (filter quotients, t-conservative maps) in $\alg(\ms T)$ forms an orthogonal factorisation system.
\end{prop}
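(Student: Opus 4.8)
The plan is to check the three defining conditions of a (2-categorical) orthogonal factorisation system on $\alg(\ms T)$: that both classes contain the equivalences and are closed under composition; that every $1$-cell factors as a filter quotient followed by a t-conservative map; and that every filter quotient is left orthogonal to every t-conservative map, in the sense that the category of fillers is contractible. The closure conditions are routine and I would dispatch them quickly: t-conservative maps contain the equivalences and are closed under composition directly from \cref{tcons}; filter quotients contain the equivalences because $\mc A_{\{1\}}\simeq\mc A$, and are closed under composition because — once we know (see below) that $q_F\colon\sub_{\mc A}(1)\to\sub_{\mc A_F}(1)$ is surjective — a composite $\mc A\to\mc A_F\to(\mc A_F)_G$ is identified, via its universal property, with the filter quotient of $\mc A$ at $G'=q_F^{-1}(G)$. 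So the substance lies in the factorisation and the orthogonality, and both rest on a single clean reformulation of the universal property of a filter quotient.

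The key lemma I would isolate first is this: for a finitary $\ms T$ preserving slicing, a filter $F$ on $\sub_{\mc A}(1)$, and any $\mc E\in\alg(\ms T)$, precomposition with $q_F\colon\mc A\to\mc A_F$ is an equivalence of $\alg(\ms T)(\mc A_F,\mc E)$ onto the full subcategory of $\alg(\ms T)(\mc A,\mc E)$ spanned by the \emph{$F$-inverting} homomorphisms, i.e.\ those $g$ with $gu=1$ for every $u\in F$; in particular $q_F$ is the universal $F$-inverting map. This is assembled from \cref{algslicing}, which identifies a homomorphism out of $\mc A/u$ with a homomorphism $g$ out of $\mc A$ together with a global section of $gu$, hence — $u$ being subterminal — simply with a $g$ such that $gu=1$; combined with the fact that $\mc A_F=\ct_{u\in F}\mc A/u$ is a filtered pseudocolimit created in $\Cat$ (this is where finitariness is used), so that $\alg(\ms T)(\mc A_F,\mc E)$ is the corresponding filtered limit, which for these nested full subcategories is just their intersection.

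For the factorisation, given $f\colon\mc A\to\mc B$ I would set $F_f:=\{\,u\in\sub_{\mc A}(1)\mid fu=1\,\}$; this is a filter since lex functors preserve binary meets and the top element of a subterminal lattice, and it is upward closed. By construction $f$ is $F_f$-inverting, so the lemma factors it as $\mc A\xrightarrow{q_{F_f}}\mc A_{F_f}\xrightarrow{\bar f}\mc B$ with $q_{F_f}$ a filter quotient, and it remains to see $\bar f$ is t-conservative. This forces the one genuinely hands-on computation of the proof: analysing $\sub_{\mc A_F}(1)$ from the concrete filtered-colimit presentation. At a late enough stage $\mc A/u$ (with $u\in F$), any subterminal of $\mc A_F$ is represented by an honest mono into the terminal $\id_u$ of $\mc A/u$, i.e.\ by a subobject $s\hookrightarrow u$ in $\mc A$; composing with $u\hookrightarrow 1$ presents $s$ as a subterminal of $\mc A$, and since $s\le u$ the pullback functor sends $s$ to $(s\hookrightarrow u)$, so the subterminal we started with is exactly $q_F(s)$. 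Hence $q_F$ is surjective on subterminals, and the same bookkeeping shows $q_F(s)=1$ iff $u'\le s$ for some $u'\in F$, i.e.\ iff $s\in F$. Applying this with $F=F_f$: any $w\in\sub_{\mc A_{F_f}}(1)$ is $q_{F_f}(s)$ for some $s\in\sub_{\mc A}(1)$, and $\bar f(w)=fs$, so $\bar f(w)=1\iff fs=1\iff s\in F_f\iff q_{F_f}(s)=1\iff w=1$; thus $\bar f$ is t-conservative.

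Orthogonality is then formal. Given a square with a filter quotient $q_F\colon\mc A\to\mc A_F$ on the left, a t-conservative $r\colon\mc C\to\mc D$ on the right, top edge $g$ and bottom edge $h$: for $u\in F$ we have $q_F(u)=1$, so $r(gu)\cong h(q_F u)=1$, and t-conservativity of $r$ forces $gu=1$; thus $g$ is $F$-inverting and the lemma yields an essentially unique $d\colon\mc A_F\to\mc C$ with $dq_F\cong g$. That $d$ fills the square, $rd\cong h$, follows because $rd$ and $h$ both restrict along $q_F$ to $rg\cong hq_F$ and the restriction functor is fully faithful; the same full faithfulness makes the category of fillers contractible. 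I expect the subterminal analysis of $\mc A_F$ in the factorisation step — establishing that $q_F$ is surjective on subterminals and inverts exactly $F$ — to be the main obstacle, since it is the only place where one must descend to the explicit colimit presentation rather than argue from the universal property; once it is in hand, both the factorisation and the orthogonality fall out formally.
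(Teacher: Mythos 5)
Your proposal is correct and follows essentially the same route as the paper: factor $f$ through the filter quotient at the filter of subterminals inverted by $f$, and obtain orthogonality from the universal property of the quotient together with t-conservativity of the right-hand map. The only difference is one of detail — you explicitly verify, via the colimit presentation of $\mc A_F$, that $q_F$ is surjective on subterminals and inverts exactly $F$ (hence that the second factor is t-conservative), a point the paper's proof dispatches with ``by construction''.
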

\begin{proof}
    Given any map $f : \mc A \to \mc B$ in $\alg(\ms T)$, we may factor it as below,
    \[
    \begin{tikzcd}
        \mc A \ar[dr, "q"'] \ar[rr, "f"] & & \mc B \\ 
        & \mc A/{f\inv(1)} \ar[ur, "i"']
    \end{tikzcd}
    \]
    where $f\inv(1)$ is the filter on $\sub_{\mc A}(1)$ that are mapped to $1$ by $f$. By construction, $\mc A \to \mc A/f\inv(1)$ is a filter quotient, and $A/{f\inv(1)} \to B$ is t-conservative.

    Now consider a square as below where $\mc A \to \mc A/F$ is a filter quotient at $F$ and $g$ is t-conservative,
    \[\begin{tikzcd}
    	\mc A & \mc B \\
    	{\mc A/F} & \mc C
    	\arrow["f", from=1-1, to=1-2]
    	\arrow[two heads, from=1-1, to=2-1]
    	\arrow["g", from=1-2, to=2-2]
    	\arrow[dashed, from=2-1, to=1-2]
    	\arrow["h"', from=2-1, to=2-2]
    \end{tikzcd}\]
    To show the dashed diagonal morphism exists in $\alg(\ms T)$, by the universal property of the quotient $\mc A/F$, it suffices to show that $fu = 1$ for all $u\in F$. But we now that $gfu = 1$ for all $u\in F$ since $gf$ factors through $\mc A/F$. Since $g$ is t-conservative, it follows that $fu = 1$ for all $u\in F$.
\end{proof}

\begin{rem}[A closely related orthogonal factorisation system] \label{Axel}
    \cite[Ch. 10]{osmond2021categorical} has constructed a closely related orthogonal factorisation system on $\Lex$. In particular, the construction in~\cref{exa:localisationatpoint} has appeared in \emph{loc. cit.\ }, under the name of \emph{focalisation}. Osmond shows the existence of an orthogonal factorisation system (focalisation, terminally connected) on $\Lex$, where a lex functor is terminally connected iff it \emph{lifts global sections functorially}. The factorisation system appeared in~\cref{factorisation} can be seen as a \emph{propositional counterpart}: instead of all localisations (focalisations) we only consider the \emph{filter} quotients, which are filtered colimits of slices of \emph{subterminal objects}; and instead of considering terminally connected functors, we consider t-conservative ones which only lifts global sections of \emph{subterminal objects}. The reason we do this is due to the propositional bootstrap result presented in~\cref{reducetosub1}. \cite{osmond2021categorical} has also observed that the same factorisation system lifts to one on the 2-categories of regular and coherent categories, but~\cref{factorisation} provides a structural reason for this: almost the same proof can be used to show that if a doctrine $\ms T$ preserves slices, then it also has a (localisation, terminally connected) orthogonal factorisation system.
\end{rem}

\subsection{Finitary doctrines closed under slices and interpolation}

At the end of this section, our goal is to show that for any finitary doctrine $\ms T$ preserving slicing, the interpolation property of $\ms T$ can be equivalently expressed as certain \emph{exactness property} of $\ms T$-algebras.

\begin{defn}
    We say a class of maps $\mc M$ is \emph{closed under cocommas} in $\alg(\ms T)$, if given any cocomma square as below, if $g\in\mc M$ then $u \in \mc M$.
    \[\begin{tikzcd}
    	{\mc C} & {\mc A} \\
    	{\mc B} & {\mc D}
    	\arrow["f", from=1-1, to=1-2]
    	\arrow["g"', from=1-1, to=2-1]
    	\arrow[""{name=0, anchor=center, inner sep=0}, "u", from=1-2, to=2-2]
    	\arrow[""{name=1, anchor=center, inner sep=0}, "v"', from=2-1, to=2-2]
    	\arrow[curve={height=-6pt}, between={0.2}{0.8}, Rightarrow, from=1, to=0]
    \end{tikzcd}\]
\end{defn}

\begin{rem}
    Since cocomma squares have a direction, there are two potential notions for a class of maps to be closed under cocomma. We might say $\mc M$ is \emph{op-closed} under cocommas if in a cocomma square as above, when $f\in\mc M$ then $u\in\mc M$.
\end{rem}

\begin{thm}\label{tstableinterpolation}
    Let $\ms T$ be a finitary doctrine on lex categories preserving slicing. It has the interpolation property iff t-conservative maps are closed under cocomma in $\alg(\ms T)$.
\end{thm}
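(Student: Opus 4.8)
\emph{Proof plan.}
Since $\ms T$ preserves slicing, the propositional bootstrap \cref{reducetosub1} lets me replace the interpolation property by the following: for every cocomma square in $\alg(\ms T)$, its image under the $2$-functor $\sub_{-}(1)$ has interpolation in the sense of \cref{interpos}. So fix a cocomma square with apex $\mc C$, legs $f : \mc C \to \mc A$ and $g : \mc C \to \mc B$, cocomma $\mc D$ with structural maps $u : \mc A \to \mc D$, $v : \mc B \to \mc D$ and $2$-cell $\alpha : vg \Rightarrow uf$. As algebra morphisms are lex we have $f1 \cong g1 \cong 1$ and $\alpha_1 = \id_1$, so the induced square on $\sub_{-}(1)$ is the lax square with top composite $\sub_{\mc C}(1)\xrightarrow{f}\sub_{\mc A}(1)\xrightarrow{u}\sub_{\mc D}(1)$, bottom composite $\sub_{\mc C}(1)\xrightarrow{g}\sub_{\mc B}(1)\xrightarrow{v}\sub_{\mc D}(1)$, and $vg \le uf$ pointwise; I must decide when it interpolates.

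\emph{Forward direction.} Assume $g$ is t-conservative and $ua = 1$ for some $a \in \sub_{\mc A}(1)$. Applying interpolation of the $\sub_{-}(1)$-square to the pair $1 \in \sub_{\mc B}(1)$ and $a$ (legitimate since $v1 = 1 \le 1 = ua$) yields $c \in \sub_{\mc C}(1)$ with $1 \le gc$ and $fc \le a$. Then $gc = 1$, so $c = 1$ by t-conservativity of $g$, hence $a \ge fc = f1 = 1$. Thus $u$ is t-conservative; this is all that is needed.

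\emph{Converse.} Assume t-conservative maps are closed under cocomma, and fix $b \in \sub_{\mc B}(1)$, $a \in \sub_{\mc A}(1)$ with $vb \le ua$. Set $C_0 := \scomp{c \in \sub_{\mc C}(1)}{b \le gc}$ and $F := \scomp{w \in \sub_{\mc A}(1)}{\exists c \in C_0,\ fc \le w}$; since $f,g$ preserve finite meets of subterminals, $C_0$ and $F$ are filters, $F$ is generated by $f(C_0)$, and as $F$ is up-closed the goal is exactly to prove $a \in F$. Consider the filter quotients $\mc B \to \mc B/b$ and $\mc A \to \mc A/F$. The composites $\mc C \xrightarrow{g} \mc B \to \mc B/b$ and $\mc C \xrightarrow{f} \mc A \to \mc A/F$ both send every element of $C_0$ to $1$, hence factor through the filter quotient $q : \mc C \to \mc C/C_0$ as $g_b : \mc C/C_0 \to \mc B/b$ and $f_{C_0} : \mc C/C_0 \to \mc A/F$; because the subterminals of $\mc C$ sent to $1$ by $\mc C \to \mc B/b$ are exactly $C_0$, \cref{factorisation} identifies $(q, g_b)$ with the (filter-quotient, t-conservative) factorisation of $\mc C \to \mc B/b$, so $g_b$ is t-conservative. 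Now let $\mc E$ be the cocomma of $\mc B/b \xleftarrow{g_b} \mc C/C_0 \xrightarrow{f_{C_0}} \mc A/F$, with legs $\ell_1 : \mc B/b \to \mc E$ and $\ell_2 : \mc A/F \to \mc E$; by hypothesis $\ell_2$ is t-conservative. Using the universal property of $\mc D$, the pair $\ell_1 \circ (\mc B \to \mc B/b)$ and $\ell_2 \circ (\mc A \to \mc A/F)$, together with the $2$-cell of $\mc E$ whiskered by $q$ (which supplies a $2$-cell between the two composites out of $\mc C$, since $(\mc B \to \mc B/b)\,g = g_b\,q$ and $(\mc A \to \mc A/F)\,f = f_{C_0}\,q$), assemble into a cocone and induce $\psi : \mc D \to \mc E$ with $\psi v = \ell_1 \circ (\mc B \to \mc B/b)$ and $\psi u = \ell_2 \circ (\mc A \to \mc A/F)$. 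Being lex, $\psi$ is monotone on subterminals, so $vb \le ua$ gives $\psi(vb) \le \psi(ua)$; but $b$ maps to $1$ in $\mc B/b$, so $\psi(vb) = \ell_1(1) = 1$, forcing $\ell_2$ of the image of $a$ in $\mc A/F$ to be $1$, and t-conservativity of $\ell_2$ then gives that $a$ maps to $1$ in $\mc A/F$, i.e.\ $a \in F$. Hence there is $c \in C_0$ with $fc \le a$; together with $b \le gc$ this is the desired interpolant, so by \cref{reducetosub1} the proof is complete.

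The one point requiring care is that the cocommas involved are genuine lax colimits, not pseudo-pushouts: $\mc E$ must \emph{not} be computed as a pushout or a slice, and only its $1$-dimensional universal property is used --- to build $\psi$, and to apply closure under cocomma for $\ell_2$. The rest is bookkeeping with the orthogonal factorisation system of \cref{factorisation} and with filter quotients, using repeatedly that a subterminal $w$ becomes $1$ in a filter quotient $\mc X \to \mc X/G$ precisely when $w \in G$, and that algebra morphisms preserve $1$ and finite meets of subterminals.
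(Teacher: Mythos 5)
Your proof is correct and follows essentially the same route as the paper: the forward direction is identical, and the converse uses the same localisations ($\mc B$ at $b$, $\mc C$ at $g\inv(\dv b)$, $\mc A$ at the pushforward filter) together with the factorisation system of \cref{factorisation} and closure of t-conservative maps under cocomma. The only cosmetic difference is in the final step, where the paper identifies the cocomma of the localised span concretely as the square of localisations of the original square (via \cref{slicepushout} and pasting), whereas you form that cocomma abstractly and construct a comparison map out of $\mc D$; both are valid.
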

\begin{proof}
    For the only if direction, consider a cocomma square in $\alg(\ms T)$ below,
    \[\begin{tikzcd}
    	{\mc C} & {\mc A} \\
    	{\mc B} & {\mc D}
    	\arrow["f", from=1-1, to=1-2]
    	\arrow["g"', from=1-1, to=2-1]
    	\arrow[""{name=0, anchor=center, inner sep=0}, "u", from=1-2, to=2-2]
    	\arrow[""{name=1, anchor=center, inner sep=0}, "v"', from=2-1, to=2-2]
    	\arrow["\alpha", curve={height=-6pt}, between={0.2}{0.8}, Rightarrow, from=1, to=0]
    \end{tikzcd}\]
    Suppose $g$ is t-conservative. To show $u$ is t-conservative, consider any $a\in\sub_{\mc A}(1)$ that $ua = 1$. Then $v1 \le ua$, thus by the interpolation property we must have $c\in\sub_{\mc C}(1)$ that $1 \le gc$ and $fc \le a$. Now since $g$ is t-conservative, $gc = 1$ implies $c = 1$. Hence, $fc = 1 \le a$, which means $a = 1$. This shows $u$ is also t-conservative.

    For the if direction, we use the reduction to subobject lattices on 1 discussed in~\cref{reducetosub1}. Again consider a cocomma square in $\alg(\ms T)$ as before, and suppose we have $b\in \sub_{\mc B}(1)$ and $a\in\sub_{\mc A}(1)$ that $vb \le ua$ in $\sub_{\mc D}(1)$. Consider the following diagram,
    \[\begin{tikzcd}
    	{\mc C} && {\mc A} \\
    	& {\mc C/g\inv(b)} && {\mc A/fg\inv(b)} \\
    	{\mc B} && {\mc D} \\
    	& {\mc B/b} && {\mc D/vb}
    	\arrow["f", from=1-1, to=1-3]
    	\arrow[two heads, from=1-1, to=2-2]
    	\arrow["g"', from=1-1, to=3-1]
    	\arrow[two heads, from=1-3, to=2-4]
    	\arrow[""{name=0, anchor=center, inner sep=0}, "u"{description, pos=0.2}, from=1-3, to=3-3]
    	\arrow[from=2-2, to=2-4]
    	\arrow[from=2-2, to=4-2]
    	\arrow[""{name=1, anchor=center, inner sep=0}, "u", from=2-4, to=4-4]
    	\arrow[""{name=2, anchor=center, inner sep=0}, "v"{description, pos=0.2}, from=3-1, to=3-3]
    	\arrow[two heads, from=3-1, to=4-2]
    	\arrow[two heads, from=3-3, to=4-4]
    	\arrow[""{name=3, anchor=center, inner sep=0}, "v"{description}, from=4-2, to=4-4]
    	\arrow["\alpha"{description}, between={0.2}{0.8}, Rightarrow, from=2, to=0]
    	\arrow[between={0.2}{0.8}, Rightarrow, from=3, to=1]
    \end{tikzcd}\]
    By~\cref{slicepushout}, again both the top and bottom squares are pushout in $\alg(\ms T)$. Since the back square is a cocomma in $\alg(\ms T)$, so is the front square. Now by construction, the map $\mc C/g\inv(\dv b) \to \mc B/b$ is t-conservative, thus so is the map $\mc A/fg\inv(\dv b) \to\mc D/vb$. Given this, since $vb \le ua$, which means $ua = 1$ in $\mc D/vb$, we must already have $ua = 1$ in $\mc A/fg\inv(\dv b)$. This means there exists $c\in g\inv(\dv b)$, viz. $b \le gc$, that $fc \le a$. Hence, the subobject lattices on $1$ in the original square has interpolation. This implies $\ms T$ has the interpolation property by~\cref{reducetosub1}.
\end{proof}

There are two important aspects of~\cref{tstableinterpolation} compared to the traditional of algebraic logic. First is that it is a categorification of results about propositional logic into the first-order case:

\begin{rem}[A comparison with Algebraic Logic] \label{hoog}    It is indeed an important observation in algebraic logic that certain \emph{exactness properties} of the corresponding doctrine of propositional algebras is intimately connected to various logical properties of the propositional logic, with interpolation being one prominent example. For an \emph{implicative fragment}, i.e.\ a fragment where implication is a primitive logical operator, one typically finds an equivalence between the interpolation property of the logic and the corresponding category of algebras having the so-called \emph{strong amalgamation property}, which categorically is simply stating that monomorphisms will be stable under pushout; see e.g.~\cite{hoogland2001definability,galli2003strong}. Thus, \cref{tstableinterpolation} above is a similar statement stating the equivalence between the interpolation property of a fragment of logic with the stability of a class of ``weak monomorphisms'' which we call t-conservative maps.
\end{rem}

However, there is also an obvious difference between our consideration of the ``lax'' squares and cocommas rather than commuting squares and pushouts:

\begin{rem}[Why lax diagrams?]
    For a purely \emph{positive} fragment of logic, where implication is not a primitive operation, considering \emph{lax properties} is necessary, and this applies to both propositional and first-order logic. For instance, for the propositional case, it is observed in~\cite{pitts1983amalgamation} that the category of distributive lattices satisfies the strong amalgamation property, i.e.\ monomorphisms of distributive lattices are indeed closed under pushouts. However, pushout squares of distributive lattices fail to have the interpolation property in general. Instead, it will be a consequence of our main result in~\cref{interpolationofsubfragment} and~\cref{interpolationpretopoi} that cocomma squares of distributive lattices indeed have the interpolation property. 
\end{rem}

\begin{rem}[Limitations of this work] \label{limitations}
    In particular, this means that the techniques in our paper only directly applies to fragments of geometric logic, which exclude e.g. first-order intuitionistic logic. It is possible to have a broader picture for a more general notion of doctrine, say a suitable 2-monad on the \emph{(2,1)-category} of left exact categories with natural \emph{isomorphisms}. The reason one cannot work fully 2-categorically is due to the fact that, in the presence of negative operators, say implication and universal quantifier, the corresponding doctrine \emph{cannot} be 2-monadic over $\Lex$: The functor taking a lex category to the free Heyting category simply cannot be extended to a 2-functor --- the same already happens when one consider the free Heyting algebra associated to a meet-semi-lattice. 
\end{rem}

\begin{rem}[A comment on Beth definability]
Finally, we shall observe that when a logic has a \textit{deduction theorem}, one can often derive a form of Beth definability from the presence of Craig interpolation. Also this wisdom emerges very clearly from algebraic logic (see the discussion in \cite[3.3-3.6]{hoogland2001definability}). Unfortunately geometric logic does not have an implication among its defining features, and lacks a deduction theorem, hence this paper does not try to prove a form of Beth definability as a corollary of the efforts put in order to prove Craig interpolation. In order to discuss a variation of geometric logic that takes into account an implication, one should work in the $2$-category of topoi and \textit{open} geometric morphisms and re-develop Kan injectivity there. There is some evidence in the literature that this would be a somewhat pathological $2$-category \cite{bezhanishvili2024category}.
\end{rem}

\section{From doctrines to logics: from slicing to \'etale map classifiers} \label{sec:etale}
Our next step is to switch the point of view of the paper from \textit{doctrines} (\Cref{doctrinesdef}) to (fragments of geometric) logic in the sense of \cite{di2025logic}. The notion of \textit{logic} was introduced in \cite{di2025logic} having in mind different semantic prescriptions, and we refer to Sec 4 of that paper for a complete contexualization. Here we provide a brief recap on the set up:

The aim of this section will be to establish which fragments of geometric logic will correspond to doctrines preserving slicing, so that we can apply~\cref{tstableinterpolation} in the next section. Interestingly, this will correspond to the existence of a classifier of \'etale maps (\Cref{etaleimpliessclicing}).

\subsection{A recap on fragments of geometric logic}

A fragment of geometric logic in the sense of~\cite{di2025logic} is specified by a family of geometric morphisms $\mc H$. This induces a sub 2-category $\WRInj(\mc H)$ of $\Topoi$, where objects are those topoi $\mc X$ where for any $f : \mc E \to \mc F$ in $\mc H$ and any $x : \mc E \to \mc X$, 
\[
\begin{tikzcd}
    \mc E \ar[r, "x"] \ar[d, "f"'] & \mc X \\ 
    \mc F \ar[ur, dashed, "\ran_fx"']
\end{tikzcd}
\]
the right Kan extension $\ran_fx$ in the 2-category $\Topoi$ of topoi exists. In this case we say $\mc X$ is \emph{weakly right Kan injective} w.r.t. $\mc H$. A morphism in $\WRInj(\mc H)$ is a geometric morphism that preserves these right Kan extensions. 

Any family $\mc H$ induces a doctrine $\ms T^{\mc H}$, where for any $\mc A\in\lex$, the free algebra $\ms T^{\mc H}(\mc A)$ is given as follows,
\[ \ms T^{\mc H}(\mc A) \simeq \WRInj(\mc H)(\psh(\mc A),\Set[\mbb O]), \]
where here $\Set[\mbb O]$ is the object classifier. More generally, for any $\mc X\in\WRInj(\mc H)$, we define its \emph{syntactic category} as follows,
\[ \syn^{\mc H}(\mc X) \simeq \WRInj(\mc X,\Set[\mbb O]). \]

\begin{war}[Bounded logics]
As we previously did for the case of doctrines (\Cref{boundeddoc}), we shall restrict to study only \textit{bounded} logics in the sense of \cite[Sec. 5.1.1]{di2025logic}.
\end{war}

\begin{rem}
    The reason the above is well-defined is because that free topoi, viz. presheaf topoi over lex categories, are weakly right Kan injective w.r.t. \emph{all} geometric morphisms. It is instructive to recall how to compute such right Kan extensions (cf.~\cite{diliberti2022geometry,di2025logic}): Given $f : \mc E \to \mc F$ and $x : \mc E \to \psh(\mc A)$ with $\mc A$ left exact, 
    \[
    \begin{tikzcd}
        \mc E \ar[d, "f"'] \ar[r, "x"] & \psh(\mc A) \\
        \mc F \ar[ur, dashed, "\ran_fx"']
    \end{tikzcd}
    \]
    the right Kan extension $\ran_fx$ above can be computed as
    \[ (\ran_fx)^* \cong \lan_{\yo} f_*x^*\yo, \quad (\ran_fx)_* \cong \lan_{f_*}x_*. \]
\end{rem}

\begin{rem}\label{syncat}
    Concretely, a presheaf $Y$ in $\psh(\mc A)$ belongs to $\ms T^{\mc H}\mc A$ iff for any $f : \mc E \to \mc F$ in $\mc H$ and $x : \mc E \to \psh(\mc A)$, the lower triangle below commutes,
    \[
    \begin{tikzcd}
        \mc E \ar[d, "f"'] \ar[r, "x"] & \psh(\mc A) \ar[d, "Y"] \\
        \mc F \ar[r, dashed, "\ran_fYx"'] \ar[ur, dashed, "\ran_fx"description] & \Set[\mbb O]
    \end{tikzcd}
    \]
    Since right Kan extensions into $\Set[\mbb O]$ computes direct images (cf.~\cite[Exm. 1.3.2]{di2025logic}), concretely this exactly means that
    \[ (\ran_fx)^*Y \cong f_*x^*Y. \]
    More specifically, if we write $Y \cong \ct_{i\in I}A_i$ as a colimit of representables, the above requires that
    \[ \ct_{i\in I}f_*x^*A_i \cong f_*x^*\ct_{i\in I}A_i. \]
    A sufficient condition for this to hold is that $f_*$ preserves colimit indexed by $I$.
\end{rem}

\subsection{\'Etale map classifier}
In this subsection we shall discuss the notion of \'etale map classifier for a logic $\mathcal{H}$. The existence of such classifier will entail that the associated monad $\mathsf{T}^\mathcal{H}$ preserves slicing (\Cref{etaleimpliessclicing}).

Let us start by observing that we have a \textit{classifier for \'etale maps} in $\Topoi$. Let $\Set[\mbb O]$ be the object classifier and we also use $\mbb O$ to denote the universal object in $\Set[\mbb O]$. Then any \'etale map is a pullback as below,
\[\begin{tikzcd}
    {\mc X/X} & {\Set[\mbb O]/\mbb O} \\
    {\mc X} & {\Set[\mbb O]}
    \arrow[from=1-1, to=1-2]
    \arrow["{{{\Pi_X}}}"', from=1-1, to=2-1]
    \arrow["\lrcorner"{anchor=center, pos=0.125}, draw=none, from=1-1, to=2-2]
    \arrow[from=1-2, to=2-2]
    \arrow["X"', from=2-1, to=2-2]
\end{tikzcd}\]

We may similarly define what it means to have an \'etale map classifier for an arbitrary fragment of logic $\mc H$:

\begin{defn}\label{absoluteetale}
    We say a logic $\mc H$ has an \emph{\'etale map classifier}, if there is an \'etale map $\mc Z_* \to \mc Z$ in $\WRInj(\mc H)$, such that for any \'etale map $\mc Y \to \mc X$ in $\WRInj(\mc H)$ there is an essentially unique map $\mc X \to \mc Z$ making the following a pullback, 
    \[
    \begin{tikzcd}
        \mc Y \ar[d] \ar[r] & \mc Z_* \ar[d] \\ 
        \mc X \ar[r] & \mc Z
        \arrow["\lrcorner"{anchor=center, pos=0.125}, draw=none, from=1-1, to=2-2]
    \end{tikzcd}
    \]
\end{defn}

\begin{rem}
    Since the forgetful functor $\WRInj(\mc H) \to \Topoi$ creates and preserves 2-limits, being a pullback in $\WRInj(\mc H)$ and in $\Topoi$ means the same.
\end{rem}

The observation is that, if a logic $\mc H$ has an \'etale map classifier, it must be the one in $\Topoi$:

\begin{prop}
    If $\mc H$ has an \'etale map classifier, then it must be equivalent to $\Set[\mbb O]/\mbb O \to \Set[\mbb O]$.
\end{prop}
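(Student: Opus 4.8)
The plan is to show that $\Set[\mbb O]$ itself lies in $\WRInj(\mc H)$ and that the étale map classifier there must literally be the one inherited from $\Topoi$, via a universal-property/Yoneda argument. First I would recall that $\Set[\mbb O] = \psh(\mbb O)$ is a free topos on a lex category, hence is weakly right Kan injective w.r.t. \emph{all} geometric morphisms, so $\Set[\mbb O]\in\WRInj(\mc H)$; similarly $\Set[\mbb O]/\mbb O \simeq \psh(\mbb O/\ast)$ — the slice of a presheaf topos on a presheaf topos — is again a free topos (here using that $\mbb O/\ast$ is again a lex category, or equivalently that slicing $\Set[\mbb O]$ over the universal object corepresents ``object together with a global point''), so $\Set[\mbb O]/\mbb O\to\Set[\mbb O]$ is an étale map \emph{in $\WRInj(\mc H)$}. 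The key point is that this map has the classifying property for étale maps \emph{within} $\WRInj(\mc H)$: given any étale $\mc Y\to\mc X$ in $\WRInj(\mc H)$, the classifying map $\mc X\to\Set[\mbb O]$ in $\Topoi$ is a morphism of $\WRInj(\mc H)$ because it classifies an object, and geometric morphisms between Kan-injective topoi are exactly the ones preserving the relevant right Kan extensions — one has to check the classifying map preserves them, which follows since its inverse image is a left adjoint and the Kan extensions in question are computed by formulas (as recalled in the excerpt) that are stable under such base change.

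Granting that, the proposition becomes a general uniqueness statement: if a $2$-category $\mc K$ (here $\WRInj(\mc H)$) has an object $Z$ together with a morphism $Z_\ast\to Z$ that classifies étale maps, then $(Z,Z_\ast\to Z)$ is determined up to canonical equivalence. So the second step is the formal argument: suppose $\mc Z_\ast\to\mc Z$ is \emph{any} étale map in $\WRInj(\mc H)$ with the classifying property of \cref{absoluteetale}. Since $\Set[\mbb O]/\mbb O\to\Set[\mbb O]$ is an étale map in $\WRInj(\mc H)$, the classifying property of $\mc Z$ gives an essentially unique $\Set[\mbb O]\to\mc Z$ exhibiting $\Set[\mbb O]/\mbb O\to\Set[\mbb O]$ as a pullback of $\mc Z_\ast\to\mc Z$. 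Conversely — and this is where I need $\Set[\mbb O]$ to actually \emph{have} the étale classifying property inside $\WRInj(\mc H)$, not just in $\Topoi$ — the map $\mc Z_\ast\to\mc Z$ is itself an étale map in $\WRInj(\mc H)$, so it is classified by an essentially unique $\mc Z\to\Set[\mbb O]$. Composing the two classifying maps in both orders and invoking the essential uniqueness clause twice (once for the identity of $\mc Z$, once for the identity of $\Set[\mbb O]$, each of which trivially classifies the respective étale map) shows the two composites are equivalent to identities, so $\mc Z\simeq\Set[\mbb O]$ compatibly with the étale maps.

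I expect the main obstacle to be the first step: verifying that $\Set[\mbb O]$ genuinely carries an étale map classifier \emph{relative to $\WRInj(\mc H)$}. Two things must be checked. (i) $\Set[\mbb O]/\mbb O$ and $\Set[\mbb O]$ are both in $\WRInj(\mc H)$ — immediate from the remark that free topoi are weakly right Kan injective w.r.t.\ all geometric morphisms, once one notes $\Set[\mbb O]/\mbb O$ is again free (it is $\psh$ of a lex category, since slices of lex categories are lex, cf.\ the ``slices are cocommas'' discussion). (ii) For an étale map $\mc Y\to\mc X$ \emph{in $\WRInj(\mc H)$}, the classifying geometric morphism $\mc X\to\Set[\mbb O]$ is a $1$-cell of $\WRInj(\mc H)$, i.e.\ preserves the relevant right Kan extensions; and the pullback square exhibiting $\mc Y$ is a pullback in $\WRInj(\mc H)$ — but this is automatic, since the forgetful $\WRInj(\mc H)\to\Topoi$ creates limits (as the excerpt records). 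For (ii), preservation of right Kan extensions by the classifying map $X\colon\mc X\to\Set[\mbb O]$ can be extracted from the explicit formulas $(\ran_f x)^\ast\cong\lan_{\yo}f_\ast x^\ast\yo$ and $(\ran_f x)_\ast\cong\lan_{f_\ast}x_\ast$ recalled before \cref{syncat}, together with the fact that $X^\ast$ preserves colimits. Once (i) and (ii) are in place, the rest is the soft $2$-categorical uniqueness argument above.
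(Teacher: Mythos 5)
Your two-step architecture (first upgrade $\Set[\mbb O]/\mbb O \to \Set[\mbb O]$ to an \'etale classifier \emph{inside} $\WRInj(\mc H)$, then run a soft uniqueness-of-classifiers argument) differs from the paper's, and its first step has a genuine gap. The problematic claim is your point (ii): that for an \'etale map $\mc Y\to\mc X$ in $\WRInj(\mc H)$, the classifying geometric morphism $\mc X\to\Set[\mbb O]$ automatically preserves the relevant right Kan extensions and is therefore a $1$-cell of $\WRInj(\mc H)$. This is not automatic, and your justification (``the inverse image is a left adjoint and the Kan extension formulas are stable under base change'') does not establish it. By \cref{syncat}, already for $\mc X=\psh(\mc A)$ the classifying map of $Y\in\psh(\mc A)$ preserves the right Kan extension along $f\in\mc H$ iff $(\ran_fx)^*Y\cong f_*x^*Y$, which, writing $Y\cong\ct_{i\in I}A_i$, amounts to $f_*x^*$ commuting with that colimit --- i.e.\ precisely to $Y\in\ms T^{\mc H}(\mc A)$, a nontrivial condition that fails for general $Y$. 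Worse, if your claim held, $\Set[\mbb O]/\mbb O\to\Set[\mbb O]$ would be an \'etale classifier for \emph{every} fragment $\mc H$, rendering the hypothesis ``$\mc H$ has an \'etale map classifier'' vacuous throughout the paper; the authors explicitly say they lack a counterexample but do not know the hypothesis always holds, and the proof of \cref{etaleimpliessclicing} invokes the classifier exactly to pass from ``$\Pi_Y\in\WRInj(\mc H)$'' to ``$Y\in\ms T^{\mc H}(\mc A)$''. So you are assuming a strong form of the very thing the hypothesis is there to supply.

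The repair is the paper's asymmetric argument, which never requires the classifying map into $\Set[\mbb O]$ to lie in $\WRInj(\mc H)$. In one direction, use the universal property of $\Set[\mbb O]/\mbb O\to\Set[\mbb O]$ in $\Topoi$: since $\mc Z_*\to\mc Z$ is in particular an \'etale geometric morphism, it is classified by an essentially unique \emph{geometric morphism} $f:\mc Z\to\Set[\mbb O]$, with no Kan-injectivity demanded of $f$. In the other direction, use the hypothesized universal property of $\mc Z_*\to\mc Z$ in $\WRInj(\mc H)$, applied to $\Set[\mbb O]/\mbb O\to\Set[\mbb O]$, which \emph{does} lie in $\WRInj(\mc H)$ because $\Set[\mbb O]$ is a free topos and $\mbb O$ is representable there --- this part of your step (i) is correct and is exactly the paper's observation. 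One then deduces $fg\cong\id$ and $gf\cong\id$ from the two essential-uniqueness clauses separately. Your closing uniqueness argument survives once it is rephrased to play these two \emph{different} universal properties against each other, rather than two classifiers living in the same $2$-category.
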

\begin{proof}
    Suppose $\mc Z_* \to \mc Z$ is the \'etale map classifier for $\mc H$. By definition this is \'etale, thus we get an essentially unique map $f : \mc Z \to \Set[\mbb O]$ where the following is a pullback,
    \[
    \begin{tikzcd}
        \mc Z_* \ar[d] \ar[r] & \Set[\mbb O]/\mbb O \ar[d] \\ 
        \mc Z \ar[r, "f"'] & \Set[\mbb O]
        \arrow["\lrcorner"{anchor=center, pos=0.125}, draw=none, from=1-1, to=2-2]
    \end{tikzcd}
    \]
    On the other hand, $\Set[\mbb O]$ is a \emph{free topos}, and the universal object $\mbb O$ is \emph{representable} in $\Set[\mbb O]$. This implies the generic \'etale map $\Set[\mbb O]/\mbb O \to \Set[\mbb O]$ in $\Topoi$ belongs to $\WRInj(\mc H)$ for any $\mc H$. Thus, by the universal property of $\mc Z_* \to \mc Z$, we also get an essentially unique map $g : \Set[\mbb O] \to \mc Z$ with a pullback
    \[
    \begin{tikzcd}
        \Set[\mbb O]/\mbb O \ar[d] \ar[r] & \mc Z_* \ar[d] \\ 
        \Set[\mbb O] \ar[r, "g"'] & \mc Z
        \arrow["\lrcorner"{anchor=center, pos=0.125}, draw=none, from=1-1, to=2-2]
    \end{tikzcd}
    \]
    By the respective universal property of $\Set[\mbb O]/\mbb O \to \Set[\mbb O]$ and $\mc Z_* \to \mc Z$, we must have $fg \cong \id$ and $gf \cong \id$, thus $\mc Z_* \to \mc Z$ must be equivalent to $\Set[\mbb O]/\mbb O \to \Set[\mbb O]$.
\end{proof}

\begin{rem}
    Recall that for a coherent topos $\mc X$, for any $X\in\mc X$ the topos $\mc X/X$ is coherent iff $X$ is a coherent object in $\mc X$. The above condition is an abstraction of this fact that applies to an arbitrary fragment of logic.
\end{rem}

\begin{prop}\label{etaleimpliessclicing}
    If a fragment $\mc H$ has an \'etale map classifier, then $\ms T^{\mc H}$ preserves slicing.
\end{prop}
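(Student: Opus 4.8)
The plan is to test the comparison $(\ms T^{\mc H}\mc A)/\eta a \to \ms T^{\mc H}(\mc A/a)$ of \cref{preserveslice} directly, after unwinding it in the presheaf model. Recall the standard equivalence $\psh(\mc A/a)\simeq\psh(\mc A)/\yo a$, where $\yo a=\eta_{\mc A}a$ and the forgetful $\mc A/a\to\mc A$ induces the étale projection $e\colon\psh(\mc A)/\yo a\to\psh(\mc A)$. By definition $\ms T^{\mc H}\mc B=\WRInj(\mc H)(\psh\mc B,\Set[\mbb O])$ is the full subcategory of $\Topoi(\psh\mc B,\Set[\mbb O])\simeq\psh\mc B$ spanned by the \emph{$\mc H$-objects} --- those $Y$ whose classifying geometric morphism preserves every right Kan extension along a map of $\mc H$ (\cref{syncat}). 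Consequently both sides of the comparison sit as full subcategories of $\psh(\mc A)/\yo a$: the source on the pairs $Y\to\yo a$ with $Y$ an $\mc H$-object of $\psh\mc A$, the target on the $\mc H$-objects of $\psh(\mc A)/\yo a$; tracing through the construction (one can sanity-check this against the degenerate case $\mc H=\varnothing$, where $\ms T^{\varnothing}=\psh$ and the comparison is the standard equivalence $\psh(\mc A)/\yo a\simeq\psh(\mc A/a)$), the comparison is compatible with these embeddings. So the proposition reduces to a single claim: for $Y\to\yo a$ in $\psh(\mc A)/\yo a$, the pair $(Y\to\yo a)$ is an $\mc H$-object of $\psh(\mc A)/\yo a$ iff $Y$ is an $\mc H$-object of $\psh\mc A$.

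I would then feed in the étale classifier hypothesis in the following shape. Since the forgetful $\WRInj(\mc H)\to\Topoi$ creates limits and free topoi are weakly right Kan injective w.r.t.\ all geometric morphisms, for any $\WRInj(\mc H)$-morphism $g\colon\mc X\to\Set[\mbb O]$ the pullback $\mc X\times_{\Set[\mbb O]}(\Set[\mbb O]/\mbb O)\cong\mc X/g^{*}\mbb O$ again lies in $\WRInj(\mc H)$; combined with \cref{absoluteetale} this yields: for $\mc X\in\WRInj(\mc H)$ and $Z\in\mc X$, the object $Z$ is an $\mc H$-object of $\mc X$ iff the étale projection $\mc X/Z\to\mc X$ is a morphism of $\WRInj(\mc H)$ --- the forward direction being formal and the converse being exactly where the classifier hypothesis is used. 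Applying this with $\mc X=\psh\mc A$ and with $\mc X=\psh(\mc A)/\yo a$, and using $(\psh(\mc A)/\yo a)/(Y\to\yo a)\simeq\psh(\mc A)/Y$, the claim turns into: the étale projection $\psh(\mc A)/Y\to\psh(\mc A)/\yo a$ is a $\WRInj(\mc H)$-morphism iff its composite with $e$ is (note $e$ itself is always a $\WRInj(\mc H)$-morphism, since $\yo a=\eta a$ is an $\mc H$-object). One direction is immediate from closure of $\WRInj(\mc H)$-morphisms under composition; the other requires that $e$ \emph{reflects} the property of preserving right Kan extensions along maps of $\mc H$.

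That reflection property is where I expect the real work --- and the main obstacle --- to lie. Concretely (via \cref{syncat}) it is a Beck--Chevalley-type identity: for $f\colon\mc E\to\mc F$ in $\mc H$ and $y\colon\mc E\to\psh(\mc A)/\yo a$, the right Kan extension $\ran_f y$ (which exists, the target being a presheaf topos) should be computed fibrewise over $\psh(\mc A)$, i.e.\ $e\,\ran_f y\cong\ran_f(ey)$ compatibly with the universal section carried by a slice topos. I would establish this from the strong structure of étale geometric morphisms (local homeomorphisms: essential, locally connected, stable under pullback, with $e_{!}$ conservative) together with the explicit description of $\ran_f$ into a presheaf topos recalled in the excerpt, $(\ran_f y)_*\cong\lan_{f_*}y_*$; the upshot should be that ``having an étale classifier'' is precisely the hypothesis forcing this base change to be valid inside $\WRInj(\mc H)$ rather than only in $\Topoi$. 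Once the claim is established, \cref{preserveslice} for $\ms T^{\mc H}$ follows; alternatively the same input can be repackaged through \cref{preservelocalisationcocartesian}, by verifying that $\psh$ sends the slice coinserter of \cref{slicecoinserter} to $e$ and that the étale classifier exhibits the unit square of $\mc A\to\mc A/a$ as a pseudo-pushout.
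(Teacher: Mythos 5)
Your reduction and your classifier characterisation are both sound, and in fact they already contain the paper's entire proof --- but you have mislocated the difficulty, and as a result your write-up defers ``the real work'' to a lemma that is neither needed nor where the classifier hypothesis enters. The actual content of the proposition is essential surjectivity of the comparison: given $(Y\to\yo a)\in\ms T^{\mc H}(\mc A/a)$, show $Y\in\ms T^{\mc H}(\mc A)$. Translated through your own characterisation, this is exactly the implication ``$\Pi_y\in\WRInj(\mc H)$ and $e=\Pi_{\yo a}\in\WRInj(\mc H)$, hence $\Pi_Y\cong e\,\Pi_y\in\WRInj(\mc H)$, hence $Y$ is an $\mc H$-object'' --- i.e.\ the composition direction you dismiss as immediate, followed by the converse half of your characterisation lemma, which is the one place \cref{absoluteetale} is invoked. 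That is the paper's proof verbatim: $\Pi_Y\cong\Pi_{\yo a}\Pi_y$ with $\Pi_{\yo a}\in\WRInj(\mc H)$ because $\yo a$ is representable and $\Pi_y\in\WRInj(\mc H)$ by hypothesis, and then the \'etale classifier upgrades ``$\Pi_Y$ is a $\WRInj(\mc H)$-morphism'' to ``the classifying map of $Y$ lies in $\WRInj(\mc H)$''.

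The ``reflection'' direction you flag as the main obstacle --- $e\Pi_y\in\WRInj(\mc H)\Rightarrow\Pi_y\in\WRInj(\mc H)$, equivalently that $Y$ being an $\mc H$-object of $\psh\mc A$ forces $(Y\to\yo a)$ to be an $\mc H$-object of the slice --- amounts only to well-definedness of the canonical comparison functor, and it is formal: $(Y\to\yo a)$ is the pullback of $\ms T^{\mc H}(l)(Y)\to\ms T^{\mc H}(l)(\eta a)$ along the diagonal section $1\to\ms T^{\mc H}(l)(\eta a)$, and $\ms T^{\mc H}(\mc A/a)\hook\psh(\mc A/a)$ is a full lex subcategory, so this pullback stays inside it. No Beck--Chevalley identity for \'etale geometric morphisms is required, and your claim that the classifier hypothesis is ``precisely'' what validates that base change is a misattribution. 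As written, the proposal leaves its self-declared crucial step as a sketch, so it is not yet a proof; but once the two directions are sorted correctly, everything needed is already on your page, and it coincides with the paper's argument.
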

\begin{proof}
    For any $\mc A\in\Lex$ and $X\in\mc A$, we have a canonical equivalence 
    \[ \psh(\mc A/X) \simeq \psh(\mc A)/X. \]
    The canonical functor $\ms T^{\mc H}(\mc A)/X \to \ms T^{\mc H}(\mc A/X)$ lives over this equivalence,
    \[\begin{tikzcd}
    	{\ms T^{\mc H}\mc A/X} & {\ms T^{\mc H}(\mc A/X)} \\
    	{\psh(\mc A)/X} & {\psh(\mc A/X)}
    	\arrow[from=1-1, to=1-2]
    	\arrow[hook, from=1-1, to=2-1]
    	\arrow[hook, from=1-2, to=2-2]
    	\arrow["\simeq"', from=2-1, to=2-2]
    \end{tikzcd}\]
    which shows it is fully faithful. Thus, to show it is an equivalence it suffices to show it is essentially surjective. Equivalently, suppose we have $y : Y \to X$ in $\ms T^{\mc H}(\mc A/X) \subseteq \psh(\mc A/X) \simeq \psh(\mc A)/X$, we have to show that $Y\in\ms T^{\mc H}(\mc A)$. However, we have the following commutative diagram,
    \[\begin{tikzcd}
    	{(\psh(\mc A)/X)/y} & {\psh(\mc A)/X} \\
    	{\psh(\mc A)/Y} & {\psh(\mc A)}
    	\arrow["{\Pi_y}", from=1-1, to=1-2]
    	\arrow["{\Pi_X}", from=1-2, to=2-2]
    	\arrow["\simeq", from=2-1, to=1-1]
    	\arrow["{\Pi_Y}"', from=2-1, to=2-2]
    \end{tikzcd}\]
    Since $X$ is representable, $\Pi_X\in\WRInj(\mc H)$; $\Pi_y \in \WRInj(\mc H)$ since $y\in\ms T^{\mc H}(\mc A/X)$ by assumption. It follows that the composite, 
    \[ \Pi_Y \cong \Pi_X\Pi_y : \psh(\mc A)/Y \simeq (\psh(\mc A)/X)/y \to \psh(\mc A) \] 
    belongs to $\WRInj(\mc H)$ as well. By the fact that $\mc H$ has an \'etale map classifier, it follows that $Y\in\ms T^{\mc H}(\mc A)$. Thus $\ms T^{\mc H}$ preserves slicing. 
\end{proof}

\begin{rem}
    For the above proof it suffices for $\mc H$ to have absolute \'etale classifier for \emph{free topoi}, not necessarily for arbitrary $\mc X\in\WRInj(\mc H)$ as in~\cref{absoluteetale}. 
\end{rem}

\section{Interpolation for finitary logics} \label{sec:intfin}

The remaining goal of this section is to show that all finitary logics between \emph{regular logic} and \emph{coherent logic}, which has an \'etale classifier, will have the interpolation property. By~\cref{tstableinterpolation}, it suffices to show t-conservative maps of $\ms T^{\mc H}$-algebras will be closed under cocommas. 

Our strategy is to exploit the classifying topoi construction for a logic described in~\cite{di2025logic}. Recall that the classifying topos construction for any logic $\mc H$ produces a relative adjoint to the forgetful functors from $\Topoi$ to $\ms{Alg}(\ms T^{\mc H})$,
\[\begin{tikzcd}
    {\mathsf{alg}(\mathsf{T}^{\Hcal})} & {\mathsf{Topoi}\op} \\
    {\mathsf{Alg}(\mathsf{T}^{\Hcal})}
    \arrow[""{name=0, anchor=center, inner sep=0}, "{\ms{Cl}}", from=1-1, to=1-2]
    \arrow[from=1-1, to=2-1]
    \arrow[""{name=1, anchor=center, inner sep=0}, from=1-2, to=2-1]
    \arrow["\dashv"{anchor=center, rotate=-90}, draw=none, from=1, to=0]
\end{tikzcd}\]
The crucial observation is that we can detect t-conservativity of maps between algebras on the level of the corresponding geometric morphism of their classifying topoi. We call the relevant class of geometric morphism \emph{op-dominant}:

\begin{defn}
    A geometric morphism $f : \mc X \to \mc Y$ is \emph{op-dominant} if the inverse image $f^*$ is t-conservative.
\end{defn}

\begin{rem}
    Note that a geometric morphism is called \emph{dominant} if $f^*$ reflect 0, i.e.\ $f^*$ is f-conservative. This is why we choose the above terminology.
\end{rem}

Recall from~\cite{di2025logic} that the regular fragment corresponds to the class of \emph{matte} geometric morphisms $\mc H_{\mr{matte}}$, which are those geometric morphisms whose direct images preserve epimorphisms. Similarly, the coherent fragment corresponds to the class of \emph{flat} geometric morphisms $\mc H_{\mr{flat}}$, which are those geometric morphisms whose direct images preserve epimorphisms and binary coproducts. 

\begin{prop}\label{tconsopdom}
    If $\mc H_{\mr{matte}} \subseteq \mc H$, then for a map $f : \mc A \to \mc B$ in $\alg(\ms T^{\mc H})$, $f$ is t-conservative iff $\Cl[f]$ is op-dominant.
\end{prop}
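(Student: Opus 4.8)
The plan is to transport t-conservativity across the embedding of an algebra into its classifying topos and reduce everything to the poset of subterminal objects. Recall from~\cite{di2025logic} that the relative adjunction $\Cl\dashv\syn^{\mc H}$ has invertible unit, so each $\mc A\in\alg(\ms T^{\mc H})$ carries a fully faithful, finite-limit-preserving comparison $\iota_{\mc A}\colon\mc A\hookrightarrow\Cl(\mc A)$, natural in $\mc A$ in the sense that $\Cl[f]^*\circ\iota_{\mc A}\simeq\iota_{\mc B}\circ f$ for $f\colon\mc A\to\mc B$. Since $\mc H_{\mr{matte}}\subseteq\mc H$, the algebras are regular categories and the Grothendieck topology presenting $\Cl(\mc A)$ as a localization of $\psh(\mc A)$ contains the regular covers, so $\iota_{\mc A}$ is a \emph{regular} functor. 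Both conditions in the statement concern only subterminals, and by naturality they sit inside the commuting square of frames
\[
\begin{tikzcd}
\sub_{\mc A}(1) \ar[r, "e_{\mc A}"] \ar[d, "f"'] & \sub_{\Cl(\mc A)}(1) \ar[d, "{\Cl[f]^*}"] \\
\sub_{\mc B}(1) \ar[r, "e_{\mc B}"'] & \sub_{\Cl(\mc B)}(1)
\end{tikzcd}
\]
with $e_{\mc A}(u):=\iota_{\mc A}(u)$; here $e_{\mc A}$ preserves the top and, as a restriction of a fully faithful functor, reflects it, while $\Cl[f]^*$ preserves joins of subterminals (being an inverse image). The direction ``$\Cl[f]$ op-dominant $\Rightarrow f$ t-conservative'' is then immediate: if $fu=1$ then $\Cl[f]^*e_{\mc A}(u)=e_{\mc B}(fu)=1$, so $e_{\mc A}(u)=1$ and hence $u=1$; this half does not use the hypothesis on $\mc H$.

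For the converse I would describe $\sub_{\Cl(\mc A)}(1)$ via $\sub_{\mc A}(1)$. As $\Cl(\mc A)$ is a subcanonical localization of $\psh(\mc A)$, any $V\le1$ is a colimit of objects $\iota_{\mc A}(a)$; since $\Cl(\mc A)(\iota_{\mc A}a,1)\cong\mc A(a,1)$ is a singleton and $\iota_{\mc A}$ is regular, the image of each coprojection $\iota_{\mc A}(a)\to V\hookrightarrow1$ equals $\iota_{\mc A}(u_a)$ with $u_a=\operatorname{im}_{\mc A}(a\to1)\in\sub_{\mc A}(1)$, whence $V=\bigvee_a e_{\mc A}(u_a)$. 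In other words $e_{\mc A}$ exhibits $\sub_{\Cl(\mc A)}(1)$ as the ``$\mc H$-ideal completion'' of $\sub_{\mc A}(1)$: a join $\bigvee_i e_{\mc A}(u_i)$ equals $1$ exactly when some $\mc H$-propositional combination $\psi$ of (appropriately many of) the $u_i$ already satisfies $\psi(u_{i_1},\dots)=1$ in $\sub_{\mc A}(1)$. Granting this, suppose $f$ is t-conservative and $\Cl[f]^*V=1$ for $V=\bigvee_i e_{\mc A}(u_i)\le1$. Then $1=\Cl[f]^*V=\bigvee_i e_{\mc B}(fu_i)$, so $\psi(fu_{i_1},\dots)=1$ in $\sub_{\mc B}(1)$ for some such $\psi$; since $f$ is a $\ms T^{\mc H}$-morphism it commutes with $\psi$, so $f(\psi(u_{i_1},\dots))=1$, and t-conservativity forces $\psi(u_{i_1},\dots)=1$ in $\sub_{\mc A}(1)$. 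Therefore $V\ge e_{\mc A}(\psi(u_{i_1},\dots))=1$, so $V=1$, proving $\Cl[f]^*$ t-conservative.

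The main obstacle — and the point where the concrete description of $\Cl$ in~\cite{di2025logic} is genuinely needed — is the claim that $\sub_{\Cl(\mc A)}(1)$ is the $\mc H$-ideal completion of $\sub_{\mc A}(1)$, equivalently that any covering of $1$ in $\Cl(\mc A)$ refines to an $\mc H$-propositional identity valid already in $\sub_{\mc A}(1)$. The assumption $\mc H_{\mr{matte}}\subseteq\mc H$ is essential exactly here: regularity of the algebras is what keeps the images of the coprojections $\iota_{\mc A}(a)\to1$ inside $\sub_{\mc A}(1)$, so that the subterminals of $\Cl(\mc A)$ are controlled by those of $\mc A$; the shape of the admissible combinations $\psi$ is dictated by the propositional operators $\mc H$ supplies — for the coherent fragment finite joins, where one invokes compactness of $1$ in a coherent topos, and for the regular fragment none, where $\bigvee_i e_{\mc A}(u_i)=1$ simply forces some $u_i=1$.
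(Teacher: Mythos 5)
Your first direction (op-dominant implies t-conservative) is correct and is essentially the paper's own argument: it follows from naturality of the comparison of $\mc A$ into its classifying topos together with the cancellation property of t-conservative maps (\cref{tconmono}), and, as you note, uses nothing about $\mc H$.

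The converse direction has a genuine gap. Everything rests on the claim that $e_{\mc A}$ exhibits $\sub_{\Cl(\mc A)}(1)$ as an ``$\mc H$-ideal completion'' of $\sub_{\mc A}(1)$, i.e.\ that a cover $\bigvee_i e_{\mc A}(u_i)=1$ in the classifying topos always refines to an identity $\psi(u_{i_1},\dots)=1$ holding already in $\sub_{\mc A}(1)$ for some ``$\mc H$-propositional combination'' $\psi$ (and, for your last step, that $e_{\mc A}(\psi(u_{i_1},\dots))\le\bigvee_i e_{\mc A}(u_i)$). But \cref{tconsopdom} is asserted for an \emph{arbitrary} fragment with $\mc H_{\mr{matte}}\subseteq\mc H$, and in this paper a fragment is just a class of geometric morphisms: it carries no syntactic presentation, so the collection of admissible combinations $\psi$ is not even defined, let alone shown to control the subterminals of $\Cl(\mc A)$. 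You verify the claim only at the two extremes (regular, via supercompactness of $1$; coherent, via compactness of $1$); the fragments strictly in between --- exactly the new cases the paper is after --- are not covered, and establishing the claim there would amount to a propositional conservativity theorem for $\Cl$ over $\mc A$ that is essentially as hard as the proposition itself. The paper sidesteps this entirely: it never analyses $\sub_{\Cl(\mc A)}(1)$, but factors $\mc A\rightarrowtail\Cl[\mc A]\rightarrowtail\psh(\mc A)$ and shows that $\psh(f)$ is t-conservative, using only that regularity of the algebras (from $\mc H_{\mr{matte}}\subseteq\mc H$) gives $\sub_{\psh(\mc A)}(1)\cong\ms D(\sub_{\mc A}(1))$ with $\ms D$ preserving t-conservativity; t-conservativity of $\Cl[f]^*$ then follows by cancellation (\cref{tconmono}) against the fully faithful inclusion $\Cl[\mc B]\rightarrowtail\psh(\mc B)$. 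To repair your argument you would need either to restrict to fragments where your ideal-completion description is actually proved, or to replace it by a fragment-independent device such as this detour through the presheaf topos.
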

\begin{proof}
    Consider the following commuting diagram,
    \[\begin{tikzcd}
    	\mc A & {\Cl[\mc A]} & {\psh(\mc A)} \\
    	\mc B & {\Cl[\mc B]} & {\psh(\mc B)}
    	\arrow[tail, from=1-1, to=1-2]
    	\arrow["f"', from=1-1, to=2-1]
    	\arrow[tail, from=1-2, to=1-3]
    	\arrow["{\Cl[f]^*}"{description}, from=1-2, to=2-2]
    	\arrow["{\psh(f)}", from=1-3, to=2-3]
    	\arrow[tail, from=2-1, to=2-2]
    	\arrow[tail, from=2-2, to=2-3]
    \end{tikzcd}\]
    If $\Cl[f]$ is op-dominant, i.e.\ $\Cl[f]^*$ is t-conservative, then by~\cref{tconmono} so is $f$. On the other hand, notice that $\mc H_{\mr{matte}} \subseteq \mc H$ implies that there is a forgetful functor 
    \[ \ms{alg}(\ms T^{\mc H}) \to \ms{alg}(\ms T^{\mc H_{\mr{matte}}}), \]
    and in particular any $\ms T^{\mc H}$-algebra will be regular. If $f$ is t-conservative, then so is $\psh(f)$: For a regular category $\mc A$, we have an isomorphism 
    \[ \sub_{\psh(\mc A)}(1) \cong \ms{D}(\sub_{\mc A}(1)), \]
    where $\ms D(-)$ takes the downward closed sets of a meet-semi-lattice. It is straightforward to see that the functor $\ms{D}(-)$ preserves t-conservative maps between meet-semi-lattices. This shows that if $f$ is t-conservative then so is $\psh(f)$, thus $\Cl[f]$ is op-dominant by~\cref{tconmono} again.
\end{proof}

We will use the above characterisation to show that the property of t-conservative maps being closed under cocomma is \emph{inherited by subfragment of logic}. 

\begin{lem}\label{freeprest}
    Let $\mc H_0 \subseteq \mc H_1$ be two logics. If t-conservative maps are closed under cocommas in $\alg(\ms T^{\mc H_1})$, then so is the case for $\mc H_0$. 
\end{lem}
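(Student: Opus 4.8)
The strategy is to transfer the cocomma through the forgetful $2$-functor $V:\alg(\ms T^{\mc H_0})\to\alg(\ms T^{\mc H_1})$ induced by the inclusion of doctrines $\ms T^{\mc H_1}\hookrightarrow\ms T^{\mc H_0}$ that comes from $\mc H_0\subseteq\mc H_1$ (cf.~\cref{syncat}). Two features of $V$ will be used. First, $V$ is the identity on underlying lex categories, so by \cref{tcons} a $1$-cell $f$ of $\alg(\ms T^{\mc H_0})$ is t-conservative if and only if $Vf$ is --- t-conservativity only constrains the action on subterminal lattices, which $V$ leaves untouched. Second, $V$ has a left $2$-adjoint $L$, freely completing an $\mc H_1$-theory to an $\mc H_0$-theory, and the unit $\eta:\id\nt VL$ is pointwise fully faithful; this is precisely the classical fact that the fragment $\mc H_0$ is conservative over the smaller fragment $\mc H_1$, and in particular each component of $\eta$ is t-conservative. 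Being a left adjoint, $L$ preserves all (pseudo)colimits, hence cocommas.

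Now let a cocomma square in $\alg(\ms T^{\mc H_0})$ be given, with legs $f:\mc C\to\mc A$, $g:\mc C\to\mc B$, $u:\mc A\to\mc D$, $v:\mc B\to\mc D$ and structural $2$-cell $vg\nt uf$, and suppose $g$ is t-conservative; we must show $u$ is. Form the cocomma $\mc D_1$ of $V\mc A\xleftarrow{Vf}V\mc C\xrightarrow{Vg}V\mc B$ in $\alg(\ms T^{\mc H_1})$, with legs $\tilde u:V\mc A\to\mc D_1$ and $\tilde v:V\mc B\to\mc D_1$, and let $\phi:\mc D_1\to V\mc D$ be the induced comparison, so that $\phi\tilde u=Vu$ and $\phi\tilde v=Vv$. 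Since $Vg$ is t-conservative and $\mc D_1$ is a cocomma in $\alg(\ms T^{\mc H_1})$, the hypothesis applied to this square gives that $\tilde u$ is t-conservative. It therefore suffices to prove that $\phi$ is t-conservative, for then $Vu=\phi\tilde u$ is a composite of t-conservative maps, whence $Vu$ --- and so $u$ --- is t-conservative.

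The heart of the matter, and where I expect the real work, is the t-conservativity of the comparison $\phi:\mc D_1\to V\mc D$. Conceptually this asserts that laxly amalgamating theories inside the richer fragment $\mc H_0$ validates no new closed $\mc H_1$-formula beyond those already valid in the $\mc H_1$-amalgam: a conservativity statement relative to cocommas. Following the cue of \cref{tconsopdom}, I would establish it by showing $\Cl[\phi]$ is op-dominant: transposing $\phi$ across $L\dashv V$ to $\bar\phi:L\mc D_1\to\mc D$ one has $\phi=V\bar\phi\circ\eta_{\mc D_1}$, and since $\eta_{\mc D_1}$ is already t-conservative it is enough to treat $\bar\phi$ --- the comparison, induced by the counit $LV\nt\id$, between the classifying topos of the cocomma $L\mc D_1$ of the $L$-image span and that of the cocomma $\mc D$ of the original span. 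Since $\Cl$ carries cocommas to (lax) limits, one unwinds both presentations and checks that the local operators involved do not collapse subterminals originating at the $\mc H_1$-level; the delicate point is that $\bar\phi$ is \emph{not} assembled from fully faithful maps --- the counit is not fully faithful --- so a pointwise argument is unavailable and the computation must genuinely exploit the conservativity phenomenon, now at the level of sheaf toposes. A more hands-on alternative, in the spirit of the propositional bootstrap of \cref{reducetosub1}, would instead analyse $\phi$ directly on the lattices $\sub_-(1)$, using that the free lattice-theoretic constructions computing $\sub_-(1)$ of these cocommas are \emph{positive} --- a join of generators that equals the top forces one of the generators to be the top --- and hence reflect the top element.
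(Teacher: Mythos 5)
Your argument routes everything through the forgetful $2$-functor, and this creates the problem. Setting aside a direction issue (in the paper's usage $\mc H_0\subseteq\mc H_1$ means $\ms T^{\mc H_0}$ is a sub-doctrine of $\ms T^{\mc H_1}$, so $\mc H_1$ is the \emph{richer} fragment --- coherent logic in the application --- the forgetful functor goes $\alg(\ms T^{\mc H_1})\to\alg(\ms T^{\mc H_0})$, and the lemma transfers the closure property \emph{down} to the weaker fragment; your $V$ points the other way, so you end up applying the hypothesis on the wrong side of the adjunction), the decisive issue is this: a forgetful functor is a right adjoint and does not preserve cocommas, so you are forced to introduce the comparison $\phi:\mc D_1\to V\mc D$ and to prove it t-conservative. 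You correctly flag this as ``the heart of the matter'', but you do not prove it: both proposed strategies (unwinding local operators of classifying topoi; positivity of the free lattice constructions on $\sub_-(1)$) are left as sketches, and the claim itself --- that the cocomma taken in one fragment validates no new closed formula of the other fragment beyond those forced by the cocomma of the underlying span --- is a conservativity-of-amalgamation statement of essentially the same depth as the lemma being proved. As written, the proof has a genuine gap at its central step.

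The paper avoids the comparison map entirely by using the \emph{free} functor $F:\alg(\ms T^{\mc H_0})\to\alg(\ms T^{\mc H_1})$: being a left adjoint, $F$ sends a cocomma square in $\alg(\ms T^{\mc H_0})$ to a cocomma square in $\alg(\ms T^{\mc H_1})$ on the nose, so the hypothesis applies directly to the image square. The only two facts needed are then (i) $F$ preserves t-conservativity, which holds because $\mc A$ and $F\mc A$ have the same classifying topos and t-conservativity is detected there by \cref{tconsopdom}; and (ii) the unit $\mc A\to F\mc A$ is an embedding, hence t-conservative, so that the cancellation property of \cref{tconmono} transfers the t-conservativity of $F\mc C\to F\mc D$ back to the original leg $\mc C\to\mc D$. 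If you rebuild your argument around the left adjoint rather than the right adjoint, the problematic comparison map disappears and your remaining observations (t-conservativity is detected on underlying lex categories; the unit is t-conservative) slot into place.
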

\begin{proof}
    By~\cite[Thm. 7.7]{lex} we get a free functor $F : \alg(\ms T^{\mc H_0}) \to \alg(\ms T^{\mc H_{1}})$ be the free functor. We claim that $F$ preserves t-conservative maps. Let $\mc A\in\alg(\ms T^{\mc H_0})$. We note that $\mc A$ and $F\mc A$ has the same classifying topos. This follows straightforwardly by the universal property: For any topos $\mc E$, we have the following chain of equivalences,
    \[ \Topoi(\mc E,\Cl_0[\mc A]) \simeq \alg(\ms T^{\mc H_0})(\mc A,\mc E) \simeq \alg(\ms T^{\mc H_1})(F\mc A,\mc E) \simeq \Topoi(\mc X,\Cl_1[F\mc A]). \]
    Hence the claim holds by~\cref{tconsopdom}. Now consider a cocomma square in $\alg(\ms T^{\mc H_0})$ below,
    \[\begin{tikzcd}
    	{\mc A} && {\mc C} \\
    	& {F\mc A} && {F\mc C} \\
    	{\mc B} && {\mc D} \\
    	& {F\mc B} && {F\mc D}
    	\arrow[from=1-1, to=1-3]
    	\arrow[tail, from=1-1, to=2-2]
    	\arrow[from=1-1, to=3-1]
    	\arrow[tail, from=1-3, to=2-4]
    	\arrow[""{name=0, anchor=center, inner sep=0}, from=1-3, to=3-3]
    	\arrow[from=2-2, to=2-4]
    	\arrow[from=2-2, to=4-2]
    	\arrow[""{name=1, anchor=center, inner sep=0}, from=2-4, to=4-4]
    	\arrow[""{name=2, anchor=center, inner sep=0}, from=3-1, to=3-3]
    	\arrow[tail, from=3-1, to=4-2]
    	\arrow[tail, from=3-3, to=4-4]
    	\arrow[""{name=3, anchor=center, inner sep=0}, from=4-2, to=4-4]
    	\arrow[between={0.2}{0.8}, Rightarrow, from=2, to=0]
    	\arrow[between={0.2}{0.8}, Rightarrow, from=3, to=1]
    \end{tikzcd}\]
    We may apply the left adjoint $F$, where the unit will an embedding. Since $F$ is a left adjoint, the front lax square will be a cocomma square in $\alg(\ms T^{\mc H_1})$. Now if the map $\mc A \to \mc B$ is t-conservative, then by~\cref{freeprest} so is $F\mc A \to F\mc B$. By assumption, so is $F\mc C \to F\mc D$. Thus by~\cref{tconmono}, so is the map $\mc C \to\mc D$. This shows that t-conservative maps are also closed under cocomma in $\alg(\ms T^{\mc H_0})$.
\end{proof}

\begin{rem}[Failure of t-conservative maps being closed under cocomma in geometric logic] \label{cocommatcons}
    We adopt an example mentioned in~\cite{pitts1983amalgamation}, which was attributed to P.T. Johnstone. Consider $X = \N \cup \set{\infty}$ with the following topology: $U$ is open in $X$ iff $U = \emptyset$ or it is cofinite and contains $\infty$. Now let $i : \N \inj X$ be the inclusion with $\N$ as a discrete space. Consider the following cocomma square in $\Frm$,
    \[\begin{tikzcd}
    	{\rg{X}} & {\mc P(X)} \\
    	{\mc P(\N)} & {\mc P(\N)}
    	\arrow[tail, from=1-1, to=1-2]
    	\arrow["{i^*}"', tail, from=1-1, to=2-1]
    	\arrow["{i\inv}", from=1-2, to=2-2]
    	\arrow[equals, from=2-1, to=2-2]
    \end{tikzcd}\]
    In this case this is also a pushout, since the poset of points on $X$ is discrete. Furthermore, $i^*$ is t-conservative (and f-conservative) since it is injective. However, $i\inv$ is not t-conservative (nor f-conservative). 
\end{rem}

\begin{thm}\label{interpolationofsubfragment}[Craig interpolation for (existential) finitary subgeometric logics]
    Let $\mc H_{\mr{matte}} \subseteq \mc H \subseteq \mc H_{\mr{flat}}$ be a logic between the regular and coherent fragment, and suppose $\mc H$ has absolute \'etale classifier. Then $\mc H$ has the interpolation property.
\end{thm}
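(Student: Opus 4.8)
The plan is to obtain the statement by assembling the results of \cref{sec:slicing,sec:finitary,sec:etale}, reducing interpolation for $\mc H$ in two stages. \textbf{First} I would check that $\ms T^{\mc H}$ satisfies the hypotheses of \cref{tstableinterpolation}: since $\mc H$ lies between the regular and coherent fragments — both of which are finitary — the doctrine $\ms T^{\mc H}$ is finitary; and since $\mc H$ has an absolute \'etale map classifier, \cref{etaleimpliessclicing} guarantees that $\ms T^{\mc H}$ preserves slicing. By \cref{tstableinterpolation} (equivalently, via the propositional bootstrap \cref{reducetosub1}), it then suffices to prove that t-conservative maps are closed under cocomma in $\alg(\ms T^{\mc H})$.

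\textbf{Second} I would push this closure property down from the coherent fragment. Because $\mc H\subseteq\mc H_{\mr{flat}}$, \cref{freeprest} reduces the task to showing that t-conservative maps are closed under cocomma in $\alg(\ms T^{\mc H_{\mr{flat}}})$, i.e.\ in the $2$-category of small coherent categories. Note that both halves of the hypothesis $\mc H_{\mr{matte}}\subseteq\mc H\subseteq\mc H_{\mr{flat}}$ enter here: the upper bound to invoke \cref{freeprest}, and the lower bound so that \cref{tconsopdom}, on which the proof of \cref{freeprest} rests, applies. Since the coherent fragment is again finitary and preserves slicing — slicing a coherent topos at a coherent object yields a coherent topos, so $\mc H_{\mr{flat}}$ has an \'etale classifier and \cref{etaleimpliessclicing} applies once more — a further application of \cref{tstableinterpolation} identifies this closure condition with the assertion that \emph{coherent logic itself has the interpolation property}.

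\textbf{Third} I would dispatch the coherent base case. This is Pitts' interpolation theorem for the coherent fragment \cite{pitts1983amalgamation,pitts2020interpolation}; alternatively one may use the self-contained argument recorded below as \cref{interpolationpretopoi}, whose shape is: the classifying-topos $2$-functor $\Cl$ is a relative left adjoint, hence preserves cocommas, carrying a cocomma square of coherent categories to a comma square of coherent topoi; by \cref{tconsopdom} a map of coherent categories is t-conservative exactly when the induced geometric morphism is op-dominant; so the claim becomes that op-dominant geometric morphisms between coherent topoi are stable under the comma construction. Granting this, the cocomma in $\alg(\ms T^{\mc H})$ inherits interpolation, completing the proof.

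\textbf{The main obstacle} is precisely that last point — stability of op-dominance under comma squares of coherent topoi — and this is where the coherence hypothesis is genuinely doing work and cannot be removed: \cref{cocommatcons} exhibits, via Johnstone's example, a cocomma square of frames in which t-conservativity is \emph{not} preserved, so no formal argument valid for arbitrary geometric logic can succeed. Unwinding the comma construction, the leg $\Cl u : \Cl\mc D \to \Cl\mc A$ that must be shown op-dominant is built by pullback from $\Cl g$ together with the domain and codomain projections of a cotensor $\mc C^{\mathbf 2}$ — which are split epimorphisms, hence surjections, hence op-dominant, since the inverse image of a surjection is comonadic and therefore conservative — so the residual content is pullback-stability of op-dominant geometric morphisms, which is exactly what the spectrality of the subterminal locales of coherent topoi is needed to supply.
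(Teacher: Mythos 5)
Your proposal follows essentially the same route as the paper's proof: finitariness of $\ms T^{\mc H}$ (though the paper derives this from $\ms T^{\mc H}$ being a submonad of the pretopos completion via \cite[Lem.~6.4.4]{di2025bi}, using only the upper bound $\mc H \subseteq \mc H_{\mr{flat}}$, rather than from being sandwiched between two finitary doctrines), preservation of slicing from the \'etale classifier via \cref{etaleimpliessclicing}, reduction to closure of t-conservative maps under cocomma by \cref{tstableinterpolation}, descent to the coherent fragment by \cref{freeprest} (and you are right that the lower bound $\mc H_{\mr{matte}} \subseteq \mc H$ enters through \cref{tconsopdom}), and the coherent base case from Pitts or \cref{interpolationpretopoi}. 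One caveat: your final paragraph mischaracterizes how the appendix actually handles the base case --- \cref{interpolationpretopoi} works with the \emph{dual} notions (f-conservative maps and \emph{dominant} geometric morphisms, via the dual criterion \cref{fconsinter}), establishing stability under \emph{comma} squares from the Beck--Chevalley condition for spartan morphisms (\cref{sparBC}), not pullback-stability of op-dominant morphisms; indeed the paper's closing remark explicitly notes that the op-dominant route you sketch would require lax descent and the locale-of-filters machinery rather than the classifying topos alone. Since you also offer Pitts' theorem as the citation for the base case, this does not create a gap in your argument for the main theorem.
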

\begin{proof}
    If $\mc H \subseteq \mc H_{\mr{flat}}$, then $\ms T^{\mc H} \subseteq \ms T^{\mc H_{\mr{flat}}}$ is a submonad of the pretopos completion. By~\cite[Lem. 6.4.4]{di2025bi}, $\ms T^{\mc H}$ is finitary. Thus by~\cref{tstableinterpolation}, it suffices to show t-conservative maps of $\ms T^{\mc H}$-algebras are closed under cocommas. By~\cref{freeprest}, it suffices to observe this for $\ms T^{\mc H_{\mr{flat}}}$. This follows from the fact that the doctrine of pretopoi is finitary, and has the interpolation property (see ~\cite{pitts2020interpolation} or the appendix of this paper).
\end{proof}

\begin{rem}
    The proof that the doctrine of pretopoi has the interpolation property in~\cite{pitts2020interpolation} is based on the \emph{topos of filters} construction, which is quite special to the coherent fragment. In~\cref{newintforcoh} we will provide an alternative proof based on classifying topoi of pretopoi alone.
\end{rem}

\begin{exa}[Regular logic with falsum]
    One simple example that~\cref{interpolationofsubfragment} applies to is the fragment of regular logic with falsum. Syntactically, it has all the logical operators of regular logic, plus $\bot$. As a logic, following~\cite{di2025logic} this is described by the class of geometric morphisms whose direct image preserves both the initial object and epimorphisms. This corresponds to the doctrine of exact categories with a strict initial object. It is evident that this doctrine preserves slicing, thus~\cref{interpolationofsubfragment} shows it has the interpolation property.
\end{exa}

\begin{exa}[Regular logic with duplications]\label{duplication}
    We can construct an interesting toy example as a doctrine on lex categories. From the perspective of lex-colimits~\cite{lex}, one may consider the doctrine of freely adding the lex-colimits of the coproduct of the terminal object with itself, $1+1$. Since for any lex category $\mc C$ and $X\in\mc C$, we have the following pullback in the presheaf category
    \[
    \begin{tikzcd}
        X + X \ar[d] \ar[r] & 1+1 \ar[d] \\
        X \ar[r] & 1
        \arrow["\lrcorner"{anchor=center, pos=0.125}, draw=none, from=1-1, to=2-2]
    \end{tikzcd}
    \]
    thus this doctrine also adds the coproduct of any object with itself, and from this it is easy to see this doctrine must be closed under slices. However, if we take the union of this lex-colimits with the one generating exact categories, we do not currently know whether this gives us the full doctrine of pretopoi or not.
\end{exa}

\begin{rem}[A boring example] \label{trivialexa}
    We can also consider the union of disjunctive logic with regular logic. However, this provides nothing new: as a doctrine on lex categories, if an exact category also has finite coproducts, it becomes a pretopos, thus this gives us nothing but the coherent fragment. From the perspective of logic in the sense of~\cite{di2025logic}, viz. as a class of geometric morphisms, it is also evident that a map is flat iff it is both matte and pure.
\end{rem}

\begin{rem}[How many other examples?] \label{lim1}
    How many fragments exist between regular and coherent? After \Cref{trivialexa}, the reader may feel like there aren't that many, and indeed even the examples we bring do not seem that rich or interesting. Another interesting observation is that, if we look at \cref{duplication} from the perspective of Kan injectivity in $\Topoi$, a geometric morphism is pure, i.e.\ its direct image preserves finite coproducts, iff it preserves the specific coproduct $1+1$ (see~\cite[C3.4.12]{elephant2}). Thus, it seems this example again gives us the coherent fragment. Unfortunately, we do not know the exact answer. We currently truly lack a classification of fragments of geometric logic from any perspective, algebraic or proof theoretic. The authors of the paper have already discussed the relevance of this point in \cite[7.3]{di2025logic}. We look at this paper as an additional motivation -- or even \textit{a call} -- to develop that subproject.
\end{rem}

\begin{rem}[What about subregular or non-regular logics?] \label{lim2}
The main theorem of this section (\Cref{interpolationofsubfragment}) requires our logics to sit above the expressivity of regular logic to prove Craig interpolation. Crucially, the proof relies on a technical proposition (\Cref{tconsopdom}) which needs the algebras for $\mathsf{T}^{\mathcal{H}}$ to be regular. \textit{What happens for \textit{subregular} or non-regular\footnote{The disjunctive one, for example.} fragments of geometric logic?} While \Cref{tconsopdom} is needed to reduce $t$-conservative maps to op-dominant geometric morphisms, and thus features in a key portion of the proof, the rest of our strategy is very sound. So, we do not exactly know what happens below or parallelly to regular logic, but vast portions of the technology that we have developed still applies to that context, and future research could focus on investigating how to circumvent or build around \Cref{tconsopdom} to discern Craig interpolation for such fragements of geometric logic.
\end{rem}

\begin{rem}[A modular interpolation: drilling to the minimal common fragment]
    In hindsight, our interpolation result (\cref{interpolationofsubfragment}) should be seen as a \emph{modular} development of interpolation across different fragments of logic. And even for the readers who are only working with first-order logic, our result still provides genuinely new information. The usual interpolation property for first-order logic says if we have $\varphi$ in signature $\Sigma_1$ and $\psi$ in signature $\Sigma_2$ such that $\varphi \vdash \psi$, then we can find an interpolant that belongs to the common signature $\Sigma_1 \cap \Sigma_2$. Our result improves this by showing that if $\varphi,\psi$ belongs to some \emph{subfragment}, then the interpolant can also be found in the \emph{same} subfragment. Given the examples and discussions in~\cite{di2025logic}, this in particular applies to the regular fragment, regular with a bottom element added, etc..
\end{rem}

\appendix

\section{A new proof of interpolation for coherent logic}\label{newintforcoh}

We have mentioned that it is shown in~\cite{pitts2020interpolation} that the doctrine of pretopoi has the interpolation property, though the proof there uses the topos of filter construction. In this section we show that it suffices to look at the classifying topos alone.

Recall the notion of dominant geometric morphisms:

\begin{defn}
    A geometric morphism $f : \mc X \to \mc Y$ is \emph{dominant} if $f_*$ reflect 0, i.e.\ $f^*$ is f-conservative. Equivalently, $f_*$ preserves 0.
\end{defn}

\begin{rem}\label{properdominantsurj}
    Surjectivity always implies being dominant. The other direction holds when $f$ is \emph{proper} (cf.~\cite{moerdijk2000proper}).
\end{rem}

\begin{defn}
    A geometric morphism is \emph{spartan}\footnote{The terminology is due to~\cite{diliberti2022geometry};~\cite{moerdijk2000proper} call such maps \emph{relative tidy}.} if its direct image preserves filtered colimits.
\end{defn}

\begin{prop}\label{sparBC}
    Let below be a comma square in $\Topoi$ where $f$ is spartan. If $f$ is dominant, then so is $v$ (in fact $v$ will be a surjection).
    \[\begin{tikzcd}
    	{\mc W} & {\mc X} \\
    	{\mc Y} & {\mc Z}
    	\arrow["u", from=1-1, to=1-2]
    	\arrow["v"', from=1-1, to=2-1]
    	\arrow[""{name=0, anchor=center, inner sep=0}, "f", from=1-2, to=2-2]
    	\arrow[""{name=1, anchor=center, inner sep=0}, "g"', from=2-1, to=2-2]
    	\arrow[curve={height=-6pt}, between={0.2}{0.8}, Rightarrow, from=1, to=0]
    \end{tikzcd}\]
\end{prop}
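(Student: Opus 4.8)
The plan is to reduce the statement to the special case where $f$ is a \emph{surjection}, and then to settle that case by a short $2$-categorical argument exhibiting $v$ as a map through which a genuine pullback of $f$ factors.

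For the reduction, observe first that a spartan geometric morphism --- i.e.\ one whose direct image preserves all filtered colimits (``relatively tidy'') --- is in particular \emph{proper} (cf.~\cite{moerdijk2000proper}). Since $f$ is moreover dominant, \cref{properdominantsurj} forces $f$ to be a surjection. Hence it suffices to establish the following purely formal fact, which is the only step where the hypothesis ``spartan'' is used: \emph{in any comma square as in the statement, if $f$ is a surjection then $v$ is a surjection.} To prove it, form the (pseudo-)pullback $P := \mc X \times_{\mc Z} \mc Y$ of $f$ along $g$, with projections $p_{\mc X} \colon P \to \mc X$, $p_{\mc Y} \colon P \to \mc Y$ and canonical invertible $2$-cell $g\,p_{\mc Y} \cong f\,p_{\mc X}$. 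Since surjections of Grothendieck toposes are stable under pullback, $p_{\mc Y}$ is a surjection. Regarding the invertible $2$-cell as an ordinary $2$-cell $g\,p_{\mc Y} \Rightarrow f\,p_{\mc X}$, the universal property of the comma object $\mc W$ produces a comparison morphism $j \colon P \to \mc W$ with $u j \simeq p_{\mc X}$ and $v j \simeq p_{\mc Y}$. Finally, surjections are left-cancellable in $\Topoi$: if $v j$ is a surjection then $(v j)^{*} = j^{*} v^{*}$ is faithful, so $v^{*}$ is faithful and $v$ is a surjection. As $v j \simeq p_{\mc Y}$ is a surjection, $v$ is a surjection --- in particular dominant --- which is the claim.

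I do not anticipate a single hard obstacle; the proof is essentially the chain of observations above. The points that demand care are: tracking the direction of the structural $2$-cell so that the comparison $j$ genuinely satisfies $v j \simeq p_{\mc Y}$ (and not a twisted variant); correctly invoking ``spartan $\Rightarrow$ proper'' so that \cref{properdominantsurj} applies; and the two standard but load-bearing facts about surjective geometric morphisms invoked above, namely their stability under pullback and their left cancellation.
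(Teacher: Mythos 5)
Your overall strategy is sound and genuinely different from the paper's: where the paper deduces dominance of $v$ directly from the Beck--Chevalley isomorphism $g^*f_* \cong v_*u^*$ supplied by \cite{moerdijk2000proper} for tidy (spartan) morphisms, applied to the object $0$, you instead upgrade $f$ to a surjection via properness and then transport surjectivity across the comparison map from the pullback to the comma object. The reduction steps (spartan $\Rightarrow$ tidy $\Rightarrow$ proper, then \cref{properdominantsurj}), the existence of the comparison $j \colon \mc X \times_{\mc Z}\mc Y \to \mc W$ with $vj \simeq p_{\mc Y}$, and the left-cancellability of surjections are all correct.

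However, there is one genuine gap: the assertion that \emph{arbitrary} surjections of toposes are stable under pullback is false, so your ``purely formal fact'' is not purely formal. A counterexample: the localic morphism $\sh(\mbb R_{\mathrm{disc}}) \to \sh(\mbb R)$ induced by the identity on points is a surjection (sheaves on $\mbb R$ are separated by their stalks), but its pullback along the inclusion of the smallest dense sublocale $\mbb R_{\neg\neg} \hookrightarrow \mbb R$ is $\coprod_{x}\bigl(\{x\} \wedge \mbb R_{\neg\neg}\bigr) \simeq 0$, since each singleton is disjoint from every dense sublocale; the degenerate topos is not surjective over $\sh(\mbb R_{\neg\neg})$. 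This failure is precisely why the descent literature restricts to \emph{open} or \emph{proper} surjections, which \emph{are} pullback-stable. Fortunately the repair is immediate and uses only hypotheses you have already extracted: your $f$ is proper (being spartan) and dominant, hence a \emph{proper} surjection by \cref{properdominantsurj}, and proper surjections are stable under pullback by \cite{moerdijk2000proper}; so $p_{\mc Y}$ is a surjection and the rest of your argument goes through. Note that this means the spartan hypothesis enters twice, not once as you claim: both to make $f$ a surjection and to make that surjection pullback-stable.
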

\begin{proof}
    By~\cite{moerdijk2000proper}, when $f$ is spartan, then $v$ is tidy, and this square satisfies the Beck-Chevalley condition $g^*f_* \cong v_*u^*.$
    Thus, if $f$ is dominant, then $f_*0 \cong 0$, and we have, 
    \[ v_*0 \cong v^*u^*0 \cong g^*f_*0 \cong 0. \]
    This implies that $v_*$ is dominant. In fact, since $v$ is tidy, by~\cref{properdominantsurj} it is a surjection.
\end{proof}

The above result can be used to show pretopoi have the interpolation property. The trick is to observe that there is a certain duality between the limit and colimit structure that makes the following dual form of~\cref{tstableinterpolation} holds for pretopoi:

\begin{prop}\label{fconsinter}
    If f-conservative maps are \emph{op-closed} under cocommas in $\DL$, then every cocomma square has interpolation.
\end{prop}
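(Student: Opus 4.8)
The plan is to carry out the dual of the ``if'' direction of \cref{tstableinterpolation}: where that argument forces subterminal objects to be $\top$ by passing to slices and keeps track of t-conservativity, here one forces elements of a distributive lattice to be $0$ by passing to quotients by principal ideals and keeps track of f-conservativity.

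First I set up the relevant quotients in $\DL$. For $L\in\DL$ and an ideal $I\subseteq L$, let $L/I$ denote the quotient of $L$ by $I$; in particular $L/{\downarrow}x$ sends $x$ to $0$. The key fact is that these behave like colimits: for any $h:L\to M$ in $\DL$ and ideal $I\subseteq L$, the square on $L$, $M$, $L/I$, $M/\langle h(I)\rangle$ (with $\langle h(I)\rangle$ the ideal generated by $h(I)$) is a pushout in $\DL$, since a $\DL$-morphism out of $M/\langle h(I)\rangle$ is exactly a $\DL$-morphism out of $M$ killing $h(I)$, i.e.\ one whose restriction along $h$ kills $I$. I also record the f-conservative counterpart (cf.\ \cref{factorisation}) of forcing a proposition to be false: for $a\in A$ the map $C/f\inv({\downarrow}a)\to A/{\downarrow}a$ is f-conservative, because $[c]=0$ in the target means $fc\le a$, i.e.\ $[c]=0$ already in the source.

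Now fix a cocomma square $(f,g,u,v)$ in $\DL$ with $2$-cell $vg\le uf$, and suppose $b\in B$ and $a\in A$ satisfy $vb\le ua$ in $D$. I build the cube whose back face is this cocomma and whose vertical edges are ideal-quotients at $a$: on the $C$--$A$ edge the quotients by $f\inv({\downarrow}a)$ and by ${\downarrow}a$, and on the $B$--$D$ edge quotients by ideals chosen so that the two side faces of the cube commute. By the pushout fact above the top and bottom faces are pushouts, so by pasting the front face is again a cocomma in $\DL$; its top edge $C/f\inv({\downarrow}a)\to A/{\downarrow}a$ is f-conservative by the previous paragraph. Applying the hypothesis---that f-conservative maps are op-closed under cocommas---to the front cocomma, its right edge $A/{\downarrow}a\to D/(\cdots)$ is f-conservative as well. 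Since $ua$ becomes $0$ in $D/(\cdots)$, the inequality $vb\le ua$ forces $vb=0$ there; chasing this back along the f-conservative right edge and unwinding the ideal generated by $f(f\inv({\downarrow}a))$ produces finitely many $c_i\in C$ with $b\le gc_i$ and $\bigwedge_i fc_i\le a$, so $c:=\bigwedge_i c_i$ satisfies $b\le gc$ and $fc\le a$, giving the interpolant.

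The delicate point is the cube's bookkeeping: the ideal-quotients on the $B$ and $D$ corners must be chosen so that simultaneously the bottom face is a pushout and the two side faces commute, and this interacts with the orientation of the $2$-cell $vg\le uf$---precisely the place where, in \cref{tstableinterpolation}, the ``correct'' orientation made the analogous pasting automatic. If this cannot be arranged directly, there is a self-contained fallback: were no interpolant to exist, the filter $\{c:b\le gc\}$ and the ideal $\{c:fc\le a\}$ of $C$ would be disjoint, hence separated by a prime filter $P$ of $C$ (prime ideal theorem for distributive lattices); $P$ extends across $f$ to a prime filter of $A$ missing $a$, and then across $g$ to a prime filter of $B$ containing $b$ (each extension legitimate by carrying the disjointness along), which yields $\DL$-morphisms $A\to\mathbf 2$ and $B\to\mathbf 2$ assembling into a lax cocone under $(f,g)$; the induced $\DL$-morphism $D\to\mathbf 2$ then separates $vb$ from $ua$, contradicting $vb\le ua$. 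On this second route the op-closure hypothesis is not literally used, which suggests it appears in the statement chiefly to mirror \cref{tstableinterpolation} and to be discharged---for pretopoi rather than for $\DL$---by means of \cref{sparBC}.
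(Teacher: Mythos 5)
The paper gives no written proof of this statement; it is dispatched with the remark that the argument is ``essentially the same as \cref{tstableinterpolation}'', dualised from filters to ideals. Your main route attempts exactly that dualisation, and the bookkeeping you flag as delicate does in fact go wrong in three identifiable places. First, the face on $C$, $A$, $C/f\inv(\cv a)$, $A/{\cv a}$ of your cube is \emph{not} a pushout: the pushout of $C\to C/f\inv(\cv a)$ along $f$ is $A/\langle f(f\inv(\cv a))\rangle$, and $\langle f(f\inv(\cv a))\rangle=\cv a$ only when $a$ lies in the image of $f$. In the correct mirror of \cref{tstableinterpolation} the roles of the faces swap: the pushout faces are the $g$-face ($B$ quotiented by $\langle g(f\inv(\cv a))\rangle$) and the $u$-face ($D$ quotiented by ${\cv{ua}}$), while the $f$-face and the $v$-face merely commute --- the latter via $vgc\le ufc\le ua$, exactly as the $u$-face does in \cref{tstableinterpolation}. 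Second, op-closure must carry f-conservativity from the top edge $C/f\inv(\cv a)\to A/{\cv a}$ to the \emph{bottom} edge $B/\langle g(f\inv(\cv a))\rangle\to D/{\cv{ua}}$ (this is how it is used in \cref{interpolationpretopoi}: $f$ f-conservative implies $v$ f-conservative); your version deduces f-conservativity of the \emph{right} edge $A/{\cv a}\to D/(\cdots)$ and then chases $vb=0$ back along it, which is vacuous since $vb$ need not lie in the image of $u$ --- it is the bottom edge that returns information about $b$. Third, the extraction is the filter-side formula transplanted: $b=0$ in $B/\langle g(f\inv(\cv a))\rangle$ yields $b\le gc_1\vee\cdots\vee gc_n$ with each $fc_i\le a$, and the interpolant is the \emph{join} $c=\bigvee_i c_i$, not a meet.

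Your fallback, by contrast, is a correct self-contained argument: the separations do go through (one checks $\langle f(P)\rangle\cap{\cv a}=\emptyset$, and that ${\dv b}$ is disjoint from the down-closure of $g(C\setminus f\inv(Q))$ using primality of $Q$), the two characters into $\mathbf 2$ form a lax cocone of the orientation $\chi_Rg\le\chi_Qf$ matching the cocomma's $2$-cell, and the induced map $D\to\mathbf 2$ gives the contradiction. This proves the statement as literally written --- and shows the op-closure hypothesis is superfluous for $\DL$. But it is not the paper's argument, and it cannot play the role the proposition plays in \cref{interpolationpretopoi}: there the hypothesis is verified for \emph{pretopoi} via \cref{sparBC}, and the conclusion needed is interpolation for cocommas of pretopoi, which \cref{reducetosub1} reduces only to lax squares of distributive lattices that are not themselves cocommas in $\DL$, out of reach of the prime-filter route. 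So the ``$\DL$'' in the statement is best read as a slip for the doctrine at hand, and the proof the paper intends is the corrected cube above.
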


We will not explicitly write down a proof here since it is essentially the same as~\cref{tstableinterpolation}; also cf.~\cite[Rem. 1.11]{pitts2020interpolation}. We observe that f-conservative maps of pretopoi corresponds exactly to dominant geometric morphisms via the classifying topos construction:

\begin{lem}\label{fconsCL}
    For a map $f : \mc D \to \mc E$ of pretopoi, $f$ is f-conservative iff the corresponding geometric morphism $\Cl[f] : \Cl[\mc E] \to \Cl[\mc D]$ is dominant.
\end{lem}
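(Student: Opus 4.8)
The plan is to mimic the structure of the proof of~\cref{tconsopdom}, replacing t-conservativity with f-conservativity and op-dominance with dominance, exploiting the fact that now we are working with the full doctrine of pretopoi, so that the classifying topos construction lands inside $\Topoi$ with the free pretopos completion playing the role of the embedding $\mc A \hook \psh(\mc A)$ used before. Concretely, I would first recall the commuting diagram
\[\begin{tikzcd}
    \mc D & {\Cl[\mc D]} \\
    \mc E & {\Cl[\mc E]}
    \arrow[tail, from=1-1, to=1-2]
    \arrow["f"', from=1-1, to=2-1]
    \arrow["{\Cl[f]^*}", from=1-2, to=2-2]
    \arrow[tail, from=2-1, to=2-2]
\end{tikzcd}\]
where the horizontal arrows are the canonical embeddings of a pretopos into (the underlying category of) its classifying topos. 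The key point is that for a pretopos $\mc D$, this embedding identifies $\mc D$ with the coherent objects of $\Cl[\mc D]$, and in particular it is \emph{conservative on subterminal objects}: the initial object of $\mc D$ is sent to the initial object of $\Cl[\mc D]$, and a subterminal $u$ in $\mc D$ is $0$ iff its image in $\Cl[\mc D]$ is $0$. I would record this as the analogue of the isomorphism $\sub_{\psh(\mc A)}(1) \cong \ms D(\sub_{\mc A}(1))$ used in~\cref{tconsopdom}; here since $\mc D$ is a pretopos it already has a strict initial object, so the subterminal structure is even better behaved.

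From this, one direction is immediate by the two-out-of-three style argument: if $\Cl[f]^*$ is f-conservative, then since $\mc D \hook \Cl[\mc D]$ reflects $0$ (it preserves and reflects the initial object among subterminals), and $f$ composed with the right vertical embedding equals the top embedding composed with $\Cl[f]^*$, the map $f$ must be f-conservative — this is the f-analogue of~\cref{tconmono}. For the converse, suppose $f : \mc D \to \mc E$ is f-conservative. I want to conclude $\Cl[f]^*$ is f-conservative. Take a subterminal $v$ in $\Cl[\mc D]$ with $\Cl[f]^*v = 0$ in $\Cl[\mc E]$; I would use that $\Cl[\mc D]$ is the classifying topos of the pretopos $\mc D$, so every object, and in particular every subterminal, is a filtered colimit of coherent objects, i.e.\ of images of subterminals of $\mc D$. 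Since $\Cl[f]^*$ is an inverse image functor it preserves these filtered colimits and preserves $0$; combined with f-conservativity of $f$ on the coherent pieces, a short diagram chase forces $v = 0$. This is exactly parallel to the step in~\cref{tconsopdom} where one observes that $\ms D(-)$ preserves t-conservative maps, now in the guise that the filtered-colimit completion preserves f-conservative maps.

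The main obstacle I anticipate is making precise and correct the claim that every subterminal of $\Cl[\mc D]$ is a filtered colimit of (images of) subterminals of $\mc D$, and that $\Cl[f]^*$ interacts with this presentation well enough to reflect $0$ — in other words, controlling the subterminal lattice $\sub_{\Cl[\mc D]}(1)$ in terms of $\sub_{\mc D}(1)$, the exact analogue of the computation $\sub_{\psh(\mc A)}(1)\cong\ms D(\sub_{\mc A}(1))$ but now for the classifying topos of a pretopos rather than a presheaf topos. Once that description is in hand the rest is a routine application of~\cref{tconmono} (f-version) in both directions. I would also double-check that ``dominant'' as defined (namely $f_*$ preserves $0$, equivalently $f^*$ is f-conservative) is literally the same as ``$\Cl[f]^*$ is f-conservative'', which it is by definition, so no further translation is needed there.
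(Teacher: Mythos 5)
Your argument is correct, but it follows a genuinely different route from the paper's. The paper uses the other half of the definition of dominance --- that $\Cl[f]_*$ preserves $0$ --- and simply computes: the direct image $\Cl[f]_*$ is precomposition with $f$, and the initial object of $\Cl[\mc E]$ is the representable $\mc E(-,0)$ (since $0$ is strict initial in a pretopos and is covered by the empty family in the coherent topology), so $\Cl[f]_*(0)(d)\cong\mc E(fd,0)$, which is the initial sheaf on $\mc D$ iff $f$ reflects $0$. That is a one-line verification needing no analysis of $\sub_{\Cl[\mc D]}(1)$. You instead work with $\Cl[f]^*$ and mirror the structure of \cref{tconsopdom}: the easy direction is the f-analogue of \cref{tconmono}, and the converse rests on the claim that every subterminal of $\Cl[\mc D]$ is a directed join of coherent subterminals, i.e.\ $\sub_{\Cl[\mc D]}(1)\cong\Idl(\sub_{\mc D}(1))$; given that, if $\Cl[f]^*v=0$ then each coherent piece $u\le v$ has $\Cl[f]^*u$ a subobject of $0$, hence $fu\cong 0$, hence $u\cong 0$ by f-conservativity, hence $v=0$. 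This works, and it has the virtue of being exactly parallel to the earlier propositional-bootstrap argument (making the analogy $\Idl(-)$ versus $\ms D(-)$ explicit), but it carries the extra burden of justifying the ideal-completion description of the subterminal frame of the classifying topos --- the one step you yourself flag as the main obstacle --- whereas the paper's direct-image computation sidesteps that entirely. You should also note, as a small point of hygiene, that your argument uses only the subterminal form of f-conservativity while the paper's computation ranges over all objects $d$; for morphisms of pretopoi these coincide via supports, so there is no real discrepancy.
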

\begin{proof}
    Recall that the direct image $\Cl[f]_*$ is given by precomposition with $f$. Thus, for any $d\in\mc D$, we have
    \[ \Cl[f]_*(0)(d) \cong \mc E(fd,0), \]
    which is empty iff $fd$ is non-trivial. It follows that $\Cl[f]_*(0) \cong 0$ iff $fd \cong 0$ implies $d \cong 0$, i.e.\ $f$ is f-conservative.
\end{proof}

\begin{thm}\label{interpolationpretopoi}
    The doctrine of pretopoi has the interpolation property.
\end{thm}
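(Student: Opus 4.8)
The plan is to deduce the statement from the dual bootstrap \cref{fconsinter} by verifying its hypothesis on the level of classifying topoi. Write $\ms T$ for the doctrine of pretopoi. First I would record the two structural inputs already available: $\ms T$ is finitary (this is the fact invoked in \cref{interpolationofsubfragment}, see \cite[Lem. 6.4.4]{di2025bi}), and it preserves slicing, since the slice of a pretopos along any object is again a pretopos carrying the mapping-out universal property of \cref{logicslice}. Consequently both \cref{reducetosub1} and its order-dual \cref{fconsinter} apply, and it suffices to prove that \emph{f-conservative maps are op-closed under cocommas} in $\alg(\ms T)$: in any cocomma square, if the hypothesised leg out of the shared vertex is f-conservative, then the corresponding leg into the cocomma vertex is again f-conservative.

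So I would fix such a cocomma square and apply the classifying-topos functor $\Cl$. Its defining universal property $\Topoi(\mc E,\Cl[\mc A]) \simeq \alg(\ms T)(\mc A,\mc E)$ exhibits $\Cl$ as a relative \emph{left} adjoint into $\Topoi\op$, hence colimit-preserving into $\Topoi\op$; thus the cocomma square is carried to a comma square in $\Topoi$ whose two projections are the images of the two legs into the cocomma vertex, sitting over the cospan formed by the images of the two legs out of the shared vertex. By \cref{fconsCL}, f-conservativity of a cospan leg translates into \emph{dominance} of the corresponding geometric morphism. Moreover that morphism is \emph{spartan}: by the description used in the proof of \cref{fconsCL} its direct image is precomposition with the original pretopos map, and precomposition preserves filtered colimits because colimits of the relevant (pre)sheaves are computed pointwise. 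Therefore \cref{sparBC} applies and forces the opposite projection to be a surjection, in particular dominant; invoking \cref{fconsCL} once more, the associated leg into the cocomma vertex is f-conservative. This gives op-closure, and \cref{fconsinter} then delivers interpolation.

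The hard part will be bookkeeping the Beck--Chevalley transport so that it lands on the \emph{correct} leg. \cref{sparBC} carries dominance of one vertical leg of the comma square to the \emph{parallel} vertical leg through the tidy Beck--Chevalley isomorphism $g^*f_* \cong v_*u^*$, and I must check that, after tracking the direction of the cocomma $2$-cell across $\Cl$, the leg whose f-conservativity is assumed and the leg whose f-conservativity is concluded are matched exactly as the op-closure in \cref{fconsinter} requires --- using, where convenient, the symmetry of a cocomma in its two input legs. The remaining point to pin down is the spartan hypothesis: the precomposition description of $\Cl[f]_*$ yields preservation of filtered colimits directly, but one should confirm this is compatible with the coherent sheaf presentation of the classifying topos, i.e.\ that the coherent geometric morphisms between classifying topoi of pretopoi are tidy in the sense of \cite{moerdijk2000proper}.
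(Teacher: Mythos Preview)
Your proposal is correct and follows essentially the same route as the paper: reduce to op-closure of f-conservative maps under cocommas via \cref{fconsinter}, push the cocomma square through $\Cl$ to a comma square in $\Topoi$, translate f-conservativity to dominance via \cref{fconsCL}, and then apply \cref{sparBC}. The only difference is that the paper dispatches the spartan hypothesis by citing \cite{SGA4} for the tidiness of coherent geometric morphisms induced by pretopos maps, whereas you argue it via precomposition preserving filtered colimits and (rightly) flag the passage from presheaves to coherent sheaves as the point needing care.
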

\begin{proof}
   By~\cref{fconsinter}, it suffices to show f-conservative maps are op-closed under cocommas. Suppose we have a cocomma square with $f$ f-conservative,
    \[\begin{tikzcd}
    	{\mc C} & {\mc A} \\
    	{\mc B} & {\mc D}
    	\arrow["f", from=1-1, to=1-2]
    	\arrow["g"', from=1-1, to=2-1]
    	\arrow[""{name=0, anchor=center, inner sep=0}, "u", from=1-2, to=2-2]
    	\arrow[""{name=1, anchor=center, inner sep=0}, "v"', from=2-1, to=2-2]
    	\arrow[curve={height=-6pt}, between={0.2}{0.8}, Rightarrow, from=1, to=0]
    \end{tikzcd}\]
    Then since the functor $\Cl[-]$ is a left adjoint (cf.~\cite{di2025logic}), it takes a cocomma square to a comma square in $\Topoi$ below,
    \[\begin{tikzcd}
    	{\Cl[\mc D]} & {\Cl[\mc A]} \\
    	{\Cl[\mc B]} & {\Cl[\mc D]}
    	\arrow["{{\Cl[u]}}", from=1-1, to=1-2]
    	\arrow["{{\Cl[v]}}"', from=1-1, to=2-1]
    	\arrow[""{name=0, anchor=center, inner sep=0}, "{{\Cl[f]}}", from=1-2, to=2-2]
    	\arrow[""{name=1, anchor=center, inner sep=0}, "{{\Cl[g]}}"', from=2-1, to=2-2]
    	\arrow[curve={height=-6pt}, between={0.2}{0.8}, Rightarrow, from=1, to=0]
    \end{tikzcd}\]
    By~\cref{fconsCL}, $\Cl[f]$ will be dominant. It is well-known that the geometric morphisms between coherent topoi induced by maps of pretopoi are spartan (cf.~\cite{SGA4}). Thus by~\cref{sparBC}, $\Cl[v]$ will also be dominant, thus $v$ will be f-conservative by~\cref{fconsCL} again. 
\end{proof}

\begin{rem}
    In fact, one can also directly show that t-conservative maps are closed under cocomma in pretopoi by using the result about lax descent for essential geometric morphisms and \emph{op-dominant} geometric morphisms, i.e.\ those whose inverse images are t-conservative. However, in this case by considering the classifying frame construction is not enough, since in general $\Cl[f]$ for a map $f$ in $\DL$ will not be essential. Instead one can use the locale of filters construction described in~\cite{pitts1983amalgamation}. This strategy is used e.g. in~\cite{pitts2020interpolation} to show the interpolation result for pretopoi. 
\end{rem}

\bibliography{thebib}
\bibliographystyle{alpha}

\end{document}